\newcommand{\ps}{\psi(\xi,t)}
\newcommand{\lan}{\langle x \rangle}
\newcommand{\lanx}{\langle x \rangle}
\newcommand{\z}{\mathcal Z}
\newcommand{\R}{\mathbb R}
\newcommand{\D}{\mathcal{D}_\xi^\gamma}
\newcommand{\p}{\partial}
\newcommand{\e}{e^{-it\xi|\xi|-t|\xi|^{1+a}}}
\newcommand{\sgn}{\text{sgn}}
\newcommand{\ha}{\hat{\phi}}
\newcommand{\ste} {\mathcal{D}_{\xi}^\theta}
\newcommand{\ta}{t^{-\frac{a}{1+a}}}
\newcommand{\h} {\mathcal H}
\newtheorem{theorem}{Theorem}[section]
\newtheorem{proposition}[theorem]{Proposition}
\newtheorem{remark}[theorem]{Remark}
\newtheorem{lemma}[theorem]{Lemma}
\newtheorem{claim}[theorem]{Claim}
\begin{document}
\vglue-1cm \hskip1cm
\title[The dissipative Benjamin-Ono equation]{The Cauchy Problem for Dissipative Benjamin-Ono equation in Weighted Sobolev spaces}

\author[A. Cunha]{Alysson Cunha}
\address{IME-Universidade Federal de Goi\'as (UFG), 131, 74001-970, Goi\^an\-ia-GO, Bra\-zil}
\email{alysson@ufg.br}

\begin{abstract}

We study the well-posedness in weighted Sobolev spaces, for the initial value problem (IVP) associated with the dissipative Benjamin-Ono (dBO) equation. We establish persistence properties of the solution flow in the weighted Sobolev spaces $\z_{s,r}=H^s(\R)\cap L^2(|x|^{2r}dx)$, $s\geq r>0$. We also prove some unique continuation properties in these spaces. In particular, such results of unique continuation show that our results of well posedness are sharp.
\end{abstract}

\maketitle

\section{Introduction}\label{introduction}

This work is concerned with the initial-value problem (IVP) associated with the dissipative-Benjamin-Ono (dBO) equation
\begin{equation}\label{bodiss}
\begin{cases}
u_{t}+\mathcal{H}\partial_{x}^{2}u+D^{\alpha}u+uu_{x}=0, \;x\in\R, \;t>0, \\
u(x,0)=\phi(x),
\end{cases}
\end{equation}
where $0\leq \alpha\leq 2$, $D^s$ denotes the fractional derivative of order $s$ defined, via Fourier transform as $$D^sf(x)=(|\xi|^s\widehat{f})^\vee(x),$$
and $\h$ is the Hilbert transform defined by
\begin{equation}
\begin{split}
\h f(x)&=\mathrm{p.v.}\frac{1}{\pi}\int_{\R}\frac{f(y)}{x-y}dy\\
                         &=\mathcal F^{-1}(-i\sgn(\xi)\hat{f}(\xi))(x).
\end{split}
\end{equation}

We see that \eqref{bodiss} represents, for $\alpha=0$, the well-known Benjamin-Ono (BO) equation, that was deduced by Benjamin \cite{Benjamin} and later by Ono \cite{Ono} as a model for long internal gravity waves in deep stratified fluids. The BO equation has been extensively studied, in the last years, with respect to regularity in Sobolev spaces. In this sense, issues about locally or globally well-posedness (LWP and GWP, resp.) are addressed. In general, the main goal is to find the minimal regularity in Sobolev spaces, see \cite{BP}, \cite{tao} and \cite{IK}. Moreover, unique continuation principles are often investigated. For more details see, \cite{Iorio}, \cite{Iorio2} and \cite{ponce}.
Several papers on the study of local-well posedness or global-well posedness in weighted Sobolev has been devoted to BO equation, see \cite{Iorio1}, \cite{ponce}, \cite{FLP} and \cite{flores}. Solutions without infinity decay in the initial data can be found in \cite{iorio3} and \cite{fonseca}. Two-dimensional versions are also of great interest in the literature, see for example \cite{Aniura1}, \cite{APlow}, \cite{EP1}, \cite{AP}, \cite{bustamante}, \cite{EP3} and \cite{ribaud}. Recently, a higher order versions of the BO equation were studied, see \cite{riano}, \cite{riano2}, \cite{schippa}, and references therein. Kenig, Ponce and Vega \cite{kpv1} obtained results of uniqueness solutions for the BO equation, see also \cite{FLP}. 
 
For $\alpha=2$, the (dBO) becomes the Benjamin-Ono-Burgers (BOB) equation
\begin{equation}
\p_t u +\h \p_x^2 u-\p_x^2 u +uu_x=0.
\end{equation}
This equation was derived by Edwin and Roberts in \cite{Edwin}. 
In \cite{Otani1} Otani obtained global-well posedness in $H^s(\R)$, where $s>-1/2$. After that, Vento \cite{vento} showed this index is critical in the sense that the flow map $\phi \mapsto u$ is not of class $C^3$ from $H^s(\R)$ to $H^s(\R)$, for $s<-1/2$. 
By and large, issues such as well-posedness and asymptotic behavior of solutions, for the (BOB) equation, has been widely studied in the last years, see \cite{Bona}, \cite{Dix1}, \cite{Guo1}, \cite{Molinet1}, \cite{Zhang}, and references therein. 

The well-posedness for the dBO equation was first examined by Vento \cite{vento}. More precisely he obtained, in the case $1<\alpha\leq 2$, global well-posedness in $H^s(\R)$, where $s>-\alpha/4$, and Ill-posedness (holds when $\alpha=1$), for $s<-\alpha/4$, in the sense that the mapping data-solution is not $C^3$ in a neighborhood of the origin. If  $0\leq \alpha< 1$, he also obtained the Ill-posedness in $H^s(\R)$, for all $s\in \R$, in the sense that the mapping data-solution is not $C^2$ at origin. 
On the other hand, about the long-time behavior of the solutions, we can also add that the following perturbation of the IVP \eqref{bodiss}
\begin{equation}\label{bodissf}
\begin{cases}
u_{t}+\mathcal{H}\partial_{x}^{2}u+D^{\alpha}u+uu_{x}=f, \;x\in\R, \;t>0, \\
u(x,0)=\phi(x),
\end{cases}
\end{equation}
where $f\in L^2((1+x^2)^{1/2}dx)$, has a global attractor with finite dimension, in the sense of Hausdorff, see \cite{attAlarcon1}.

As the usual, we are considering the well-posedness in the Kato's sense, that is, includes, existence, uniqueness, persistence property and smoothness of the map data-solution. In this paper, we are mainly interested in to study the global well-posedness of the IVP \eqref{bodiss} in weighted Sobolev spaces.

Our main results are the following:

\begin{theorem} Let $a\in (0,1]$, then the following statements are true.

\begin{itemize}\label{well}
\item [i)] Let $s\geq r>0$ and $r<3/2+a$. Then, the IVP \eqref{bodiss} is GWP in $\mathcal{Z}_{s,r}$.
\item [ii)] Let $r\in [3/2+a,5/2+a)$ and $r\leq s$. Then, the IVP \eqref{bodiss} is GWP in $\dot{\mathcal{Z}}_{s,r}$.
\end{itemize}

\end{theorem}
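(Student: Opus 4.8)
The plan is to solve the Duhamel formulation
\[
u(t)=S(t)\phi-\int_0^t S(t-t')\,\p_x\!\big(\tfrac12 u^2(t')\big)\,dt',
\]
where $S(t)$ is the semigroup with Fourier symbol $\e$ (so $\alpha=1+a$), by a contraction argument in $C([0,T];\z_{s,r})$ for part (i) and in $C([0,T];\dot{\z}_{s,r})$ for part (ii), and then to pass from local to global well-posedness. The globalization is essentially free once the local time of existence is seen to depend only on $\|\phi\|_{H^s}$: by Vento's global well-posedness in $H^s$, $s>-\alpha/4$ \cite{vento} — applicable here since $\alpha=1+a\in(1,2]$ and $s\ge r>0$ — the $H^s$-norm of the solution stays bounded on $[0,\infty)$, so the local result can be reapplied indefinitely, the weighted component of the norm being transported by a Gronwall-type inequality and hence finite on every $[0,T]$. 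Since $|\psi(\xi,t)|\le\emod\le 1$, the $H^s$ estimates for $S(t)$ are immediate, and the parabolic smoothing $|\xi|\,\emod\lesssim \te$ recovers the derivative in $\p_x(u^2)$ against the integrable singularity $\int_0^t(t-t')^{-1/(1+a)}\,dt'<\infty$ (here $1/(1+a)<1$); one may moreover use the gain $\|S(t)f\|_{H^\sigma}\lesssim t^{-(\sigma-s)/(1+a)}\|f\|_{H^s}$ to reduce, if needed, to the case $s>1/2$ so that $H^s\hookrightarrow L^\infty$. Thus the only genuine work lies in the weighted part of the norm.

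By the characterization of $\z_{s,r}$ and the interpolation inequality for weighted $L^2$ norms, controlling $\||x|^r f\|_{L^2}$ amounts to controlling $\|\mathcal{D}_\xi^{r}\widehat f\|_{L^2}$ and $\|\widehat f\|_{L^2}$. For the linear term this means estimating $\mathcal{D}_\xi^{r}\big(\psi(\xi,t)\widehat\phi(\xi)\big)$. Write $\psi=\psi_1\psi_2$ with $\psi_1=e^{-it\xi|\xi|}$ unimodular and $\psi_2=\emod$, and distribute $\mathcal{D}_\xi^{r}$ by a fractional Leibniz rule of Kenig--Ponce--Vega type. Terms in which the derivatives act only on $\psi_1$ generate harmless factors $t^k|\xi|^m\psi_1$ together with the $\sgn(\xi)$-type singularity of $\p_\xi^2(\xi|\xi|)$, which is locally $L^2$ near $\xi=0$ provided $r<5/2$ — implied by $r<3/2+a\le 5/2$. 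The decisive term is the one in which $\mathcal{D}_\xi^{r}$ falls on the factor $|\xi|^{1+a}$ in the exponent of $\psi_2$: near the origin it behaves like $t\,|\xi|^{1+a-r}\widehat\phi(\xi)$, which is in $L^2$ near $\xi=0$ exactly when $r<3/2+a$, giving part (i) with no compatibility condition. When $r\ge 3/2+a$ this term fails to be integrable unless $\widehat\phi(0)=0$; in that case $\widehat\phi(\xi)=O(|\xi|)$ near $0$ (note that $|x|^r\phi\in L^2$ with $r>3/2$ forces $x\phi\in L^1$), and the offending term improves to $t\,|\xi|^{2+a-r}$, locally $L^2$ for $r<5/2+a$ — precisely the range in part (ii), and the reason one must replace $\z_{s,r}$ by $\dot{\z}_{s,r}$ (the subspace with $\widehat f(0)=0$). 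All contributions away from $\xi=0$, and as $|\xi|\to\infty$, are controlled by the super-polynomial decay of $\psi_2$.

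The nonlinear term $\mathcal{D}_\xi^{r}\!\big(\int_0^t\psi(\xi,t-t')\,i\xi\,\widehat{u^2}(\xi,t')\,dt'\big)$ is handled along the same lines. When the derivatives hit $\psi$, the dangerous low-frequency factor becomes $(t-t')\,|\xi|^{1+a-r}\cdot|\xi|\,\widehat{u^2}\sim (t-t')\,|\xi|^{2+a-r}$, locally $L^2$ on the wider range $r<5/2+a$; hence the inhomogeneous term never obstructs, and, because of the $i\xi$ factor and $\psi(0,\cdot)\equiv 1$, it automatically carries zero mean, so $\dot{\z}_{s,r}$ is preserved by the flow. (Consistently, $\int u(x,t)\,dx$ is conserved: $\widehat{D^\alpha u}(0)=0$ and the remaining terms are exact derivatives, so the constraint $\widehat\phi(0)=0$ is compatible with the evolution.) When the derivatives hit $\widehat{u^2}$ one uses again the fractional Leibniz rule together with $\||x|^r u^2\|_{L^2}\lesssim\|u\|_{L^\infty}\||x|^r u\|_{L^2}\lesssim\|u\|_{H^s}\||x|^r u\|_{L^2}$ (valid once $s>1/2$, as arranged above, and $s\ge r$), the $\xi$-factor again only helping at low frequency. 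Combining the linear and nonlinear bounds closes the contraction on a time interval depending only on $\|\phi\|_{H^s}$; iterating and invoking the global $H^s$ control gives (i) and (ii). I expect the principal difficulty to be the careful bookkeeping of the Leibniz expansion of $\mathcal{D}_\xi^{r}\psi$ near $\xi=0$ — in particular, separating the $|\xi|^{1+a-r}$ singularity coming from the dissipative symbol from the $\sgn(\xi)$-type singularity coming from the dispersive symbol — and verifying that these, and nothing else, pin down the thresholds $3/2+a$ and $5/2+a$.
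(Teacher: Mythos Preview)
Your threshold heuristics are right, but the plan departs from the paper in two structural ways, and the first hides a gap. The paper does \emph{not} run a contraction in $C([0,T];\z_{s,r})$: it takes the global $H^s$ solution from \cite{vento} as given, invokes the parabolic regularization $u\in C((0,T];H^\infty)$ (Proposition after \eqref{reg}), and proves \emph{persistence} of $\|\langle x\rangle^{r}u(t)\|$ via the Duhamel formula and Gronwall. In your contraction scheme the nonlinear bound reduces, after the semigroup weighted estimate, to $\|\langle x\rangle^{r}\p_x(u^2)\|\lesssim\|\p_x u\|_{L^\infty}\|\langle x\rangle^{r}u\|$, and $\|\p_x u\|_{L^\infty}$ is not controlled by $\|u\|_{\z_{s,r}}$ unless $s>3/2$; reducing to $s>1/2$ as you suggest is not enough. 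The paper avoids this entirely because it works a posteriori with a solution already known to be smooth for $t>0$, so $\|\p_x u(\tau)\|_{L^\infty}\lesssim \tau^{-\lambda/(1+a)}\|\phi\|_{H^s}$ is available inside the time integral. If you insist on a contraction you would need to build the smoothing into the fixed-point space, which is more work than you indicate.

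Second, the paper does not prove the semigroup weighted bound for all $r$ directly. It establishes $\|D_\xi^{r}(\psi\hat\phi)\|\lesssim\rho(t)\|\langle x\rangle^{r}\phi\|$ only on the windows $r\in(0,1]$, $r\in(3/2,3/2+a)$, and (under $\hat\phi(0)=0$) $r\in(5/2,5/2+a)$, by explicit computation with the Stein derivative and Proposition~\ref{Dstein}, and then fills the gaps $r\in(1,3/2]$ and $r\in(3/2+a,5/2]$ by the Stein--Weiss interpolation theorem with change of measure \cite{HBS}. Your Leibniz expansion of $\mathcal D_\xi^{r}\psi$ for general $r$ would have to track integer and fractional parts separately, and the actual obstruction is not the bare power $|\xi|^{1+a-r}$ but the factor $|\xi|^{a}\sgn(\xi)$ produced by one $\xi$-derivative of $\emod$, tested against $\mathcal D_\xi^{r-1}$; Proposition~\ref{Dstein} gives $\mathcal D_\xi^{\theta}(|\xi|^{a}\sgn(\xi)\chi)\in L^2$ iff $\theta<a+1/2$, which is $r<3/2+a$. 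Your heuristic lands on the same number but blurs the role of the sign, which is precisely what makes the obstruction sharp and drives the unique continuation in Theorem~\ref{P1}.
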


The definition of spaces $\z_{s,r}$ and $\dot{\mathcal{Z}}_{s,r}$ can be found at the beginning of section 2. 

\begin{theorem}\label{P1}
Let $u\in C([0,T]; \mathcal{Z}_{1,1})$ be a solution of the IVP
\eqref{bodiss}, with $a\in (0,1]$. If there exist two different times $t_1, t_2 \in [0,T]$ such
that $u(t_j)\in \mathcal{Z}_{3/2+a,3/2+a}$, $j=1,2$, then $$\hat{u}(0,t)=0,
\ \quad \mbox{for\,\,all}\; \ t\in [0,T].$$
\end{theorem}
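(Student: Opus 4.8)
The starting point is the Duhamel formula for \eqref{bodiss},
\begin{equation*}
u(t)=U(t)\phi-\int_0^t U(t-t')(uu_x)(t')\,dt',\qquad \widehat{U(t)g}(\xi)=\e\,\widehat g(\xi),
\end{equation*}
where, for a solution in $C([0,T];\z_{1,1})$, we may freely use $\widehat{uu_x}(\xi,t)=\tfrac{i\xi}{2}\widehat{u^2}(\xi,t)$. Evaluating this identity at $\xi=0$, where the nonlinear integrand vanishes and $\e|_{\xi=0}=1$, gives $\widehat u(0,t)=\widehat\phi(0)$ for every $t\in[0,T]$; that is, $\widehat u(0,\cdot)$ is conserved, so it suffices to show that this constant is zero. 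Since the equation is autonomous we may take $t_1=0<t_2=:\tau$ and relabel $\phi:=u(\cdot,t_1)\in\z_{3/2+a,3/2+a}$; the task becomes showing $\widehat\phi(0)=0$.

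Next I would apply the fractional frequency derivative $\mathcal{D}_\xi^{3/2+a}$ to the Fourier transform of the integral equation on $[0,\tau]$,
\begin{equation*}
\widehat u(\xi,\tau)=\psi(\xi,\tau)\widehat\phi(\xi)-\int_0^\tau\psi(\xi,\tau-t')\,\tfrac{i\xi}{2}\,\widehat{u^2}(\xi,t')\,dt',\qquad\psi(\xi,t)=\e.
\end{equation*}
By the characterization of the weighted spaces recalled in Section 2, membership of $v$ in $\z_{3/2+a,3/2+a}$ is equivalent to $\mathcal{D}_\xi^{3/2+a}\widehat v\in L^2$ together with lower-order information already at hand; hence the left-hand side lies in $L^2$ by hypothesis, and everything reduces to understanding the two terms on the right. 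For the Duhamel term the crucial point is that the extra factor $\xi$ produced by $uu_x$ makes the kernel $\xi\psi(\xi,\tau-t')$ vanish to first order at $\xi=0$, which cancels the non-square-integrable singularity that $\mathcal{D}_\xi^{3/2+a}$ would otherwise create at the origin and keeps the relevant norms of the kernel bounded by an integrable power $(\tau-t')^{-\gamma}$, $\gamma<1$ (this uses $a>0$). Moreover, since $\phi\in\z_{3/2+a,3/2+a}$, Theorem~\ref{well}(i) yields $u\in C([0,\tau];\z_{r,r})$ for every $r<3/2+a$ (and $u\in C([0,\tau];H^{3/2+a})$), hence by the weighted algebra property $\mathcal{D}_\xi^{r}\widehat{u^2}(\cdot,t')\in L^2$ uniformly in $t'$ for all $r<3/2+a$. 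Distributing the $3/2+a$ derivatives between $\xi\psi(\cdot,\tau-t')$ and $\widehat{u^2}(\cdot,t')$ via fractional Leibniz/commutator estimates — always keeping strictly fewer than $3/2+a$ of them on $\widehat{u^2}$, the rest on the kernel at a cost that is integrable in $t'$ — one concludes that $\mathcal{D}_\xi^{3/2+a}$ of the Duhamel term belongs to $L^2$.

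For the homogeneous term $\psi(\cdot,\tau)\widehat\phi$ I would isolate the single genuinely bad contribution. Writing $\mathcal{D}_\xi^{3/2+a}[\psi(\cdot,\tau)\widehat\phi]=\widehat\phi(0)\,c_{a,\tau}\,G+R$, the term $G$ is the explicit leading contribution obtained when all derivatives fall on $\psi(\cdot,\tau)$ and hit the non-smooth part of the phase near $\xi=0$: for $a<1$ this comes from $\partial_\xi e^{-\tau|\xi|^{1+a}}=-\tau(1+a)|\xi|^{a-1}\xi\,e^{-\tau|\xi|^{1+a}}$ followed by $\mathcal{D}_\xi^{1/2+a}$ of the degree-$a$ homogeneous function $|\xi|^{a-1}\xi$, which behaves like $|\xi|^{-1/2}\sgn\xi$ near the origin; for $a=1$ the dispersive factor $e^{-i\tau\xi|\xi|}$ contributes a term of the same type, and the two critical contributions add rather than cancel. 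In all cases $G\notin L^2$ and $c_{a,\tau}\neq0$. Every remaining contribution $R$ is in $L^2$: those carrying at least one frequency derivative on $\widehat\phi$ are controlled by $|x|^{3/2+a}\phi\in L^2$ (equivalently $\widehat\phi\in H^{3/2+a}(\R)$ in the frequency variable) together with $\|\mathcal{D}_\xi^{\sigma}\psi(\cdot,\tau)\|_{L^2}<\infty$ for $\sigma<3/2+a$; the pieces where all derivatives hit the dispersive factor produce (for $a<1$) powers $|\xi|^{1/2-a}\sgn\xi\in L^2_{loc}$; and $\psi(\cdot,\tau)\mathcal{D}_\xi^{3/2+a}\widehat\phi\in L^2$ trivially. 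Combining the three, $\widehat\phi(0)\,c_{a,\tau}\,G=\mathcal{D}_\xi^{3/2+a}\widehat u(\cdot,\tau)+\mathcal{D}_\xi^{3/2+a}[\text{Duhamel}]-R\in L^2$, which forces $\widehat\phi(0)=0$; with the conservation of $\widehat u(0,\cdot)$ noted at the outset this gives $\widehat u(0,t)=0$ for all $t\in[0,T]$.

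The main obstacle is the endpoint nature of $r=3/2+a$: at exactly this exponent the persistence theorem no longer applies, so $\widehat{u^2}(\cdot,t')$ is \emph{not} known to carry $3/2+a$ frequency derivatives in $L^2$, and one must genuinely exploit the first-order vanishing of $\xi\psi$ at the origin, together with a careful bookkeeping that simultaneously keeps the time weight $(\tau-t')^{-\gamma}$ integrable and all the powers of $|\xi|$ arising from $\mathcal{D}_\xi^{3/2+a}$ strictly on the square-integrable side, in order to quarantine the one non-$L^2$ term and recognize its coefficient as $\widehat\phi(0)$. The restriction $a\le1$ enters precisely here: for $a>1$ further obstructions (ultimately the need to differentiate through $\sgn\xi$) would intervene below this level and the simple dichotomy above would break down.
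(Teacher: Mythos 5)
Your overall strategy coincides with the paper's: weight the Duhamel formula by $|x|^{3/2+a}$, show that every contribution except one explicit multiple of $\hat{\phi}(0)$ lands in $L^2$, and conclude from the failure of square-integrability of that one term that $\hat{\phi}(0)=0$; conservation of $\hat{u}(0,t)$ then finishes. Your identification of the critical term as coming from $\partial_\xi e^{-t|\xi|^{1+a}}$, producing $|\xi|^a\sgn(\xi)$ whose image under $\mathcal{D}_\xi^{1/2+a}$ behaves like $|\eta|^{-1/2}$ near the origin (the borderline case $\theta=\gamma+1/2$ of Proposition \ref{Dstein}), and your observation that the first-order vanishing of $\xi\psi$ renders the Duhamel kernel harmless, are both correct and are exactly the paper's observations.

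There are, however, two genuine gaps. First, in the Duhamel term you cannot ``always keep strictly fewer than $3/2+a$'' frequency weights on $\widehat{u^2}$: the contribution in which $\partial_\xi$ falls on $\hat{z}=\frac{i\xi}{2}\widehat{u^2}$ produces $\psi\,\xi\,\partial_\xi\widehat{u^2}$, and after any admissible Leibniz splitting (the Stein-derivative product rules \eqref{Leib} and \eqref{Leibh} place the \emph{full} fractional order on one factor or the other) one is forced to control $\mathcal{D}_\xi^{1/2+a}\partial_\xi\widehat{u^2}$, i.e.\ $\||x|^{3/2+a}u^2\|$. This is the endpoint weight on $u^2$, which does not follow from $u\in\z_{r,r}$ for $r<3/2+a$. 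The paper's Claim \ref{claim2} supplies it by a mechanism your proposal omits: the quadratic structure $\langle x\rangle^{3/2+a}u^2=(\langle x\rangle^{3/4+a/2}u)^2$, the parabolic smoothing $u\in C((0,T];H^\infty)$ from \eqref{reg2}, and the interpolation inequality of Proposition \ref{Jota}, which together give $\langle x\rangle^{3/2+a}u^2(t)\in L^2$ even though $\langle x\rangle^{3/2+a}u(t)\notin L^2$. Second, your decomposition $\mathcal{D}_\xi^{1/2+a}\circ\partial_\xi$ only makes sense for $a<1/2$, since the Stein derivative and Theorem \ref{stein} require the fractional order to lie in $(0,1)$. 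For $a\in[1/2,1)$ one must write $3/2+a=2+\gamma$ with $\gamma=a-1/2$ and work with $\mathcal{D}_\xi^{\gamma}\partial_\xi^2$; the critical term is then the \emph{even} function $|\xi|^{a-1}$ arising from $\partial_\xi(|\xi|^a\sgn(\xi))$, and the fact that it leaves $L^2$ after applying $\mathcal{D}_\xi^{\gamma}$ is a separate, differently proved statement (Proposition \ref{DsteinL2}), not covered by Proposition \ref{Dstein}. Relatedly, for $a=1$ your assertion that the dissipative and dispersive singular contributions ``add'' is inaccurate: $e^{-t\xi^2}$ is smooth, and the only singular term is $-2it\,\sgn(\xi)$ from the dispersion; the paper treats this case by the argument of \cite{pastran}.
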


\begin{theorem}\label{P2}
Let $u\in C([0,T]; \mathcal{Z}_{2,2})$ be a solution of the $\mathrm{IVP}$
\eqref{bodiss}, with $a\in (0,1]$. If there exist three different times $t_1, t_2, t_3 \in [0,T]$ with $u(t_j)\in \mathcal{Z}_{5/2+a,5/2+a}$, $j=1,2,3$, then there exists $t_1<\bar{t}<t_2$ such that $$u(x,t)=0,
\ \quad \mbox{for\,\,all}\; \ x\in \R, \ t\geq \bar{t}.$$
\end{theorem}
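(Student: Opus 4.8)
The plan is to run the same Fourier-side machinery used in Theorem \ref{P1}, but one derivative/weight higher. Writing the equation on the Fourier side, $\hat u$ satisfies
\begin{equation*}
\hat u(\xi,t) = e^{-it\xi|\xi|-t|\xi|^{1+a}}\hat\phi(\xi) - \frac12\int_0^t e^{-(t-t')\xi|\xi|-(t-t')|\xi|^{1+a}}\,(i\xi)\,\widehat{u^2}(\xi,t')\,dt',
\end{equation*}
where I take $\phi = u(t_1)$ and reset the initial time to $t_1$. The point is that membership of $u(t_j)$ in $\z_{5/2+a,5/2+a}$ is, via the characterization of weighted spaces already used in the paper, equivalent to $\hat u(\cdot,t_j)\in H^{5/2+a}$ together with the two vanishing conditions $\hat u(0,t_j)=0$ and $\partial_\xi\hat u(0,t_j)=0$; the first of these, for all $t$, is exactly the conclusion of Theorem \ref{P1}, which applies here since $\z_{2,2}\subset\z_{1,1}$ and $\z_{5/2+a,5/2+a}\subset\z_{3/2+a,3/2+a}$. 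So from the two times $t_1,t_2$ I already get $\hat u(0,t)\equiv 0$ on $[0,T]$, hence $\int u(x,t)\,dx = 0$ for all $t$, which is what lets $|x|^{2}u$ (equivalently two $\xi$-derivatives of $\hat u$) stay in $L^2$ in the first place.

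Next I differentiate the Duhamel formula twice in $\xi$ and evaluate at $\xi=0$. The two derivatives hitting the multiplier $e^{-t\xi|\xi|-t|\xi|^{1+a}}$ produce, after using $\hat u(0,t')=0$, a term whose leading singular piece is governed by $\partial_\xi^2(\xi|\xi|)$ and $\partial_\xi^2|\xi|^{1+a}$; the genuinely non-integrable-looking contribution is the one carrying $|\xi|^{a}$ (resp. $|\xi|^{a-1}$) near $\xi=0$, and controlling it is where the hypothesis $u\in\z_{2,2}$ and the smoothing $e^{-(t-t')|\xi|^{1+a}}$ are used exactly as in the proof of Theorem \ref{P1}. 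Carrying this out gives a clean identity of the form
\begin{equation*}
\partial_\xi^2\hat u(0,t) = c\,t\!\int \!u(x,t)^2\,dx + (\text{linear-in-}\partial_\xi^2\hat\phi\text{ and lower-order, continuous in }t),
\end{equation*}
for some nonzero constant $c$ depending on $a$; the precise form I expect is that the obstruction is proportional to $t\,\|u(t')\|_{L^2}^2$ integrated against a positive kernel, in particular a strictly monotone function of $t$ as long as $u\not\equiv 0$. The condition $\partial_\xi^2\hat u(0,t_j)=0$ for the three times then forces this monotone quantity to vanish at $t_1,t_2,t_3$; by the mean value theorem applied between consecutive pairs and monotonicity, it must vanish on an interval, which forces $\|u(t')\|_{L^2}=0$ for $t'$ in a subinterval, and then the $L^2$ decay of the flow (from the dissipative term, the equation being an $L^2$ contraction up to the Burgers nonlinearity which conserves the $L^2$ norm) propagates $u\equiv 0$ for all $t\ge\bar t$ with $\bar t\in(t_1,t_2)$.

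The main obstacle is the second step: extracting from the twice-differentiated Duhamel formula a \emph{genuine} identity rather than just a bound. One has to show that every term except the $t\int u^2$ term is either explicitly lower order or continuous and vanishing in the right way, and in particular that the a priori dangerous pieces coming from $\partial_\xi^2$ of the dispersive symbol $\xi|\xi|$ and the dissipative symbol $|\xi|^{1+a}$ (which are only Hölder, not $C^2$, at the origin) actually cancel or are absorbed once $\hat u(0,t')=0$ is used — this is precisely the place where $a\le 1$ matters and where the argument would fail at a lower weight. A secondary technical point is justifying that $t\mapsto\partial_\xi^2\hat u(0,t)$ and $t\mapsto\int u^2\,dx$ are continuous on $[0,T]$ (so that vanishing at three points plus monotonicity really does give an interval of zeros), which follows from $u\in C([0,T];\z_{2,2})$ and the continuity of the nonlinear terms in that space, already implicit in the well-posedness theory of Theorem \ref{well}.
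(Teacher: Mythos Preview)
Your proposal has a genuine gap at its foundation. You assert that membership in $\z_{5/2+a,5/2+a}$ is equivalent to $\hat u\in H^{5/2+a}$ together with the vanishing conditions $\hat u(0)=0$ and $\partial_\xi\hat u(0)=0$, and you later invoke $\partial_\xi^2\hat u(0,t_j)=0$. None of these pointwise conditions follows from $u(t_j)\in\z_{5/2+a,5/2+a}$: by definition $\z_{s,r}=H^s\cap L^2_r$ carries no moment constraints, and in particular $\partial_\xi\hat u(0,t_j)=-i\int x\,u(x,t_j)\,dx$ and $\partial_\xi^2\hat u(0,t_j)=-\int x^2\,u(x,t_j)\,dx$ are generically nonzero. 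The quantity you plan to force to zero at three times therefore does not vanish under the hypotheses, and the monotonicity argument you sketch has nothing to work with.

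The paper's mechanism is different and is the essential point. The obstruction to $u(t)\in\z_{5/2+a,5/2+a}$ is not a pointwise value of $\partial_\xi^2\hat u$ but whether $D_\xi^{\gamma}\partial_\xi^2\hat u(\cdot,t)\in L^2$ with $\gamma=1/2+a$. Expanding $\partial_\xi^2(\psi\hat\phi)$ and the Duhamel term and localizing with $\chi$, every piece is shown to lie in $L^2$ except a single term proportional to $\mathcal D_\xi^{\gamma}\big(|\xi|^{a}\sgn(\xi)\chi\big)$, which is \emph{not} in $L^2$ precisely because $\gamma=a+1/2$ (see \eqref{Dstein1}). Its scalar coefficient, after using $\partial_\xi\hat z(0,\tau)=\tfrac{i}{2}\|u(\tau)\|^2$ and integrating by parts, equals $-i\int_0^t\!\int x\,u(x,\tau)\,dx\,d\tau$. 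It is this coefficient that must vanish at $t=t_2$, and (rerunning from $t_2$) also between $t_2$ and $t_3$. Rolle's theorem then produces $\tau_1\in(t_1,t_2)$ and $\tau_2\in(t_2,t_3)$ with $\int x\,u(\cdot,\tau_k)\,dx=0$; since $\tfrac{d}{dt}\int x\,u\,dx=\tfrac12\|u\|^2\ge 0$, this forces $\|u(t)\|=0$ on $[\tau_1,\tau_2]$, and the dissipative $L^2$ decay propagates vanishing to all $t\ge\tau_1$. Your closing paragraph is close to this in spirit, but the object that must vanish is a scalar coefficient in front of a fixed non-$L^2$ function, not a derivative of $\hat u$ at the origin.
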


Now we present some ingredients for the proof of Theorems \ref{well}--\ref{P2}. 

With respect to Theorems \ref{P1} and \ref{P2} we will adapt the techniques introduced by Fonseca, Linares and Ponce \cite{FLP1} for the study of the Benjamin-Ono equation with a generalized dispersion. It consists in the use of the Stein derivative for obtaining the unique continuation principles. Our proof is a little different from the one presented in \cite{FLP1}. In fact, we made use of an additional commutator estimate for the derivative $D_\xi^\gamma$ (see Proposition \ref{Comu} below), see also \cite{ponce} and \cite{AP}.

The proof of Theorem \ref{well} will be obtained by using the ideas of  Fonseca, Pastr\'an, and Rodr\'iguez-blanco \cite{pastran}, as well as the Stein-Weiss interpolation theorem with change of measure, see \cite{HBS}. In this work the authors studied a version of the BO equation with a dissipative effect. 
We note that the dissipative term in (dBO) equation, has the effect of pushing down the indices $r$ and $s$ of space $\z_{s,r}$, in relation those obtained in \cite{ponce} and \cite{FLP1}. 

Theorem \ref{P1} shows that the Theorem \ref{well}(part i)) is sharp in the sense that it's not possible to find an index $r$ more than $3/2+a$, so that the part i) still hold valid. Theorem \ref{P2} also shows that the Theorem \ref{well}(part ii)) is sharp, in the same sense previous. 

Reciprocally our well-posedness results show that the Theorems \ref{P1} and \ref{P2} are also sharp, in the sense that the indexes $3/2+a$ and $5/2+a$, respectively, cannot be pushed down.

Now we will describe some consequences of Theorems \ref{well}--\ref{P2}. By the Theorem \ref{well}(part i)) the decay $r=(3/2+a)-$ is optimal, so that the persistence property is satisfied for general initial data. More precisely, it shows that  the solution $u$ of the IVP \eqref{bodiss} with initial data $\phi \in \z_{s,r}$, $s\geq r\geq 3/2+a$, $\hat{\phi}(0)\not=0$, is such that 
$u\in C([0,T];\z_{s,(3/2+a)-})$, for $T>0$, but   
does not exist a non-trivial solution $u$ with initial data $\phi$ that verifies $u\in C([0,T];\z_{s,3/2+a})$.
   
The decay  $r=(5/2+a)-$ is the largest possible, in the following sense, by the Theorem \ref{well}(part ii)), for some $T>0$ there are non-trivial solutions $u\in C([0,T];\dot{\z}_{s,(5/2+a)-})$, but Theorem \ref{P2} implies that does not exist a non-trivial solutions $u\in C([0,T];\dot{\z}_{s,5/2+a})$. 

The rest of this paper is as follows. Section 2 contains some preliminary estimates that will be useful in the coming sections.
In the section 3 we prove the well-posedness. Theorem \ref{P1} will be proved in section 4. To finish, the proof of Theorem \ref{P2} will be present in section 5.

\section{Preliminaries}

\subsection{Notation}\label{notation}
In this paper, we use the following notation. We say $a \lesssim b$ if there exists a constant $c>0$ such that $a\leq c b$. We also write $a \lesssim_{l} b$  when the constant depends on only parameter $l$. The Fourier transform of $f$, is defined by
$$\hat{f}(\xi)=\int_{\R}e^{-i\xi x}f(x)dx.$$
If $s\in \R$, $H^s:=H^s(\R)$ represents the nonhomogeneous Sobolev space defined as
 $$H^s(\R)=\{f\in \mathcal{S}'(\R): \|f\|_{H^s}<\infty\},$$
 where
 $$\|f\|_{H^s}=\|\langle \xi \rangle^s \hat{f}\|_{L^2_\xi},$$
 and $\langle \xi \rangle=(1+\xi^2)^{1/2}$.
 In addition, we define the Bessel potential $J^s$ by
$$(J^s f)^{\wedge}(\xi)=\langle \xi \rangle^s\hat{f}(\xi), \ \mbox{for all} \ f\in \mathcal{S}'(\R),$$
hence $\|J^s f\|_{L^2_x}=\|f\|_{H^s}$. The weighted Sobolev space is defined by $$
\mathcal{Z}_{s,r}=H^s(\R)\cap L^2_r(\R)
,$$ where $L^2_r(\R)=L^2(\langle x\rangle^{2r}dx)$. The norm in $\mathcal{Z}_{s,r}$ is
given by
$\|\cdot\|_{\mathcal{Z}_{s,r}}^2=\|\cdot\|_{H^s}^2+\|\cdot\|_{L^2_r}^2$. We also introduced the notation
$$\dot{\mathcal{Z}}_{s,r}=\{f\in\mathcal{Z}_{s,r}: \hat{f}(0)=0\}.$$

For help in our estimates, we define the function $\chi\in C_0^\infty(\R)$, with $\mbox{supp}\chi\subset [-2,2]$ and $\chi\equiv1$ in $(-1,1)$.

In the rest of the paper, we will denote the $L^2$-norm in the $x$ variable by $\|\cdot\|_{L^2_x}:=\|\cdot\|$.

Next, we introduce some results which will be useful to
demonstrate our main results.

\begin{proposition}\label{Jota}
Let $\delta,\nu>0$ such that $J^{\delta}f\in L^{2}(\R)$ and
$\langle x\rangle^\nu f
\in L^{2}(\R).$ Then for any
$\beta \in (0,1)$
\begin{equation}\label{inter1}
\|J^{\beta \delta}(\langle x\rangle^{(1-\beta)\nu}f)\|\leq c\|\langle x
\rangle^\nu f\|^{1-\beta}\|J^{\delta}f\|^{\beta}.
\end{equation}
\end{proposition}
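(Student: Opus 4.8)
The plan is to prove the interpolation inequality \eqref{inter1} by complex interpolation, realizing both sides as values of an analytic family of operators acting on a fixed function and applying the three-lines lemma (Stein interpolation). First I would set up the analytic family: for $z$ in the strip $0 \leq \mathrm{Re}\, z \leq 1$, consider the operator
\[
T_z g = J^{z\delta}\bigl(\langle x\rangle^{(1-z)\nu} \langle x\rangle^{-\nu} J^{-\delta} g\bigr),
\]
so that, writing $f = \langle x\rangle^{-\nu} J^{-\delta} g$ for a suitable $g$, one has $T_z g = J^{z\delta}(\langle x\rangle^{(1-z)\nu} f)$. The goal is to bound $T_\beta$ on $L^2$ for real $\beta \in (0,1)$, which is exactly the left-hand side of \eqref{inter1}, while the two endpoints $\mathrm{Re}\, z = 0$ and $\mathrm{Re}\, z = 1$ give the two factors on the right-hand side.

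The key steps, in order: (1) At $\mathrm{Re}\, z = 0$, write $z = iy$; then $J^{iy\delta}$ is a Fourier multiplier with symbol $\langle \xi\rangle^{iy\delta}$ of modulus one, hence an isometry on $L^2$, and $\langle x\rangle^{(1-iy)\nu} \langle x\rangle^{-\nu} = \langle x\rangle^{-iy\nu}$ has modulus one, so $\|T_{iy} g\| = \|g\|$ — but one must track that the "base point" is arranged so that at $z = 0$ the operator reduces to $f \mapsto \langle x\rangle^{\nu} f$ composed with $J^{0} = \mathrm{Id}$, producing the factor $\|\langle x\rangle^\nu f\|$. (2) At $\mathrm{Re}\, z = 1$, write $z = 1 + iy$; then $\langle x\rangle^{(1-z)\nu} = \langle x\rangle^{-iy\nu}$ has modulus one and $J^{(1+iy)\delta} = J^{\delta} J^{iy\delta}$, so the operator reduces to $f \mapsto J^{\delta} f$ up to modulus-one factors, producing the factor $\|J^{\delta} f\|$. (3) One checks the requisite analyticity and the admissible growth of $z \mapsto \langle T_z g, h\rangle$ on the strip for $g, h$ in a dense class (e.g. Schwartz functions), so that Stein's interpolation theorem applies and yields
\[
\|T_\beta g\| \leq \|\langle x\rangle^\nu f\|^{1-\beta}\|J^\delta f\|^{\beta}
\]
with the interpolation constant $c$; choosing $g$ appropriately and unwinding the substitution gives \eqref{inter1}. (4) Finally, a density/limiting argument removes the Schwartz restriction and handles general $f$ with $J^\delta f \in L^2$ and $\langle x\rangle^\nu f \in L^2$.

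The main obstacle I expect is the technical bookkeeping around the endpoint identifications — namely verifying that the complex powers $\langle x\rangle^{(1-z)\nu}$ and $J^{z\delta}$ genuinely have modulus-one symbols on the two edges of the strip and, more delicately, that the map $z \mapsto \langle x\rangle^{(1-z)\nu} f$ and its composition with the non-local operator $J^{z\delta}$ depend analytically on $z$ with the polynomial growth bounds required by Stein's theorem. One must be careful that $\langle x\rangle^{(1-z)\nu}$ can grow in $x$ when $\mathrm{Re}(1-z) > 0$, i.e. when $\mathrm{Re}\, z < 1$, so on the open strip the intermediate expressions need not be in $L^2$ a priori; this is why the argument is carried out first on a dense subclass where everything is finite, and the inequality is then extended by density. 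An alternative, and perhaps cleaner, route is to invoke directly the known weighted interpolation result of this type (as in \cite{Iorio1}, \cite{ponce}) for $H^s(\R) \cap L^2(\langle x\rangle^{2r}\,dx)$, but I would still present the Stein-interpolation proof since it is self-contained and the structure transfers verbatim to the change-of-measure version \cite{HBS} used later in the paper.
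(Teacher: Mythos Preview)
Your approach is correct: the inequality is indeed an instance of Stein's complex interpolation (three-lines lemma) applied to the analytic family $z\mapsto J^{z\delta}(\langle x\rangle^{(1-z)\nu}\cdot)$, with the modulus-one endpoint identifications you describe, and the density argument you outline is the standard way to close it. The paper itself does not give a proof but simply refers the reader to \cite{ponce} (Fonseca--Ponce), where precisely this Stein-interpolation argument is carried out; so your proposal matches the approach in the cited reference.
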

\begin{proof}
See \cite{ponce}. 
\end{proof}

\begin{remark}
 Assuming  that $u$ is sufficiently regular we obtain, for every t in which the solution there exists
\begin{equation}\label{consquan}
\int u(x,t)dx=\int \phi(x)dx.
\end{equation}
 This implies that
\begin{equation}\label{fourieru}
\hat{u}(0,t)=\hat{\phi}(0).
\end{equation}
Moreover
\begin{equation}\label{consdata}
\frac{d}{dt}\|u(t)\|^2<0,
\end{equation}

and
\begin{equation}\label{consdata1}
\frac{d}{dt}\int x u(x,t)dx=\frac12 \|u(t)\|^2.
\end{equation}

\end{remark}

By defining $L^{p}_{s}:=(1-\Delta)^{-s/2}L^{p}(\R^n)$, the following result characterizes these spaces.
\begin{theorem}\label{stein}
Let $b\in (0,1)$ and $2n/(n+2b)<p<\infty.$ Then $f\in L^{p}_{b}(\R^{n})$ if and only if
\begin{itemize}
\item [a)] $f\in L^{p}(\R^{n}),$ \item [b)]
$\mathcal{D}^{b}f(x)={\displaystyle \left (
\int_{\R^{n}}\frac{|f(x)-f(y)|^{2}}{|x-y|^{n+2b}}dy\right)^{1/2}} \in
L^{p}(\R^{n}),$ with,
\begin{equation}\label{equiv}
\|f\|_{b,p}\equiv \|(1-\Delta)^{b/2}f\|_{p}=\|J^{b}f\|_{p}\simeq \|f\|_{p}+\|D^{b}f\|_{p}\simeq \|f\|_{p}+\|\mathcal{D}^{b}f\|_{p},
\end{equation}
where for $s\in \R$
$$D^{s}=(-\Delta)^{s/2} \ with \ D^{s}=(\mathcal{H}\partial_{x})^{s}, \ \mbox{if} \ n=1.$$
\end{itemize}
\end{theorem}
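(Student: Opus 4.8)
This is E.~M.~Stein's characterization of the Bessel potential spaces; the plan is to derive it from Littlewood--Paley theory. I would first recall that for $1<p<\infty$ the space $L^p_b=(1-\Delta)^{-b/2}L^p$ coincides with the inhomogeneous Triebel--Lizorkin space $F^b_{p,2}$, which in particular yields the equivalence $\|J^bf\|_p\simeq\|f\|_p+\|D^bf\|_p$ of \eqref{equiv} and reduces everything else to showing that, throughout the stated range, the conditions $f\in L^p$ and $\mathcal D^bf\in L^p$ together characterize $F^b_{p,2}$; after removing the low frequencies this becomes the homogeneous statement
\begin{equation*}
\|\mathcal D^bf\|_p\;\simeq\;\|f\|_{\dot{F}^b_{p,2}},\qquad\|f\|_{\dot{F}^b_{p,2}}:=\Big\|\Big(\sum_{j\in\Z}2^{2jb}\,|\dot{\Delta}_jf|^2\Big)^{1/2}\Big\|_p,
\end{equation*}
where the $\dot{\Delta}_j$ are homogeneous Littlewood--Paley projections onto $|\xi|\sim2^j$, i.e.\ convolution with kernels $\psi_j(h)=2^{jn}\psi(2^jh)$, $\int\psi=0$.

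To prove this equivalence I would rewrite the Stein square function as an $L^2$ modulus of smoothness,
\begin{equation*}
\big(\mathcal D^bf(x)\big)^2=c_{n,b}\int_0^\infty t^{-2b}\Big(t^{-n}\!\!\int_{|h|\le t}|f(x)-f(x-h)|^2\,dh\Big)\frac{dt}{t},
\end{equation*}
so that the assertion is exactly the ``ball means of differences'' description of $\dot{F}^b_{p,2}$. Both inclusions then go through a dyadic decomposition: writing $f=\sum_j\dot{\Delta}_jf$ one estimates
\begin{equation*}
|\dot{\Delta}_jf(x)-\dot{\Delta}_jf(x-h)|\lesssim\min(2^j|h|,1)\big(\mathcal M^{**}_\lambda(\dot{\Delta}_jf)(x)+\mathcal M^{**}_\lambda(\dot{\Delta}_jf)(x-h)\big),
\end{equation*}
where $\mathcal M^{**}_\lambda$ is the Peetre maximal function at frequency scale $2^j$ and the factor $\min(2^j|h|,1)$ comes from the mean value inequality together with the cancellation $\int\psi_j=0$. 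Inserting this into the two displays, organizing the resulting sum/integral in $(j,h)$ around the resonance $2^j|h|\sim1$, using $0<b<1$ to make the $h$-integrals converge on both sides of that resonance, and closing with the Fefferman--Stein vector-valued maximal inequality yields both $\|\mathcal D^bf\|_p\lesssim\|f\|_{\dot{F}^b_{p,2}}$ and its reverse. Combining these with the standard relation $\|f\|_p+\|f\|_{\dot{F}^b_{p,2}}\simeq\|f\|_{F^b_{p,2}}=\|J^bf\|_p$ then delivers \eqref{equiv}.

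The step I expect to be the main obstacle is making this maximal-function machinery work at the low end of the range. For $p<1$ --- which occurs in the stated range when $b>n/2$ --- the Hardy--Littlewood maximal operator is not bounded on $L^p$, and even above $1$ the margin down to $p=2n/(n+2b)$ is too narrow to run the vector-valued inequalities with $\mathcal M$ itself; the remedy is to use the Peetre maximal function $\mathcal M^{**}_\lambda g(x)=\sup_y|g(x-y)|(1+2^j|y|)^{-\lambda}$ (for $g$ with frequency support $\lesssim2^j$) throughout, whose vector-valued ($\ell^2$) estimate is obtained by applying the ordinary maximal inequality to $|g|^{n/\lambda}$ and therefore requires $\lambda>n/\min(p,2)$. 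Reconciling this choice of $\lambda$ with the convergence of $\int_{|h|\le1}|h|^{2-n-2b}\,dh$ and $\int_{|h|\ge1}|h|^{-n-2b}\,dh$ and with the weights $2^{2jb}$ is precisely what the hypothesis $p>2n/(n+2b)$ (equivalently $b>n(\tfrac1p-\tfrac12)_+$) is there to provide. Finally, I would note that the applications in this paper need only $n=1$, and that in the frequently used case $p=2$ the whole theorem reduces to a one-line Plancherel computation: the change of variables $u=h\xi$ in $\int_{\R}|1-e^{-ih\xi}|^2|h|^{-1-2b}\,dh=c_b|\xi|^{2b}$, a constant finite exactly because $0<b<1$, gives $\|\mathcal D^bf\|_{L^2}^2=c_b'\|D^bf\|_{L^2}^2$ at once.
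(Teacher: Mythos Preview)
The paper does not give its own proof of this statement: it simply defers to Stein's original work and moves on. Your sketch therefore goes well beyond what the paper provides. The route you take --- identifying $L^p_b$ with the Triebel--Lizorkin space $F^b_{p,2}$ and then invoking the ``ball means of differences'' characterization of $\dot F^b_{p,2}$ via the Peetre maximal function --- is a correct and standard modern approach, and your analysis of why the threshold $p>2n/(n+2b)$ arises (as the condition $b>n(1/p-1/2)_+$ needed to choose $\lambda$ in the Peetre inequality compatibly with the convergence of the $h$-integrals) is on target. Stein's original argument proceeded instead by direct singular-integral estimates and comparison with Riesz potentials, without a dyadic frequency decomposition. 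One small caveat: the identification $\|J^bf\|_p=\|f\|_{F^b_{p,2}}$ that you invoke at the outset holds only for $1<p<\infty$; in the slice $p\le1$ of the stated range (present when $b>n/2$) one has $F^0_{p,2}=h^p\ne L^p$, so your reduction needs a slight reformulation there. This is irrelevant to the paper itself, which, as you correctly note, uses only $n=1$, $p=2$, where your closing Plancherel computation already settles the matter in one line.
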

\begin{proof}
See \cite{Stein}.
\end{proof}

From the previous theorem, part b), with $p=2$ and $b\in(0,1)$, we have
\begin{equation}\label{Leib}
\|\mathcal{D}^{b}(fg)\|_2 \leq \|f\mathcal{D}^{b}g\|_2 + \|g\mathcal{D}^{b}f\|_2.
\end{equation}

\begin{lemma}\label{Leibnitz}
Let $b\in (0,1)$ and $h$ a measurable function on $\R$ such that $h,h'\in L^{\infty}(\R)$. Then, for all $x\in \R$
\begin{equation}\label{Lei}
\mathcal{D}^b h(x)\lesssim \|h\|_{L^{\infty}}+\|h'\|_{L^\infty},
\end{equation}
moreover
\begin{equation}\label{Leibh}
\|\mathcal{D}^{b}(h f)\| \leq \|\mathcal {D}^b h\|_\infty \|f\| + \|h\|_\infty \|\mathcal{D}^{b}f\|.
\end{equation}
\end{lemma}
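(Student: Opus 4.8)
The plan is to prove the pointwise bound \eqref{Lei} first, and then deduce the product estimate \eqref{Leibh} from \eqref{Leib} together with \eqref{Lei}. For the pointwise bound, I would start from the integral representation of the Stein derivative from Theorem \ref{stein}(b) with $n=1$,
\[
\mathcal{D}^b h(x) = \left( \int_{\R} \frac{|h(x)-h(y)|^2}{|x-y|^{1+2b}}\, dy \right)^{1/2},
\]
and split the integral into the local region $|x-y|\le 1$ and the far region $|x-y|>1$. On the far region one uses the crude bound $|h(x)-h(y)|\le 2\|h\|_{L^\infty}$, so that the contribution is controlled by $4\|h\|_{L^\infty}^2 \int_{|x-y|>1} |x-y|^{-1-2b}\, dy = C_b \|h\|_{L^\infty}^2$, since $1+2b>1$. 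On the local region one uses the mean value theorem, $|h(x)-h(y)|\le \|h'\|_{L^\infty}|x-y|$, which gives a contribution bounded by $\|h'\|_{L^\infty}^2 \int_{|x-y|\le 1} |x-y|^{1-2b}\, dy = C_b' \|h'\|_{L^\infty}^2$, finite because $1-2b>-1$. Taking square roots and using $\sqrt{A+B}\le \sqrt{A}+\sqrt{B}$ yields \eqref{Lei} with a constant depending only on $b$.

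For \eqref{Leibh}, I would invoke \eqref{Leib} with $f$ replaced by $h$ and $g$ replaced by $f$ (the roles in that displayed inequality), obtaining
\[
\|\mathcal{D}^b(hf)\| \le \|f\,\mathcal{D}^b h\| + \|h\,\mathcal{D}^b f\|.
\]
In the first term I pull $\mathcal{D}^b h$ out in $L^\infty_x$: $\|f\,\mathcal{D}^b h\| \le \|\mathcal{D}^b h\|_{L^\infty}\|f\|$, which is legitimate precisely because \eqref{Lei} guarantees $\mathcal{D}^b h \in L^\infty$. In the second term I pull $h$ out in $L^\infty_x$: $\|h\,\mathcal{D}^b f\| \le \|h\|_{L^\infty}\|\mathcal{D}^b f\|$. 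Adding the two gives \eqref{Leibh} exactly.

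The only genuinely delicate point is making sure the exponents in the two pieces of the splitting for \eqref{Lei} are integrable, i.e. that $b\in(0,1)$ forces $1+2b>1$ (far region) and $1-2b>-1$ (near region); both hold for every admissible $b$, so there is no borderline case to worry about. Everything else is bookkeeping: the mean value theorem on the local part and the trivial sup bound on the global part. I expect no real obstacle here — this is a standard localization argument — but I would be careful to state that the implicit constants depend on $b$ (and to note that \eqref{Lei} is uniform in $x$, which is what lets it feed into the $L^\infty_x$ estimate in \eqref{Leibh}).
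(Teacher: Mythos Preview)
Your argument is correct: the near/far splitting with the mean value bound on $|x-y|\le 1$ and the trivial $2\|h\|_{L^\infty}$ bound on $|x-y|>1$ gives \eqref{Lei} with constants depending only on $b$, and \eqref{Leibh} then follows immediately from \eqref{Leib} by pulling out the appropriate factors in $L^\infty$. The paper itself does not prove this lemma but simply cites \cite{pastran}; your direct argument is the standard one and there is nothing substantive to compare. One minor remark: since the hypothesis only says $h$ is measurable with $h'\in L^\infty$, it is cleaner to phrase the local estimate as a Lipschitz bound $|h(x)-h(y)|\le \|h'\|_{L^\infty}|x-y|$ (which follows from $h'\in L^\infty$ in the weak sense) rather than invoking the classical mean value theorem, but this is cosmetic.
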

\begin{proof}
See \cite{pastran}.
\end{proof}


Now, we turn our attention to the (dBO) equation. The integral equation associated to the IVP \eqref{bodiss} is given by
\begin{equation}\label{inteq}
\begin{split}
u(t)=U(t)\phi-\int_0^t U(t-\tau)z(\tau)d\tau,
\end{split}
\end{equation}
where $z=\frac12\p_x u^2$, and the semigroup $U(t)$ is defined by
\begin{equation}\label{semigroup}
(U(t)\phi)^{\wedge}(\xi)=e^{-it\xi|\xi|-t|\xi|^\alpha} \ha, \ t\in [0,\infty).
\end{equation}
 Putting 
$\psi(\xi,t)=\e$, where $a\in (0,1]$, the following computations will be useful in our proofs.

\begin{equation}\label{Derksi}
\p_\xi (\psi(\xi,t)\ha)=\Big [-\Big(t(1+a)|\xi|^a \sgn(\xi)+2it|\xi|\Big)\ha+\p_\xi \ha\Big ]\psi(\xi,t),
\end{equation}

\begin{equation}\label{Derksi2}
\begin{split}
\p_\xi^2 (\psi(\xi,t)\ha)=&\Big [\Big(t^2 (1+a)^2|\xi|^{2a} -t(1+a)a|\xi|^{a-1}+4it^2 (1+a)|\xi|^{a+1}\sgn(\xi)\\
                         &- 4t^2 \xi^2 -2it \sgn(\xi) \Big)\ha -\Big( 2t(1+a)|\xi|^a \sgn(\xi)+4it|\xi|\Big)\p_\xi \ha\\
                         & +\p_\xi^2 \ha\Big]\psi(\xi,t),
\end{split}
\end{equation}
and 
\begin{equation}\label{Derksi3}
\begin{split}
\p_\xi^3 (\psi(\xi,t)\ha)=&\Big [\Big(3t^2 a(1+a)^2|\xi|^{2a-1}\sgn(\xi)-t(a^2-1)a|\xi|^{a-2}\sgn(\xi)-8t^2 \xi-4it\delta_\xi\\
                         &- (1+a)^3 t^3 |\xi|^{3a} \sgn(\xi) +2i(1+a)(4+3a) t^2 |\xi|^{a}\sgn(\xi)-\\
                         &-6i(1+a)^2t^3 |\xi|^{2a+1}-12(1+a)t^3 |\xi|^{a+2}\sgn(\xi)+8it^3|\xi|^3-\\
                         &-12t^2\xi\Big)\ha+\\
                         &+\Big(3(1+a)^2 t^2 |\xi|^{2a}+12i(1+a)t^2 |\xi|^{a+1} \sgn(\xi)-3a(1+a)t|\xi|^{a-1} -\\
                         &-12t^2 \xi^2-6it\sgn(\xi)\Big)\p_\xi \ha - \Big(3(1+a)t|\xi|^a \sgn(\xi)+6it|\xi|\Big)\p_\xi^2 \ha+\\
                         &+\p_\xi^3 \ha\Big]\psi(\xi,t).
\end{split}
\end{equation}
In the above $\delta_\xi$ stands the Dirac delta function with respect to $\xi$.

\begin{lemma}
Let $\lambda>0$, then
\begin{equation}\label{lambda}
\|\xi^{2\lambda}\psi(\xi,t)\|_{L^\infty_\xi}\leq c(a,\lambda)t^{-\frac{2\lambda}{a+1}},
\end{equation}
where $$c(a,\lambda)=\left(\frac{(a+1)e}{2\lambda}\right)^{-\frac{2\lambda}{a+1}}.$$
Moreover
\begin{equation}\label{lemasigma}
\||\xi|^\sigma e^{-t|\xi|^{1+a}}\|_{L^2_{\xi}}=c_{\sigma,a} t^{-\frac{2\sigma+1}{2(1+a)}}. 
\end{equation}
\end{lemma}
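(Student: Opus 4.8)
The lemma is a pair of elementary $L^\infty$ and $L^2$ estimates for the symbol $\psi(\xi,t)=e^{-it\xi|\xi|-t|\xi|^{1+a}}$, and the key point is that the oscillatory factor $e^{-it\xi|\xi|}$ has modulus one, so that only the dissipative part $e^{-t|\xi|^{1+a}}$ matters. The plan is therefore to first observe $|\psi(\xi,t)|=e^{-t|\xi|^{1+a}}$ and reduce both estimates to computations involving only $|\xi|$ and $e^{-t|\xi|^{1+a}}$.

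For \eqref{lambda}, I would write $|\xi^{2\lambda}\psi(\xi,t)|=|\xi|^{2\lambda}e^{-t|\xi|^{1+a}}$ and set $s=|\xi|^{1+a}\ge 0$, so that the quantity to bound is $s^{\frac{2\lambda}{1+a}}e^{-ts}$. The function $g(s)=s^{p}e^{-ts}$ with $p=\frac{2\lambda}{1+a}>0$ is maximized at $s=p/t$, giving $g(p/t)=(p/t)^{p}e^{-p}=(p/e)^{p}t^{-p}$. Substituting $p=\frac{2\lambda}{1+a}$ yields exactly $\|\xi^{2\lambda}\psi(\xi,t)\|_{L^\infty_\xi}\le \big(\tfrac{2\lambda}{(1+a)e}\big)^{\frac{2\lambda}{1+a}}t^{-\frac{2\lambda}{1+a}}$, and one checks this constant equals the stated $c(a,\lambda)=\big(\tfrac{(a+1)e}{2\lambda}\big)^{-\frac{2\lambda}{1+a}}$. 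This is just single-variable calculus, so I would present it compactly.

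For \eqref{lemasigma}, again $\||\xi|^\sigma e^{-t|\xi|^{1+a}}\|_{L^2_\xi}^2=\int_{\R}|\xi|^{2\sigma}e^{-2t|\xi|^{1+a}}\,d\xi=2\int_0^\infty \xi^{2\sigma}e^{-2t\xi^{1+a}}\,d\xi$. The substitution $\eta=2t\xi^{1+a}$, i.e. $\xi=(\eta/2t)^{1/(1+a)}$, converts this to a Gamma integral: one gets a constant times $t^{-\frac{2\sigma+1}{1+a}}\int_0^\infty \eta^{\frac{2\sigma+1}{1+a}-1}e^{-\eta}\,d\eta = c\,\Gamma\!\big(\tfrac{2\sigma+1}{1+a}\big)t^{-\frac{2\sigma+1}{1+a}}$, so that taking square roots gives $\||\xi|^\sigma e^{-t|\xi|^{1+a}}\|_{L^2_\xi}=c_{\sigma,a}t^{-\frac{2\sigma+1}{2(1+a)}}$ with $c_{\sigma,a}$ an explicit expression involving $\Gamma\!\big(\tfrac{2\sigma+1}{1+a}\big)$. (Here one uses $\sigma>-1/2$ implicitly for convergence at the origin, which holds in all applications, and is automatic when $\sigma\ge 0$.)

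There is essentially no obstacle here: both parts are one-variable optimization and a change of variables in a definite integral. The only mild care needed is bookkeeping of the constants — matching $(p/e)^p$ against the form in which $c(a,\lambda)$ is written, and keeping track of the factors of $2$ and $1+a$ in the Gamma-function substitution — which I would do by hand but not belabor in the write-up.
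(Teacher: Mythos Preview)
Your proposal is correct and follows essentially the same approach as the paper: both reduce to the dissipative factor via $|\psi(\xi,t)|=e^{-t|\xi|^{1+a}}$, then optimize by one-variable calculus for \eqref{lambda} and use a scaling change of variables for \eqref{lemasigma}. The only cosmetic differences are that the paper optimizes directly in $\xi$ (rather than first substituting $s=|\xi|^{1+a}$) and uses the simpler scaling $\xi=t^{-1/(1+a)}w$ for the $L^2$ identity, leaving the constant as the integral $\int |w|^{2\sigma}e^{-2|w|^{1+a}}\,dw$ rather than expressing it via a Gamma function.
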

\begin{proof}
Since $|\xi^{2\lambda}\psi(\xi,t)|\leq \xi^{2\lambda} e^{-t|\xi|^{1+a}}$$:=\varphi(\xi,t)$, a simple computation gives us
$$\p_\xi \varphi(\xi,t)=\xi^{2\lambda}e^{-t|\xi|^{1+a}}(\frac{2\lambda}{\xi}-(1+a)t|\xi|^a).$$

Then $\p_\xi \varphi(\xi,t)=0$ if, and only if, $|\xi|=|\xi_0|=\big(\frac{2\lambda}{1+a}\big)^{\frac{1}{a+1}}t^{-\frac{1}{a+1}}$. In view of $\varphi(0,t)=0$ and $\varphi(\xi,t)\to 0$ with $\xi \to \infty$, we conclude that
$$|\xi^{2\lambda}\psi(\xi,t)|\leq \varphi(\xi_0,t)=c(a,\lambda)t^{-\frac{2\lambda}{a+1}}.$$

About identity \eqref{lemasigma}, by using the change of variables $\xi=t^{-1/1+a}w$

$$\||\xi|^\sigma e^{-t|\xi|^{1+a}}\|^2_{L^2_\xi}=t^\frac{-2\sigma-1}{1+a} \int w^{2\sigma}e^{-2|w|^{1+a}}dw=c_{\sigma,a}^2 t^\frac{-2\sigma-1}{1+a}.$$
\end{proof}

\begin{proposition}
For all $t>0$ and $\lambda\geq 0$, $U(t)\in \mathcal{B}(H^s(\R),H^{s+\lambda}(\R))$ and
\begin{equation}\label{reg}
\|U(t)\phi\|_{H^{s+\lambda}}\leq c(a,\lambda)(1+t^{-\frac{\lambda}{1+a}})\|\phi\|_{H^s}.
\end{equation}
\end{proposition}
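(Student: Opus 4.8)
The statement to prove is the smoothing estimate
\[
\|U(t)\phi\|_{H^{s+\lambda}}\leq c(a,\lambda)\bigl(1+t^{-\frac{\lambda}{1+a}}\bigr)\|\phi\|_{H^s},\qquad t>0,\ \lambda\geq 0,
\]
for the semigroup defined in \eqref{semigroup}. The plan is to work entirely on the Fourier side. Since $|(U(t)\phi)^{\wedge}(\xi)|=e^{-t|\xi|^\alpha}|\ha(\xi)|$ (the oscillatory factor $e^{-it\xi|\xi|}$ has modulus one) and we are in the regime $\alpha=1+a$ with $a\in(0,1]$, I would write
\[
\|U(t)\phi\|_{H^{s+\lambda}}^2=\int_{\R}\langle\xi\rangle^{2(s+\lambda)}e^{-2t|\xi|^{1+a}}|\ha(\xi)|^2\,d\xi
=\int_{\R}\langle\xi\rangle^{2\lambda}e^{-2t|\xi|^{1+a}}\,\langle\xi\rangle^{2s}|\ha(\xi)|^2\,d\xi,
\]
so it suffices to bound the multiplier $m_t(\xi):=\langle\xi\rangle^{2\lambda}e^{-2t|\xi|^{1+a}}$ in $L^\infty_\xi$ by $c(a,\lambda)^2(1+t^{-\frac{\lambda}{1+a}})^2$ and pull it out of the integral, leaving $\|\phi\|_{H^s}^2$.

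The estimate of $\|m_t\|_{L^\infty_\xi}$ is the only real step. I would split $\langle\xi\rangle^{2\lambda}\lesssim_\lambda 1+|\xi|^{2\lambda}$, so that
\[
\|m_t\|_{L^\infty_\xi}\lesssim_\lambda \sup_\xi e^{-2t|\xi|^{1+a}}+\sup_\xi |\xi|^{2\lambda}e^{-2t|\xi|^{1+a}}\leq 1+\sup_\xi |\xi|^{2\lambda}e^{-2t|\xi|^{1+a}}.
\]
The second supremum is exactly of the type controlled by \eqref{lambda} in the preceding lemma (with $t$ replaced by $2t$, and using that $e^{-2t|\xi|^{1+a}}\le e^{-t|\xi|^{1+a}}$ one even gets it directly from \eqref{lambda}); a one-line optimization in $|\xi|$ — setting the derivative of $|\xi|^{2\lambda}e^{-2t|\xi|^{1+a}}$ to zero at $|\xi_0|^{1+a}=\frac{\lambda}{(1+a)t}$ — gives $\sup_\xi|\xi|^{2\lambda}e^{-2t|\xi|^{1+a}}=c(a,\lambda)^2\,t^{-\frac{2\lambda}{1+a}}$ for an explicit constant. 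Taking square roots and absorbing constants yields $\|m_t\|_{L^\infty_\xi}^{1/2}\le c(a,\lambda)(1+t^{-\frac{\lambda}{1+a}})$, which is the claim. The case $\lambda=0$ is trivial since then $m_t\le 1$.

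There is essentially no obstacle here: the argument is a direct application of \eqref{lambda}, and the only mild subtlety is bookkeeping the constant $c(a,\lambda)$ — one should note that the bound near $t=0$ is driven by the $t^{-\lambda/(1+a)}$ term while for $t$ bounded away from $0$ the constant $1$ dominates, which is exactly why the factor $(1+t^{-\lambda/(1+a)})$ appears rather than a pure power of $t$. It is also worth remarking that the same computation shows $U(t)$ maps into $H^\infty(\R)$ for every $t>0$, i.e. the dissipative term $D^\alpha$ produces the expected parabolic-type smoothing despite the nonlocal dispersive term $\h\p_x^2$, which contributes only a unimodular phase and hence does not interfere with the estimate.
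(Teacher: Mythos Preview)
Your proof is correct and follows exactly the approach the paper intends: the paper's own proof is the single line ``Follows from \eqref{lambda}'', and what you have written is precisely the natural unpacking of that reference --- pass to the Fourier side, use that the dispersive factor is unimodular, split $\langle\xi\rangle^{2\lambda}\lesssim_\lambda 1+|\xi|^{2\lambda}$, and invoke the pointwise bound \eqref{lambda} to control the second piece. There is nothing to add.
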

\begin{proof}
Follows from \eqref{lambda}.
\end{proof}

\begin{proposition}
Let $u$ solution of IVP \eqref{bodiss}, then
\begin{equation}\label{reg2}
u\in C((0,T]; H^\infty), 
\end{equation}
and 
\begin{equation}\label{reg1} 
\|u(t)\|_{H^{s+\lambda}}\leq c(a,\lambda,T)t^{-\frac{\lambda}{1+a}}\|\phi\|_{H^s}, \ t\in (0,T], \ s>1/2, \ 0\leq \lambda<a+1.
\end{equation}
\end{proposition}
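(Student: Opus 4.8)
The plan is to run a parabolic‑smoothing bootstrap on the Duhamel formula \eqref{inteq}, using the regularizing estimate \eqref{reg} for $U(t)$ together with the fact that the nonlinear term $z=\frac12\partial_x u^2$ costs only one derivative. The starting point is a solution $u\in C([0,T];H^s)$ with $s>1/2$, so that $\|u(\tau)\|_{L^\infty}\lesssim\|u(\tau)\|_{H^s}$; write $C_0:=\|u\|_{C([0,T];H^s)}$, which by the well‑posedness bounds is controlled by $\|\phi\|_{H^s}$ and $T$. A Moser‑type estimate gives $\|u^2(\tau)\|_{H^\sigma}\lesssim\|u(\tau)\|_{H^\sigma}\|u(\tau)\|_{L^\infty}\lesssim C_0\,\|u(\tau)\|_{H^\sigma}$ for every $\sigma\ge s$, hence $\|z(\tau)\|_{H^{\sigma-1}}\lesssim C_0\|u(\tau)\|_{H^\sigma}$; the point is that one of the two copies of $u$ can always be parked in $L^\infty\subset H^s$ at the fixed cost $C_0$.

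\emph{First step.} For $0\le\lambda<a$, combining \eqref{inteq} with \eqref{reg} (applied to $U(t)\phi$, and with a gain of $\lambda+1$ derivatives to $U(t-\tau)z(\tau)$, starting from $H^{s-1}$) gives
$$\|u(t)\|_{H^{s+\lambda}}\lesssim_{a,\lambda}(1+t^{-\frac{\lambda}{1+a}})\|\phi\|_{H^s}+C_0\int_0^t\big(1+(t-\tau)^{-\frac{\lambda+1}{1+a}}\big)\|u(\tau)\|_{H^s}\,d\tau.$$
Since $\lambda<a$ makes $\frac{\lambda+1}{1+a}<1$, the $\tau$-integral is $\lesssim_T C_0^2$, and because $t^{-\lambda/(1+a)}\ge T^{-\lambda/(1+a)}$ on $(0,T]$ this yields \eqref{reg1} for every $\lambda\in[0,a)$.

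\emph{Iteration.} Suppose \eqref{reg1} has been established for all exponents in some interval $[0,\Lambda_k)$, i.e. $\|u(\tau)\|_{H^{s+\mu}}\lesssim\tau^{-\mu/(1+a)}$ on $(0,T]$ for $\mu<\Lambda_k$. Given $\lambda$ with $\Lambda_k\le\lambda<\Lambda_k+a$, split $\int_0^t=\int_0^{t/2}+\int_{t/2}^t$ in \eqref{inteq}. On $[0,t/2]$ one has $t-\tau\ge t/2$ and $\|z(\tau)\|_{H^{s-1}}\lesssim C_0^2$, so that part contributes $\lesssim_T t^{-\lambda/(1+a)}C_0^2$. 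On $[t/2,t]$ one has $\tau\in[t/2,t]$; choosing $\mu\in(\max\{0,\lambda-a\},\Lambda_k)$ — possible exactly because $\lambda<\Lambda_k+a$ — gives $\|z(\tau)\|_{H^{s+\mu-1}}\lesssim C_0\|u(\tau)\|_{H^{s+\mu}}\lesssim t^{-\mu/(1+a)}$, and \eqref{reg} with a gain of $\lambda-\mu+1$ derivatives yields
$$\int_{t/2}^t\|U(t-\tau)z(\tau)\|_{H^{s+\lambda}}\,d\tau\lesssim t^{-\frac{\mu}{1+a}}\int_{t/2}^t(t-\tau)^{-\frac{\lambda-\mu+1}{1+a}}\,d\tau\lesssim_T t^{-\frac{\mu}{1+a}}\,t^{\frac{a-\lambda+\mu}{1+a}}=t^{\frac{a-\lambda}{1+a}}\lesssim_T t^{-\frac{\lambda}{1+a}},$$
the integral converging because $\lambda-\mu<a$. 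Adding the linear term from \eqref{reg}, one obtains \eqref{reg1} for all $\lambda<\Lambda_k+a$, so $\Lambda_{k+1}=\Lambda_k+a$; since $\Lambda_1=a$ and $a>0$, after finitely many steps every $\lambda<a+1$ (indeed every $\lambda\ge 0$) is reached, which proves \eqref{reg1}. The statement $u\in C((0,T];H^\infty)$ in \eqref{reg2} then follows from the Duhamel representation and dominated convergence, all the relevant norms being now finite.

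\emph{Main obstacle.} The delicate point is the single‑step integrability threshold: convergence of $\int_0^t(t-\tau)^{-(\lambda-\mu+1)/(1+a)}\,d\tau$ forces $\lambda-\mu<a$, so at most $a$ units of regularity are gained per iteration, while one must simultaneously verify that the power of $t$ produced at each step is never worse than $t^{-\lambda/(1+a)}$. The latter works only thanks to the observation that $\|z(\tau)\|_{H^{s+\mu-1}}\lesssim C_0\|u(\tau)\|_{H^{s+\mu}}$ rather than $\lesssim\|u(\tau)\|_{H^{s+\mu}}^2$ (otherwise the power would degrade). Keeping these two constraints compatible through the iteration, and tracking the dependence of the constants on $a,\lambda,T$ and $\|\phi\|_{H^s}$, is the only real work; everything else is routine.
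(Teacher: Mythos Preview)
Your argument is correct but follows a different route from the paper. The paper proves \eqref{reg1} in a single step: it applies \eqref{reg} with a gain of exactly one derivative to the Duhamel term, estimates $\|u^2(\tau)\|_{H^{s+\lambda}}\lesssim \|u(\tau)\|_{L^\infty}\|u(\tau)\|_{H^{s+\lambda}}\le M_T\|u(\tau)\|_{H^{s+\lambda}}$ via Kato--Ponce, and obtains the linear Volterra inequality
\[
\|u(t)\|_{H^{s+\lambda}}\lesssim_{a,\lambda,T} t^{-\frac{\lambda}{1+a}}\|\phi\|_{H^s}+M_T\int_0^t (t-\tau)^{-\frac{1}{1+a}}\|u(\tau)\|_{H^{s+\lambda}}\,d\tau,
\]
which is closed by the singular Gronwall lemma of Henry (this is also why the restriction $\lambda<1+a$ appears: it makes the forcing term $t^{-\lambda/(1+a)}$ locally integrable, as required by that lemma). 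Your proof instead avoids Gronwall entirely by an explicit bootstrap with the $[0,t/2]\cup[t/2,t]$ splitting, gaining at most $a$ derivatives per pass. The paper's route is shorter and uses a ready-made tool; yours is more elementary, is self-contained (it does not need to assume the qualitative smoothing \eqref{reg2} beforehand to make sense of $\|u(\tau)\|_{H^{s+\lambda}}$ in the integrand), and in fact yields the estimate for every $\lambda\ge 0$, not only $\lambda<1+a$. Both approaches share the same implicit caveat that the constant ultimately depends on $\|\phi\|_{H^s}$ through $M_T$ (respectively $C_0$), despite the notation $c(a,\lambda,T)$ in the statement.
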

\begin{proof}
The proof of \eqref{reg2} follows by a well-known bootstrapping argument. We will prove only \eqref{reg1}. By using the integral equation \eqref{inteq}, \eqref{reg} and \cite[Lemma X4]{KP}
\begin{equation} 
\begin{split}
\|u(t)\|_{H^{s+\lambda}}\lesssim_{a,\lambda}&(1+t^{-\frac{\lambda}{1+a}})\|\phi\|_{H^s}+\int_0^t \big(1+(t-\tau)^{-\frac{1}{1+a}}\big)\|\p_x u(\tau)^2\|_{H^{s+\lambda-1}}d\tau\\
\lesssim_{a,\lambda}& (t^{\frac{\lambda}{1+a}}+1)t^{-\frac{\lambda}{1+a}}\|\phi\|_{H^s}+\\
&+\int_0^t \big((t-\tau)^{\frac{1}{1+a}}+1\big)(t-\tau)^{-\frac{1}{1+a}}\|u^2(\tau)\|_{H^{s+\lambda}} d\tau\\
\lesssim_{a,\lambda,T}&t^{-\frac{\lambda}{1+a}}\|\phi\|_{H^s}+\int_0^t (t-\tau)^{-\frac{1}{1+a}}\|u(\tau)\|_{L^\infty_x}\|u(\tau)\|_{H^{s+\lambda}} d\tau\\
\lesssim_{a,\lambda,T}&{t^{-\frac{\lambda}{1+a}}}\|\phi\|_{H^s}+\int_0^t (t-\tau)^{-\frac{1}{1+a}}\|u(\tau)\|_{H^s}\|u(\tau)\|_{H^{s+\lambda}} d\tau\\
\lesssim_{a,\lambda,T}&{t^{-\frac{\lambda}{1+a}}}\|\phi\|_{H^s}+M_T\int_0^t (t-\tau)^{-\frac{1}{1+a}}\|u(\tau)\|_{H^{s+\lambda}} d\tau,
\end{split}
\end{equation}
where $M_T=\sup_{[0,T]}\|u(t)\|_{H^s}$.
Then we conclude the proof by an application of a version of Gronwall's Lemma, see \cite[section 1.2.1]{henry}. 
\end{proof}
\begin{lemma}
 Let $a\in (0,1)$ and $\lambda\in \mathbb{Z}^{+}$ then
\begin{equation}
\|\mathcal{D}_\xi^{b}(\psi(\xi,t) |\xi|^\lambda \hat{f})\|\leq c_{a,\lambda} (t^{-\frac{\lambda}{1+a}}+t^{\frac{1-\lambda}{1+a}}+t^{\frac{a-\lambda}{1+a}})\|f\|+t^{-\frac{\lambda}{1+a}}\||x|^b f\|,\label{psi1}
\end{equation} 
moreover \eqref{psi1} still holds if $|\xi|^{\lambda}$ is exchanged by $|\xi|^{\lambda_1}\xi^{\lambda_2},\lambda=\lambda_1+\lambda_2, \lambda_1,\lambda_2\in \mathbb{Z}^{+}$.
\end{lemma}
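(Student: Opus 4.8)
The plan is to write $\psi(\xi,t)|\xi|^{\lambda}\hat f=h(\xi)\,\hat f(\xi)$ with the Fourier multiplier $h(\xi):=|\xi|^{\lambda}\psi(\xi,t)$, fix $t>0$ and $b\in(0,1)$, and apply the Leibniz-type bound \eqref{Leibh}:
\begin{equation*}
\|\mathcal{D}_\xi^{b}(h\,\hat f)\|\leq \|\mathcal{D}_\xi^{b}h\|_{L^{\infty}_\xi}\,\|\hat f\|+\|h\|_{L^{\infty}_\xi}\,\|\mathcal{D}_\xi^{b}\hat f\|.
\end{equation*}
Two factors are immediate: $\|\hat f\|=c\,\|f\|$ by Plancherel, and $\|\mathcal{D}_\xi^{b}\hat f\|=c\,\||x|^{b}f\|$. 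The latter follows because, for any $g\in L^{2}$, the difference quotient defining $\mathcal{D}^{b}$ satisfies $\|\mathcal{D}^{b}g\|_{L^{2}}^{2}=c_{b}\,\|\,|\cdot|^{b}\widehat g\,\|_{L^{2}}^{2}$ (Plancherel in the translation variable, see also Theorem \ref{stein}); taking $g=\hat f$ and using $\widehat{\hat f}(\zeta)=2\pi f(-\zeta)$ gives the claim. For the factor $\|h\|_{L^{\infty}_\xi}$, estimate \eqref{lambda} with parameter $\lambda/2$ yields $\||\xi|^{\lambda}\psi(\xi,t)\|_{L^{\infty}_\xi}\lesssim_{a,\lambda}t^{-\lambda/(1+a)}$, which already accounts for the coefficient in front of $\||x|^{b}f\|$ in \eqref{psi1}.

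It remains to control $\|\mathcal{D}_\xi^{b}h\|_{L^{\infty}_\xi}$. For fixed $t>0$ one has $h\in W^{1,\infty}(\R)$: indeed $\psi(\cdot,t)\in C^{1}$ since the phase $\xi|\xi|$ is $C^{1}$ and $1+a>1$, while $|\xi|^{\lambda}$ is Lipschitz for $\lambda\ge1$; hence \eqref{Lei} applies and $\|\mathcal{D}_\xi^{b}h\|_{L^{\infty}_\xi}\lesssim \|h\|_{L^{\infty}_\xi}+\|h'\|_{L^{\infty}_\xi}$. Differentiating and using $\p_\xi\psi(\xi,t)=-\big((1+a)t|\xi|^{a}\sgn(\xi)+2it|\xi|\big)\psi(\xi,t)$ (the case $\hat\phi\equiv1$ of \eqref{Derksi}),
\begin{equation*}
h'(\xi)=\lambda|\xi|^{\lambda-1}\sgn(\xi)\,\psi(\xi,t)-(1+a)t\,|\xi|^{\lambda+a}\sgn(\xi)\,\psi(\xi,t)-2it\,|\xi|^{\lambda+1}\psi(\xi,t).
\end{equation*}
Now apply \eqref{lambda} to each term: the first is $\lesssim_{a,\lambda}t^{-(\lambda-1)/(1+a)}=t^{(1-\lambda)/(1+a)}$ (trivially $\leq1$ when $\lambda=1$); the second is $\lesssim_{a}t\cdot t^{-(\lambda+a)/(1+a)}=t^{(1-\lambda)/(1+a)}$; the third is $\lesssim t\cdot t^{-(\lambda+1)/(1+a)}=t^{(a-\lambda)/(1+a)}$. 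Therefore $\|\mathcal{D}_\xi^{b}h\|_{L^{\infty}_\xi}\lesssim_{a,\lambda}t^{-\lambda/(1+a)}+t^{(1-\lambda)/(1+a)}+t^{(a-\lambda)/(1+a)}$, and inserting this together with $\|h\|_{L^{\infty}_\xi}\lesssim_{a,\lambda}t^{-\lambda/(1+a)}$ into the Leibniz bound gives \eqref{psi1}.

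For the last assertion, write $|\xi|^{\lambda_1}\xi^{\lambda_2}=|\xi|^{\lambda}(\sgn\xi)^{\lambda_2}$ with $\lambda=\lambda_1+\lambda_2$. Since $|(\sgn\xi)^{\lambda_2}|=1$ a.e., the $L^{\infty}$ bound on the corresponding multiplier is the same as for $h$; and in its derivative the only new term is $|\xi|^{\lambda}\p_\xi(\sgn\xi)^{\lambda_2}$, which is zero as a distribution because $\p_\xi(\sgn\xi)^{\lambda_2}$ is supported at the origin where $|\xi|^{\lambda}$ vanishes (recall $\lambda\ge1$). Hence the multiplier is again in $W^{1,\infty}(\R)$ and the term-by-term estimates above go through unchanged. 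The step I expect to be the most delicate is the explicit computation of $h'$ together with the bookkeeping of the powers of $|\xi|$ and of $t$, and the verification that $h\in W^{1,\infty}$ so that Lemma \ref{Leibnitz} may be invoked; the rest is a direct combination of \eqref{Leibh}, \eqref{Lei} and \eqref{lambda}.
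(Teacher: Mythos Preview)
Your proof is correct and follows essentially the same route as the paper's: set $h=|\xi|^{\lambda}\psi$, apply the Leibniz-type estimate \eqref{Leibh}, bound $\|\mathcal{D}^{b}_{\xi}h\|_{L^{\infty}}$ via \eqref{Lei} by $\|h\|_{L^{\infty}}+\|h'\|_{L^{\infty}}$, and then read off the powers of $t$ from \eqref{lambda} after differentiating $h$ with the formula \eqref{dksi} for $\partial_{\xi}\psi$. The paper presents exactly this chain (first invoking \eqref{Leib} and then passing to sup norms, which is how \eqref{Leibh} is obtained), only more tersely; your write-up adds the explicit justification that $h\in W^{1,\infty}$, the identity $\|\mathcal{D}^{b}_{\xi}\hat f\|=c_{b}\||x|^{b}f\|$, and a clean treatment of the $|\xi|^{\lambda_{1}}\xi^{\lambda_{2}}$ variant, all of which are implicit in the paper.
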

\begin{proof}
First, we see that
\begin{equation}\label{dksi}
\p_\xi \psi=-\big(t(1+a)|\xi|^a \sgn(\xi)+2it|\xi|\big)\psi.
\end{equation}
Therefore using \eqref{Leib}, \eqref{lambda} and \eqref{Lei}
\begin{equation}
\begin{split}
\|\mathcal{D}^b_\xi (\psi|\xi|^\lambda \hat{f})\|&\lesssim \|\mathcal{D}^b_\xi (\psi |\xi|^\lambda)\hat f\|+\|\psi |\xi|^\lambda \mathcal{D}^b_\xi \hat f\|\\
&\lesssim \|\mathcal{D}^b_\xi (\psi |\xi|^\lambda)\|_\infty \|f\|+\|\psi |\xi|^\lambda\|_\infty\|\mathcal{D}^b_\xi \hat f\|\\
&\lesssim_b (\||\xi|^\lambda \psi\|_\infty + \|\p_\xi(|\xi|^\lambda \psi)\|_\infty)\|f\|+\||\xi|^\lambda \psi\|_\infty \|\mathcal{D}^b_\xi \hat f\|\\
&\lesssim_b (t^{-\frac{\lambda}{1+a}}+t^{\frac{1-\lambda}{1+a}}+t^{\frac{a-\lambda}{1+a}})\|f\|+t^{-\frac{\lambda}{1+a}}\||x|^b f\|.
\end{split}
\end{equation}
The proof is similar when $|\xi|^\lambda$ is substituted by $|\xi|^{\lambda_1} \xi^{\lambda_2}$.

\end{proof}



In our estimates we need of the following result about the Hilbert transform in weighted spaces.

\begin{lemma}\label{boundhilbert}
Let $-1/2<\nu<1/2$, then the Hilbert transform $\mathcal{H}$ is a bounded operator in $L^{2}(|x|^\nu dx),$ i.e.
\begin{eqnarray}\label{bound}
\|\mathcal{H}f|x|^\nu\| \lesssim \|f|x|^\nu \|.
\end{eqnarray}
\end{lemma}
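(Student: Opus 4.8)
The statement to prove is the boundedness of the Hilbert transform on $L^2(|x|^\nu\,dx)$ for $-1/2<\nu<1/2$. This is the classical Helson–Szegő / Hunt–Muckenhoupt–Wheeden result: the Hilbert transform is bounded on $L^2(w)$ precisely when $w$ is an $A_2$ weight, and the power weights $w(x)=|x|^\nu$ lie in $A_2$ if and only if $-1<\nu<1$. Here we only need the sufficiency for the narrower range $-1/2<\nu<1/2$, so there is ample room.

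The plan is as follows. First I would simply \emph{cite} the Hunt–Muckenhoupt–Wheeden theorem (or, in the self-contained spirit of the paper, the $A_p$ theory as in Stein's book or Grafakos), since it is standard; this is surely what the author intends, given the pattern of the preceding lemmas whose proofs are one-line references. Concretely: verify the Muckenhoupt $A_2$ condition
\[
\sup_{I}\ \left(\frac{1}{|I|}\int_I |x|^\nu\,dx\right)\left(\frac{1}{|I|}\int_I |x|^{-\nu}\,dx\right)<\infty
\]
over all intervals $I\subset\R$, which is an elementary computation separating the cases where $I$ is far from the origin (then $|x|^{\pm\nu}$ is essentially constant on $I$) and where $I$ contains or is near $0$ (then one integrates the power directly, using $|\nu|<1$). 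Then invoke the weighted norm inequality to conclude $\|\mathcal Hf\|_{L^2(|x|^\nu dx)}\lesssim\|f\|_{L^2(|x|^\nu dx)}$, which is exactly \eqref{bound} after writing $\|\mathcal Hf\,|x|^{\nu/2}\|_{L^2}$ in place of the paper's $\|\mathcal Hf\,|x|^\nu\|$ (the exponent in the displayed inequality should be read as a weight exponent, i.e. the estimate is $\| \mathcal Hf\, |x|^{\nu}\|_{L^2} \lesssim \| f\,|x|^{\nu}\|_{L^2}$ with $|x|^{2\nu}$ the Muckenhoupt weight, which is in $A_2$ for $-1/2<\nu<1/2$).

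Alternatively, for a genuinely self-contained argument one can avoid the full $A_2$ machinery: by duality and $\epsilon$-interpolation it suffices to prove the endpoint-type bound, and for power weights there is a classical direct proof using the Mellin transform — conjugating $\mathcal H$ by the dilation-invariant measure $|x|^\nu\,dx$ turns it into a Fourier multiplier on $L^2(\R,dx)$ after the substitution $x=e^t$, with an explicitly computable bounded symbol (a combination of $\coth$ and $\tanh$ of $\pi(s+i\nu/2)$-type factors), finite precisely when $|\nu|<1$. A third route, matching the ``Stein derivative'' philosophy already in play, is Pisier's/Stein's interpolation with change of measure (Theorem in \cite{HBS}, which the paper cites elsewhere): $\mathcal H$ is bounded on $L^2(dx)$ and, trivially, maps $L^2(|x|\,dx)\to L^2(|x|\,dx)$ fails, so instead one interpolates between $L^2$ and a weighted space with a larger admissible exponent obtained from the unweighted $L^p$ boundedness of $\mathcal H$ together with Stein–Weiss — but this only reaches $|\nu|<1/2$, which, pleasantly, is exactly the range asserted, suggesting this is the intended proof.

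The only genuine obstacle is bookkeeping: confirming that the weight exponent appearing in \eqref{bound} is the one that lands in the correct $A_2$ range (the factor-of-two between ``weight $|x|^{2\nu}$'' and ``multiplying by $|x|^\nu$''), and, if the self-contained Stein–Weiss route is taken, checking the admissibility conditions of the change-of-measure interpolation theorem (the measures $|x|^{\pm}$-powers must be compatible on the interpolation scale and the exponents must satisfy the Stein–Weiss constraints). Everything else is routine. I would therefore write: ``The weight $|x|^{2\nu}$ belongs to the Muckenhoupt class $A_2(\R)$ whenever $-1/2<\nu<1/2$; hence the Hilbert transform is bounded on $L^2(|x|^{2\nu}\,dx)$ by the Hunt–Muckenhoupt–Wheeden theorem (see \cite{HBS}), which is \eqref{bound}.''
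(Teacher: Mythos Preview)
Your proposal is correct and matches the paper's approach exactly: the paper's proof is simply the one-line reference ``See \cite{Hunt&Munk}, in which the more general version can be found,'' i.e.\ precisely the Hunt--Muckenhoupt--Wheeden citation you recommend, and your weight bookkeeping (that $|x|^{2\nu}\in A_2$ iff $|\nu|<1/2$) is the right reading of \eqref{bound}. One minor slip: in your final sentence you point the citation to \cite{HBS} (Bergh--L\"ofstr\"om) rather than to the Hunt--Muckenhoupt--Wheeden paper.
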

\begin{proof}
See \cite{Hunt&Munk}, in which the more general version can be found.
\end{proof}



The next proposition is a key ingredient to obtain our estimates. It will be useful in the proof our main results. 
\begin{proposition}\label{Dstein}
For any $\theta \in (0,1)$ and $\gamma >0,$
$$\mathcal{D}^\theta (|\xi|^\gamma \chi (\xi))(\eta) \sim \left\{\begin{array}{lcc}
c|\eta|^{\gamma -\theta}+c_1,& \quad \gamma \not= \theta, |\eta|\ll 1, \\
c(-\ln |\eta|)^{1/2}, & \quad \gamma=\theta, |\eta|\ll 1,\\
\frac{c}{|\eta|^{1/2+\theta}}, & \quad  |\eta|\gg 1,
\end{array}\right.
$$
with $\mathcal{D}^\theta (|\xi|^\gamma \chi (\xi))(\cdot)$ continuous in $\eta \in \R-\{0\}.$ In particular, one has that
$$\mathcal{D}^\theta (|\xi|^\gamma \chi (\xi))(\cdot)\in L^{2}(\R) \ \mbox{if and only if} \ \theta< \gamma +1/2.$$
In a similar fashion
\begin{equation}\label{Dstein1}
\mathcal{D}^\theta (|\xi|^\gamma \sgn(\xi) \chi (\xi))(\cdot)\in L^{2}(\R) \ \mbox{if and only if} \ \theta< \gamma +1/2.
\end{equation}
\end{proposition}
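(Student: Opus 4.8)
The plan is to compute the Stein derivative $\mathcal{D}^\theta g(\eta) = \big(\int_{\R} |g(\eta)-g(\zeta)|^2 |\eta-\zeta|^{-1-2\theta}\,d\zeta\big)^{1/2}$ directly for $g(\xi) = |\xi|^\gamma\chi(\xi)$, splitting the integral according to the size of $|\eta|$ and the location of $\zeta$. First I would fix $\eta$ with $|\eta|\ll 1$ and write the defining integral as a sum of three pieces: (a) $|\zeta|$ comparable to $|\eta|$ (say $|\zeta-\eta|\le |\eta|/2$), (b) $|\zeta|$ small but not comparable to $|\eta|$, and (c) $|\zeta|\gtrsim 1$. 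On piece (a) one uses the mean value / Lipschitz-type bound $|g(\eta)-g(\zeta)|\lesssim |\eta|^{\gamma-1}|\eta-\zeta|$ (valid since $\chi\equiv 1$ near the origin and $|\xi|^\gamma$ is $C^1$ away from $0$ with derivative $\sim|\xi|^{\gamma-1}$), so the contribution is $\lesssim |\eta|^{2\gamma-2}\int_{|\eta-\zeta|\le|\eta|/2}|\eta-\zeta|^{1-2\theta}d\zeta \sim |\eta|^{2\gamma-2\theta}$ when $\theta<1$. On piece (c) the difference $g(\eta)-g(\zeta)$ is $O(1)$ and $|\eta-\zeta|\sim|\zeta|$, giving a convergent constant contribution $\sim c_1$. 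The heart of the matter is piece (b): here $g(\zeta)\sim|\zeta|^\gamma$ and $|\eta-\zeta|\sim\max(|\eta|,|\zeta|)$, and one is led to estimate
\begin{equation*}
\int_{|\eta|\lesssim|\zeta|\lesssim 1}\frac{|\zeta|^{2\gamma}}{|\zeta|^{1+2\theta}}\,d\zeta \;+\; |\eta|^{2\gamma}\int_{|\zeta|\lesssim|\eta|}\frac{d\zeta}{|\eta|^{1+2\theta}}\cdot(\text{angular factor}),
\end{equation*}
whose first term is $\sim |\eta|^{2(\gamma-\theta)}$ if $\gamma\ne\theta$ (the integral $\int_{|\eta|}^{1}s^{2\gamma-1-2\theta}ds$), is $\sim -\ln|\eta|$ if $\gamma=\theta$, and always carries a harmless $O(1)$ remainder from the upper endpoint. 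Combining the three pieces gives the stated behavior for $|\eta|\ll 1$; matching the lower bounds requires observing that near $\eta$ the integrand is genuinely of size $|\eta|^{2\gamma-2\theta}$ (resp. that the logarithmically divergent region cannot be cancelled), so one gets $\sim$ and not merely $\lesssim$.

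For $|\eta|\gg 1$, the function $g$ vanishes for $|\zeta|\ge 2$, so $g(\eta)=0$ and $\mathcal{D}^\theta g(\eta)^2 = \int_{|\zeta|\le 2}|g(\zeta)|^2|\eta-\zeta|^{-1-2\theta}d\zeta$; since $|\eta-\zeta|\sim|\eta|$ on the support, this is $\sim |\eta|^{-1-2\theta}\|g\|_{L^2}^2$, i.e. $\mathcal{D}^\theta g(\eta)\sim c|\eta|^{-1/2-\theta}$, giving the third regime. Continuity of $\eta\mapsto\mathcal{D}^\theta g(\eta)$ on $\R\setminus\{0\}$ follows from dominated convergence, using the explicit bounds just obtained to produce an integrable majorant uniformly on compact subsets of $\R\setminus\{0\}$. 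The $L^2(\R)$ claim is then immediate: near $0$, $|\mathcal{D}^\theta g|^2\sim |\eta|^{2(\gamma-\theta)}$ (or $|\ln|\eta||$ when $\gamma=\theta$) plus a constant, which is integrable on $\{|\eta|\ll1\}$ precisely when $2(\gamma-\theta)>-1$, i.e. $\theta<\gamma+1/2$ (the logarithmic case $\gamma=\theta$ also being integrable, consistent with $\theta=\gamma<\gamma+1/2$); near infinity $|\mathcal{D}^\theta g|^2\sim|\eta|^{-1-2\theta}$ is always integrable since $\theta>0$. Thus $\mathcal{D}^\theta g\in L^2(\R)\iff\theta<\gamma+1/2$.

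Finally, for the signed version $g_1(\xi)=|\xi|^\gamma\sgn(\xi)\chi(\xi)$ one repeats the same decomposition. The only new feature is that across $\zeta=0$ the sign jumps, so in the region where $\eta$ and $\zeta$ have opposite signs one has $|g_1(\eta)-g_1(\zeta)|=|\eta|^\gamma+|\zeta|^\gamma$ rather than a difference; but $|\eta-\zeta|=|\eta|+|\zeta|$ there as well, so the same power-counting as in piece (b) applies and produces the identical asymptotics $|\eta|^{\gamma-\theta}$, $(-\ln|\eta|)^{1/2}$, $|\eta|^{-1/2-\theta}$. Hence \eqref{Dstein1} follows by the same integrability analysis. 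The main obstacle is the careful bookkeeping in piece (b) for $|\eta|\ll1$ — isolating the exact leading term $|\eta|^{\gamma-\theta}$ (and its degeneration to $(-\ln|\eta|)^{1/2}$ at $\gamma=\theta$) while showing the endpoint contributions only add a bounded constant $c_1$, and then upgrading the upper bound to a genuine two-sided estimate.
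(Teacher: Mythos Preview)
Your plan is correct and is precisely the standard direct computation one carries out to establish these asymptotics. Note, however, that the paper does not supply its own proof of this proposition: it simply refers the reader to Proposition~2.9 of \cite{FLP1}. Your decomposition into the near-diagonal region $|\zeta-\eta|\le|\eta|/2$, the intermediate region $|\zeta|\lesssim 1$ away from $\eta$, and the far region $|\zeta|\gtrsim 1$, together with the mean-value bound on the first piece and the elementary scaling $\int_{|\eta|}^{1}s^{2\gamma-1-2\theta}\,ds$ on the second, is exactly the argument one finds in that reference; the treatment of $|\eta|\gg 1$ and of the signed variant $|\xi|^\gamma\sgn(\xi)\chi(\xi)$ is likewise the expected one. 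In short, you are reconstructing the cited proof rather than offering a different route, and nothing in your outline is missing or would fail.
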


\begin{proof}
See Proposition 2.9 in \cite{FLP1}.
\end{proof}

\begin{proposition}\label{DsteinL2}
Let $\gamma \in [0,1/2)$, then

\begin{equation}\label{Dstein2}
\mathcal{D}^\gamma (|\xi|^{\gamma-1/2}\chi (\xi))\notin L^{2}(\R).
\end{equation}
\end{proposition}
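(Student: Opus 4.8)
The plan is to use the asymptotics of the Stein derivative $\mathcal{D}^\gamma$ acting on a homogeneous symbol multiplied by a cutoff, which are already recorded in Proposition \ref{Dstein}. Here we have $\gamma \in [0,1/2)$ and we are applying $\mathcal{D}^\gamma$ (so ``$\theta = \gamma$'' in the notation of Proposition \ref{Dstein}) to $|\xi|^{\gamma - 1/2}\chi(\xi)$ (so the homogeneity degree is ``$\gamma_{\mathrm{old}} = \gamma - 1/2$''). The relevant borderline relation is $\theta = \gamma_{\mathrm{old}} + 1/2$, i.e. $\gamma = (\gamma - 1/2) + 1/2$, which holds identically. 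Thus this is precisely the critical case where membership in $L^2$ fails, and the point is to extract the quantitative statement from the critical asymptotics near $\eta = 0$.

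First I would observe that $\chi \equiv 1$ on $(-1,1)$, and that for $|\eta| \ll 1$ the behaviour of $\mathcal{D}^\gamma(|\xi|^{\gamma-1/2}\chi(\xi))(\eta)$ is governed by the local singularity of $|\xi|^{\gamma-1/2}$ at the origin. Splitting
\[
\mathcal{D}^\gamma\big(|\xi|^{\gamma-1/2}\chi(\xi)\big)(\eta)^2 = \int_{\R}\frac{\big||\eta|^{\gamma-1/2}\chi(\eta)-|\xi|^{\gamma-1/2}\chi(\xi)\big|^2}{|\eta-\xi|^{1+2\gamma}}\,d\xi,
\]
one isolates the contribution of $|\xi|$ comparable to $|\eta|$ together with the contribution from $|\xi| \sim 1$ (where the cutoff turns on); a scaling argument $\xi = |\eta| w$ in the first piece shows that the leading term is of size $|\eta|^{2(\gamma-1/2)}\cdot|\eta|^{-2\gamma}\cdot|\eta| = |\eta|^{-1}$ times a logarithmically divergent $w$-integral, exactly as in the $\gamma = \theta$ line of Proposition \ref{Dstein} but now with an extra factor $|\eta|^{-1}$ coming from the stronger singularity $\gamma - 1/2 < 0$. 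Concretely I expect
\[
\mathcal{D}^\gamma\big(|\xi|^{\gamma-1/2}\chi(\xi)\big)(\eta) \sim c\,|\eta|^{-1/2}\big(-\ln|\eta|\big)^{1/2}, \qquad |\eta| \ll 1,
\]
and then
\[
\int_{|\eta|<1/2}\frac{-\ln|\eta|}{|\eta|}\,d\eta = \infty
\]
gives $\mathcal{D}^\gamma(|\xi|^{\gamma-1/2}\chi(\xi)) \notin L^2(\R)$.

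Alternatively, and perhaps more cleanly, I would avoid re-deriving asymptotics and instead argue by a direct lower bound: for $|\eta|$ small, restrict the defining integral to the region $\tfrac12|\eta| \le |\xi| \le 2|\eta|$ with $\mathrm{sgn}\,\xi = \mathrm{sgn}\,\eta$, where $\chi \equiv 1$, and use $\big||\eta|^{\gamma-1/2} - |\xi|^{\gamma-1/2}\big| \gtrsim |\eta|^{\gamma-1/2}$ on a suitable sub-region together with $|\eta - \xi| \lesssim |\eta|$ to get $\mathcal{D}^\gamma(\cdots)(\eta)^2 \gtrsim |\eta|^{2\gamma-1}\cdot |\eta|^{-(1+2\gamma)}\cdot|\eta| = |\eta|^{-1}$; integrating $|\eta|^{-1}$ near zero already diverges, so the logarithmic refinement is not even needed for non-membership. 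I would then note continuity away from the origin (from Proposition \ref{Dstein}) so the only obstruction to $L^2$ is at $\eta = 0$, and the divergence there finishes the proof.

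The main obstacle I anticipate is being careful with the lower bound on $\big||\eta|^{\gamma-1/2} - |\xi|^{\gamma-1/2}\big|$: since the map $r \mapsto r^{\gamma-1/2}$ is decreasing and its difference quotient degenerates when $\xi$ is too close to $\eta$, I must choose the sub-region (say $|\xi| \in [\tfrac12|\eta|, \tfrac34|\eta|]$) so that the difference is genuinely of order $|\eta|^{\gamma-1/2}$ while $|\eta-\xi|$ stays of order $|\eta|$; this is routine but is the one place where a careless estimate would collapse the argument. Everything else — the scaling change of variables, discarding the harmless $|\xi|\sim 1$ transition region, and the final divergent one-dimensional integral — is elementary.
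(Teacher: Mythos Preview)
Your second, direct lower-bound approach is correct and is essentially what the paper does: the paper restricts the defining integral to the region $y\in(0,\eta)$ (equivalently $\xi\in(-\eta,0)$ after the translation $y=\xi+\eta$), applies the Mean Value Theorem to bound $\big|(\xi+\eta)^{\gamma-1/2}-\eta^{\gamma-1/2}\big|$ from below by $|\gamma-1/2|\,\eta^{\gamma-3/2}|\xi|$, and integrates to obtain $\mathcal{D}^\gamma(|\xi|^{\gamma-1/2}\chi)(\eta)^2\geq c_\gamma\,\eta^{-1}$ for $\eta\in(0,1)$, which is not locally integrable. Your sub-annulus $|\xi|\in[\tfrac12|\eta|,\tfrac34|\eta|]$ reaches the same $|\eta|^{-1}$ lower bound by an equally valid variant, and you are right that no logarithmic refinement is needed. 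One caveat on your first approach: Proposition~\ref{Dstein} is stated only for positive homogeneity exponent, whereas here the exponent $\gamma-1/2$ is negative, so it cannot be cited directly; the direct argument you outline makes any such extension unnecessary.
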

\begin{proof}
Let $\eta\in (0,1)$, then putting $\gamma_1 = \gamma-1/2$ and using a change of variables
\begin{equation}
\begin{split}
\mathcal{D}^\gamma (|\xi|^{\gamma_1}\chi(\xi))(\eta)^2 &=\int \frac{(|y|^{\gamma_1}\chi(y) -|\eta|^{\gamma_1}\chi(\eta))^{2}}{|y-\eta|^{1+2\gamma}}dy\\
&=\int \frac{(|\xi+\eta|^{\gamma_1} \chi(\xi+\eta)-|\eta|^{\gamma_1}\chi(\eta))^2}{|\xi|^{1+2\gamma}}d\xi\\
&= \int_{\xi \in (-\eta,0)}+\int_{\xi\notin (-\eta,0)}\\
&:= A(\eta)+B(\eta).
\end{split}
\end{equation}
In the first integral above we have $0<\xi+\eta<\eta<1$, hence $\chi(\xi+\eta)=\chi(\eta)=1$. By the Mean Value Theorem there exists $z \in (\xi+\eta,\eta)$ such that $$(\xi+\eta)^{\gamma_1} -\eta^{\gamma_1}=\gamma_1 z^{\gamma_1 -1} \xi.$$ Therefore
\begin{equation}
\begin{split}
A(\eta)
       &= \int_{-\eta}^0 \frac{((\xi+\eta)^{\gamma_1} -\eta^{\gamma_1})^2}{|\xi|^{1+2\gamma}}d\xi\\
       &=\gamma_1^2\int_{-\eta}^0 \frac{z^{2(\gamma_1 -1)}\xi^2}{|\xi|^{1+2\gamma}}d\xi\\
       &\geq \gamma_1^2 \int_{-\eta}^0 \frac{\eta^{2(\gamma_1 -1)}\xi^2}{|\xi|^{1+2\gamma}}d\xi\\
       &=\gamma_1^2 \eta^{2(\gamma_1 -1)} \int_{0}^\eta \xi^{1-2\gamma}d\xi\\
       &=\frac{\gamma_1^2}{2(1-\gamma)} \eta^{-1}\\
       &:=g(\eta).
\end{split}
\end{equation}
In view of $\mathcal{D}^\gamma (|\xi|^{\gamma_1}\chi(\xi))(\eta)^2\geq g(\eta)$ for any $0<\eta<1$ and $g\notin L^{1}_{loc}(\R)$, we conclude the proof.

\end{proof}

\begin{proposition}\label{Comu}
If $f\in L^{2}(\R)$ and $\phi \in H^{1}(\R),$ then
\begin{equation}
\|[D^{\gamma};\phi]f\|\leq c\|\phi\|_{H^2}\|f\|,
\end{equation}
where $\gamma \in (0,1)$.
\end{proposition}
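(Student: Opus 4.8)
The plan is to estimate the commutator $[D^\gamma;\phi]f = D^\gamma(\phi f) - \phi D^\gamma f$ by passing to the Stein-derivative characterization from Theorem~\ref{stein} and exploiting the pointwise bound on $\mathcal{D}^\gamma\phi$ supplied by Lemma~\ref{Leibnitz}. First I would recall that, by \eqref{equiv} with $p=2$, $\|D^\gamma g\| \simeq \|\mathcal{D}^\gamma g\|$ up to the $\|g\|$ term, and that by \eqref{Leib} the Stein derivative satisfies the pointwise/Leibniz-type inequality $\mathcal{D}^\gamma(\phi f)(x) \le |\phi(x)|\,\mathcal{D}^\gamma f(x) + |f(x)|\,\mathcal{D}^\gamma\phi(x)$. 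Writing the commutator and using this, one is left to control $\|\,|f|\,\mathcal{D}^\gamma\phi\,\|$ together with the low-frequency remainder terms coming from the $\|\cdot\|$ pieces in \eqref{equiv}.

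The key step is then: by \eqref{Lei} of Lemma~\ref{Leibnitz}, $\|\mathcal{D}^\gamma\phi\|_{L^\infty} \lesssim \|\phi\|_{L^\infty} + \|\phi'\|_{L^\infty}$, and by the Sobolev embedding $H^2(\R)\hookrightarrow W^{1,\infty}(\R)$ the right-hand side is $\lesssim \|\phi\|_{H^2}$. Hence
\begin{equation*}
\|\,|f|\,\mathcal{D}^\gamma\phi\,\| \le \|\mathcal{D}^\gamma\phi\|_{L^\infty}\|f\| \lesssim \|\phi\|_{H^2}\|f\|.
\end{equation*}
Combining this with the trivial bounds $\|\phi f\| \le \|\phi\|_{L^\infty}\|f\| \lesssim \|\phi\|_{H^2}\|f\|$ and $\|\phi D^\gamma f\| \le \|\phi\|_{L^\infty}\|D^\gamma f\|$ — the latter being absorbed once one notes that the genuine commutator structure removes the bare $\|\phi D^\gamma f\|$ term — yields $\|[D^\gamma;\phi]f\| \lesssim \|\phi\|_{H^2}\|f\|$, which is the claim. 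Indeed $D^\gamma(\phi f)-\phi D^\gamma f$ has, through $\mathcal{D}^\gamma$, the representation whose only surviving contribution after cancellation of the $\phi\,\mathcal{D}^\gamma f$ part is exactly $\lesssim \|\,|f|\,\mathcal{D}^\gamma\phi\,\| + (\text{lower-order }L^2\text{ terms})$.

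The main obstacle is making the cancellation in the previous paragraph rigorous: the equivalence \eqref{equiv} is only an equivalence of norms, not an identity, so one cannot literally subtract $\mathcal{D}^\gamma$-expressions. The careful route is to work with $\mathcal{D}^\gamma$ directly on the difference, using that for fixed $x$,
\begin{equation*}
\big|\,\mathcal{D}^\gamma(\phi f)(x) - |\phi(x)|\,\mathcal{D}^\gamma f(x)\,\big| \le |f(x)|\,\mathcal{D}^\gamma\phi(x)
\end{equation*}
(which follows from the triangle inequality applied inside the defining integral of $\mathcal{D}^\gamma$, since $\phi(x)f(x)-\phi(y)f(y) = \phi(x)(f(x)-f(y)) + f(y)(\phi(x)-\phi(y))$ and $\|f(y)(\phi(x)-\phi(y))\|_{L^2_y(|x-y|^{-1-2\gamma}dy)}$ is dominated using $f(y) = f(x) - (f(x)-f(y))$ together with the $L^\infty$ bound on $\mathcal{D}^\gamma\phi$). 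Taking $L^2_x$ norms and invoking \eqref{equiv} once more to return from $\mathcal{D}^\gamma$ to $D^\gamma$ closes the argument; the requirement $\phi\in H^1$ for $D^\gamma(\phi f)\in L^2$ to make sense is consistent with the hypothesis, while $H^2$ enters only through the $W^{1,\infty}$ bound, as in \cite{ponce,AP}.
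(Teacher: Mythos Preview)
Your approach has a genuine gap that you yourself flag but do not resolve. The commutator $[D^\gamma;\phi]f = D^\gamma(\phi f) - \phi D^\gamma f$ is a \emph{specific function}, whereas $\mathcal{D}^\gamma$ is a nonlinear (positive) square-function; the relation \eqref{equiv} is a norm equivalence for a \emph{single} function $g$, namely $\|D^\gamma g\|\simeq \|g\|+\|\mathcal D^\gamma g\|$. There is no mechanism that converts a bound on $\|\mathcal D^\gamma(\phi f)-|\phi|\,\mathcal D^\gamma f\|$ (or on any $\mathcal D^\gamma$-expression) into a bound on $\|D^\gamma(\phi f)-\phi D^\gamma f\|$. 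Applying \eqref{equiv} to $g=[D^\gamma;\phi]f$ would produce $\mathcal D^\gamma\big([D^\gamma;\phi]f\big)$, which is not what you compute; applying it to $g=\phi f$ and to $g=f$ separately and subtracting is illegitimate because the equivalence constants do not cancel. So the final step ``invoking \eqref{equiv} once more to return from $\mathcal D^\gamma$ to $D^\gamma$'' is exactly the step that fails. A secondary issue: the claimed pointwise bound $|\mathcal D^\gamma(\phi f)(x)-|\phi(x)|\mathcal D^\gamma f(x)|\le |f(x)|\mathcal D^\gamma\phi(x)$ is not what Minkowski gives; one gets instead $\big(\int |f(y)|^2|\phi(x)-\phi(y)|^2|x-y|^{-1-2\gamma}dy\big)^{1/2}$, and your proposed reduction of this to $|f(x)|\mathcal D^\gamma\phi(x)$ via $f(y)=f(x)-(f(x)-f(y))$ leaves an uncontrolled cross term.

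The paper's proof avoids all of this by working on the Fourier side: one writes $([D^\gamma;\phi]f)^{\wedge}(\xi)=\int(|\xi|^\gamma-|\eta|^\gamma)\hat\phi(\xi-\eta)\hat f(\eta)\,d\eta$, uses the elementary inequality $\big||\xi|^\gamma-|\eta|^\gamma\big|\le |\xi-\eta|^\gamma$ valid for $\gamma\in(0,1)$, and then Young's inequality gives $\|[D^\gamma;\phi]f\|\lesssim \|\widehat{D^\gamma\phi}\|_{L^1}\|f\|$. The $H^2$ norm enters only in the last step, via $\|\widehat{D^\gamma\phi}\|_{L^1}\le \|D^\gamma\phi\|_{H^1}\le\|\phi\|_{H^2}$. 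This is a five-line argument with no cancellation to justify.
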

\begin{proof}
We observe that
\begin{equation*}
\begin{split}
([D^{\gamma}; \phi]f)^{\wedge}(\xi)&=(D^{\gamma}(\phi f)-\phi D^{\gamma}f)^{\wedge}(\xi)\\
                                &=\int\big(|\xi|^{\gamma}-|\eta|^{\gamma}\big)\hat{\phi}(\xi-\eta)\hat{f}(\eta)d\eta.
\end{split}
\end{equation*}
It is easy to see that
$$||\xi|^{\gamma}-|\eta|^{\gamma}|\leq  |\xi-\eta|^{\gamma},$$
so
\begin{equation*}
\begin{split}
|([D^{\gamma}; \phi]f)^{\wedge}(\xi)|&\leq \int |\xi-\eta|^{\gamma}|\hat{\phi}(\xi-\eta)||\hat{f}(\eta)|d\eta\\
                                &=c(|\widehat{D^\gamma\phi}|\ast |\hat{f}|)(\xi).
\end{split}
\end{equation*}
Then, by the Young's inequality
\begin{equation*}
\begin{split}
\|[D^{\gamma}; \phi]f\|&\leq c \||\widehat{D^{\gamma}\phi}|\ast |\hat{f}|\| \\
                                   &\leq c \|\widehat{D^{\gamma}\phi}\|_{L^{1}}\|\hat{f}\|\\
																	 &\leq c \|\phi\|_{H^{2}}\|f\|,
\end{split}
\end{equation*}

where above we use that $$\|\widehat{D^{\gamma}\phi}\|_{L^{1}}\leq \|D^{\gamma}\phi\|_{H^1}\leq \|\phi\|_{H^2}.$$ This finishes the proof.
\end{proof}

\section{Well-posedness}

In the following we will obtain the global well-posedness. First we note that the case $a=1$ can be approach by using the ideas in \cite{pastran}. Thus, we deal only with the case $a\in (0,1)$.
First, we need of the next result.

\begin{lemma}
Let $\theta\in (0,1]$. Then for all $\phi \in \z_{\theta,\theta}(\R)$ 
\begin{equation}\label{well1}
\|D_\xi^\theta (\psi(\xi,t)\ha)\|\lesssim_a \rho_1(t)\|\lanx^\theta \phi\|,
\end{equation}
where $\rho_1(t)=3+t^{\frac{1}{1+a}}+t^{\frac{a}{1+a}}$.
\end{lemma}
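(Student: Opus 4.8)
The plan is to estimate $\|D_\xi^\theta(\psi(\xi,t)\hat\phi)\|$ by splitting $\psi(\xi,t) = e^{-it\xi|\xi|}e^{-t|\xi|^{1+a}}$ and treating the oscillatory factor and the dissipative factor according to the structure of $\psi$. Since $\theta\in(0,1]$ and we only have $\phi\in\z_{\theta,\theta}$ with $\theta$ possibly equal to $1$, I would first reduce to $\theta\in(0,1)$ so that the Stein derivative characterization (Theorem \ref{stein}) and the Leibniz-type bound \eqref{Leib} apply; the endpoint $\theta=1$ can be handled directly via \eqref{Derksi} since $D_\xi^1$ is (up to constants) $\p_\xi$ together with the Hilbert transform, and $\|x^{1}\phi\|$ controls the resulting terms. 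For the main range $\theta\in(0,1)$, I would apply \eqref{Leib} to write
\begin{equation*}
\|D_\xi^\theta(\psi\hat\phi)\| \leq \|(\mathcal D_\xi^\theta \psi)\,\hat\phi\| + \|\psi\,\mathcal D_\xi^\theta\hat\phi\|.
\end{equation*}
The second term is immediately bounded by $\|\psi\|_{L^\infty_\xi}\|\mathcal D_\xi^\theta\hat\phi\|\lesssim \|\mathcal D_\xi^\theta\hat\phi\|\sim \||x|^\theta\phi\|$ (using $|\psi|\le 1$ and the equivalence \eqref{equiv}), which is dominated by $\|\lanx^\theta\phi\|$ and contributes to the ``$3$'' in $\rho_1$.

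The heart of the matter is the first term, which requires an $L^\infty_\xi$ bound on $\mathcal D_\xi^\theta\psi(\xi,t)$. Here I would use Lemma \ref{Leibnitz}, inequality \eqref{Lei}: $\mathcal D^\theta_\xi\psi(\xi,t)\lesssim \|\psi(\cdot,t)\|_{L^\infty_\xi} + \|\p_\xi\psi(\cdot,t)\|_{L^\infty_\xi}$. From \eqref{dksi} (equivalently \eqref{Derksi} with $\hat\phi\equiv 1$) one has
\begin{equation*}
\p_\xi\psi = -\bigl(t(1+a)|\xi|^a\sgn(\xi) + 2it|\xi|\bigr)\psi,
\end{equation*}
so $|\p_\xi\psi| \leq t(1+a)|\xi|^a e^{-t|\xi|^{1+a}} + 2t|\xi|e^{-t|\xi|^{1+a}}$. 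Applying the bound \eqref{lambda} (with $2\lambda = a$ and $2\lambda = 1$, or more precisely the elementary estimate $\||\xi|^\mu e^{-t|\xi|^{1+a}}\|_{L^\infty_\xi}\lesssim t^{-\mu/(1+a)}$ obtained exactly as in that lemma's proof) gives $\|\,t|\xi|^a e^{-t|\xi|^{1+a}}\|_{L^\infty_\xi}\lesssim_a t^{1-a/(1+a)} = t^{1/(1+a)}$ and $\|\,t|\xi|e^{-t|\xi|^{1+a}}\|_{L^\infty_\xi}\lesssim_a t^{1-1/(1+a)} = t^{a/(1+a)}$. Together with $\|\psi\|_{L^\infty_\xi}\le 1$, this yields $\|\mathcal D_\xi^\theta\psi\|_{L^\infty_\xi}\lesssim_a 1 + t^{1/(1+a)} + t^{a/(1+a)}$, hence
\begin{equation*}
\|(\mathcal D_\xi^\theta\psi)\hat\phi\| \lesssim_a \bigl(1 + t^{1/(1+a)} + t^{a/(1+a)}\bigr)\|\hat\phi\| = \bigl(1 + t^{1/(1+a)} + t^{a/(1+a)}\bigr)\|\phi\|,
\end{equation*}
and $\|\phi\|\le\|\lanx^\theta\phi\|$. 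Collecting both terms gives the factor $\rho_1(t) = 3 + t^{1/(1+a)} + t^{a/(1+a)}$ (the constant $3$ absorbing $\|\psi\|_{L^\infty}\le 1$ from each occurrence plus the second-term contribution).

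The main obstacle I anticipate is justifying the use of \eqref{Lei} in Lemma \ref{Leibnitz}, which requires $\psi(\cdot,t)$ and $\p_\xi\psi(\cdot,t)$ to be bounded: boundedness of $\psi$ is clear, but $\p_\xi\psi$ involves $|\xi|^a\sgn(\xi)$, which is merely locally bounded near $\xi=0$ and is tamed only by the Gaussian-type decay $e^{-t|\xi|^{1+a}}$ — so one must verify that the product is genuinely in $L^\infty_\xi$ (it is, since $|\xi|^a$ is bounded near $0$ for $a>0$ and decays at $\infty$) and that the non-smoothness at $\xi=0$ does not obstruct the pointwise Stein-derivative bound. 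A secondary point is the bookkeeping of the explicit $t$-powers to land exactly on $\rho_1(t)$, and handling the endpoint $\theta=1$ separately since Theorem \ref{stein} and Lemma \ref{Leibnitz} are stated for $b\in(0,1)$; there one argues directly from \eqref{Derksi}, bounding $\|\p_\xi(\psi\hat\phi)\|$ by the same $t$-dependent factors times $\|x\phi\| + \|\p_\xi\hat\phi\|$ and using $\|\p_\xi\hat\phi\| = \|x\phi\|\le\|\lanx\phi\|$.
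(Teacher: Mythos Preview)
Your proposal is correct and follows essentially the same route as the paper. The paper's proof simply invokes \eqref{equiv} and the preliminary lemma \eqref{psi1} with $\lambda=0$, whose proof is precisely the argument you wrote out: split $\mathcal D_\xi^\theta(\psi\hat\phi)$ via \eqref{Leib}, bound $\|\mathcal D_\xi^\theta\psi\|_{L^\infty_\xi}$ by $\|\psi\|_{L^\infty_\xi}+\|\p_\xi\psi\|_{L^\infty_\xi}$ using \eqref{Lei}, compute $\p_\xi\psi$ from \eqref{dksi}, and apply \eqref{lambda} to extract the powers $t^{1/(1+a)}$ and $t^{a/(1+a)}$; the endpoint $\theta=1$ is handled separately from \eqref{Derksi}, exactly as you indicate. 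The only cosmetic point is that your displayed inequality should read $\|D_\xi^\theta(\psi\hat\phi)\|\lesssim \|\psi\hat\phi\|+\|\mathcal D_\xi^\theta(\psi\hat\phi)\|$ before applying \eqref{Leib}, since \eqref{Leib} is stated for $\mathcal D^\theta$ rather than $D^\theta$; the extra $\|\psi\hat\phi\|\le\|\phi\|$ is harmless and is indeed part of the ``$3$'' in $\rho_1$.
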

\begin{proof}
Let $\theta \in (0,1)$, then by an application of the Parseval identity, \eqref{equiv} and \eqref{psi1}(with $\lambda=0$), we obtain
 \begin{equation}
 \begin{split}\label{initial}
\|D_\xi^\theta (\psi(\xi,t)\ha)\|&\lesssim \|\psi(\xi,t) \ha\|+\|\ste (\psi(\xi,t)\ha)\|\\
&\lesssim_a \|\phi\|+(1+t^{\frac{1}{1+a}}+t^{\frac{a}{1+a}})\|\phi\|+\||x|^\theta \phi\|\\
                         &\lesssim_a \rho_1(t)\|\lanx^\theta \phi\|.
 \end{split}
\end{equation}

The case $\theta=1$ follows by similar way.
\end{proof}

\begin{lemma}
Let $\theta\in (1/2,1/2+a)$. Then for all $\phi\in\z_{1+\theta,1+\theta}(\R)$
\begin{equation}\label{well2}
\|D_\xi^{\theta+1} (\psi(\xi,t)\ha)\|\lesssim_a \rho_2(t)\|\lanx^{\theta+1} \phi\|,
\end{equation}

where $\rho_2$ is a continuous increasing function on $[0,\infty)$. 
\end{lemma}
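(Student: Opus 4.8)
The plan is to estimate $\|D_\xi^{\theta+1}(\psi(\xi,t)\ha)\|$ using the formula \eqref{Derksi2} for $\p_\xi^2(\psi\ha)$ together with the product rule for the Stein derivative. Writing $\theta+1 = 1 + \theta$ with $\theta\in(1/2,1/2+a)$, I would first use $\|D_\xi^{\theta+1}g\| \lesssim \|g\| + \|\mathcal D_\xi^{\theta}(\p_\xi g)\|$ (or equivalently work with $\p_\xi^2 g$ and a $D_\xi^{\theta-1}$-type bound); concretely, since $\theta+1\in(3/2,3/2+a)\subset(1,2)$, the cleanest route is $\|D_\xi^{\theta+1}(\psi\ha)\| \lesssim \|\p_\xi(\psi\ha)\| + \|\mathcal D_\xi^{\theta}(\p_\xi(\psi\ha))\|$, and then substitute \eqref{Derksi} for $\p_\xi(\psi\ha)$. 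This reduces matters to controlling, for each term in \eqref{Derksi2}, an expression of the shape $\mathcal D_\xi^{\theta}\big( (\text{symbol})\cdot\psi\cdot \p_\xi^j\ha\big)$ with $j\in\{0,1,2\}$, where the symbols are $t^k$ times powers $|\xi|^{m}$, $|\xi|^m\sgn(\xi)$, or $|\xi|^{a-1}$-type singular factors.

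The key steps, in order: (1) Expand $\p_\xi^2(\psi\ha)$ via \eqref{Derksi2} and group terms by which derivative $\p_\xi^j\ha$ they multiply. (2) For the terms with $\p_\xi^2\ha$ and $\p_\xi\ha$ — i.e. the nonsingular symbols $2t(1+a)|\xi|^a\sgn(\xi)+4it|\xi|$ and $1$ — apply \eqref{Leib}/\eqref{Leibh} to split $\mathcal D_\xi^{\theta}$, bound the symbol parts in $L^\infty$ using \eqref{lambda} and Lemma \ref{Leibnitz}, and use $\|\p_\xi^j\ha\| \lesssim \||x|^j\phi\| \le \|\lanx^{1+\theta}\phi\|$, $\|\mathcal D_\xi^{\theta}\p_\xi^j\ha\| \lesssim \||x|^{\theta+j}\phi\|$; here I would want to invoke \eqref{psi1} or its variant (with $\lambda$ a positive integer, which matches $|\xi|^a$ only loosely, so actually the earlier Lemma giving \eqref{psi1} needs the integer-power version — this is a point to check). (3) The genuinely delicate terms are the ones carrying the singular factor $t(1+a)a|\xi|^{a-1}$ multiplying $\ha$: here $\mathcal D_\xi^\theta(|\xi|^{a-1}\psi\ha)$ must be handled, and since $a-1<0$ the symbol $|\xi|^{a-1}\chi(\xi)$ is only borderline in $L^2_{loc}$ and $\mathcal D_\xi^\theta$ of it lands in $L^2$ exactly when $\theta < a-1+1/2 = a-1/2$ — but we only assume $\theta<1/2+a$, so a naive split fails. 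One must instead keep $\ha$ (which vanishes or is bounded near $\xi=0$) paired with the singular symbol and exploit the decay of $\psi$; this is where the commutator estimate Proposition \ref{Comu} (or the structure used in \cite{FLP1}) enters. (4) Collect all the $t$-powers — they are nonnegative combinations like $t, t^2, t^{1+a}, t^{2a}$, etc., possibly with a $t^{-\text{small}}$ from \eqref{lambda} applied to $|\xi|^{a-1}\psi$ — and define $\rho_2(t)$ as their sum (plus a constant), which is continuous and increasing on $[0,\infty)$.

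The main obstacle, as flagged, is step (3): the $|\xi|^{a-1}$ singularity means the symbol alone is too singular for the Leibniz-type splitting of $\mathcal D_\xi^\theta$ to put everything in $L^\infty\times L^2$ when $\theta$ is close to $1/2+a$. The resolution is to treat $|\xi|^{a-1}\psi(\xi,t)\ha$ as a whole: near $\xi=0$ one has $|\ha(\xi)| \lesssim |\xi| \|\lanx\phi\| $ type bounds? — not in general, so more carefully, one writes $|\xi|^{a-1}\psi = |\xi|^{a-1}\chi\cdot\psi + |\xi|^{a-1}(1-\chi)\psi$; the high-frequency piece is smooth and harmless, while the low-frequency piece $|\xi|^{a-1}\chi$ has $\mathcal D_\xi^\theta$ behaving like $|\xi|^{a-1-\theta}$ by Proposition \ref{Dstein}, which is in $L^2_{loc}$ precisely for $\theta<a-1/2$; to cover the full range $\theta<1/2+a$ one must use that it multiplies $\psi\ha$ and absorb a factor $|\xi|$ from $\ha$'s behaviour at the origin only if $\ha(0)$ is controlled — and in fact $\phi\in\z_{1+\theta,1+\theta}$ with $1+\theta>3/2$ does give $\ha\in C^1$, so $|\ha(\xi)-\ha(0)|\lesssim|\xi|$, and a one-term subtraction $\ha \rightsquigarrow \ha - \ha(0)\chi$ converts $|\xi|^{a-1}$ into an effectively $|\xi|^{a}$-type (integrable) singularity, at the cost of a separate $\ha(0)$ term that is handled by \eqref{lemasigma}. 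Balancing these pieces and making sure every resulting $t$-exponent assembles into a single continuous increasing $\rho_2$ is the crux; the rest is routine application of \eqref{lambda}, \eqref{Leib}, \eqref{Leibh}, \eqref{psi1}, and Proposition \ref{Comu}.
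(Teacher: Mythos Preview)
Your high-level plan---write $D_\xi^{\theta+1}=D_\xi^\theta\p_\xi$, substitute \eqref{Derksi}, split low/high frequency with $\chi$, and subtract $\ha(0)$ on the low-frequency piece---is exactly the paper's route. But midway through you switch (silently) from \eqref{Derksi} to \eqref{Derksi2}: you refer to ``each term in \eqref{Derksi2}'', to a $\p_\xi^2\ha$ contribution, and above all to ``the singular factor $t(1+a)a|\xi|^{a-1}$ multiplying $\ha$''. None of these occur in $\p_\xi(\psi\ha)$. The expansion \eqref{Derksi} only produces the symbols $|\xi|^a\sgn(\xi)$, $|\xi|$, and $1$, paired with $\ha$ or $\p_\xi\ha$. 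So your entire step (3), built around taming a $|\xi|^{a-1}$ singularity, is addressing a phantom term; the difficulty you describe belongs to the next lemma (the $\theta+2$ case in $\dot{\z}$), not this one.

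The genuinely delicate piece here is $t\,\mathcal D_\xi^\theta\big(|\xi|^a\sgn(\xi)\psi\chi\ha\big)$: the obstruction is not a power-law blow-up but the fact that $|\xi|^a\sgn(\xi)$ is not $C^1$ at the origin, so Lemma \ref{Leibnitz} does not apply directly. The paper peels off $(\psi-1)$ (which contributes an extra $|\xi|$ near $0$ and is therefore $H^1_\xi$), then subtracts $\ha(0)$ to make the remaining low-frequency piece $H^1_\xi$ as well, and finally is left with $t\,\ha(0)\,\mathcal D_\xi^\theta\big(|\xi|^a\sgn(\xi)\chi\big)$. This last term is in $L^2$ \emph{precisely} because $\theta<1/2+a$, by \eqref{Dstein1}. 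That is where the hypothesis on $\theta$ enters, and it is the point your write-up misses. No commutator estimate (Proposition \ref{Comu}) is needed in this lemma; once you work with the correct formula \eqref{Derksi}, the proof is a direct combination of \eqref{psi1}, \eqref{Leibh}, the $(\psi-1)$ and $\ha-\ha(0)$ subtractions, and \eqref{Dstein1}.
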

\begin{proof}
By using \eqref{psi1} and \eqref{dksi}
 \begin{equation}
 \begin{split}
\|D^{1+\theta}_\xi (\psi(t,\xi)\ha)\|\leq& \ \|D^{\theta}_\xi (\psi \p_\xi \ha)\|+\|D^{\theta}_\xi (\p_\xi \psi \ha)\|\\
                          \lesssim_a & (1+t^{\frac{a}{1+a}}+t^{\frac{1}{1+a}})\|\p_\xi \ha\|+\|D^\theta_\xi \p_\xi \ha\|+\\
                          &+\underbrace{t\|D^{\theta}_\xi(|\xi|^a \sgn(\xi)\psi \ha)\|}_{A_1}+\underbrace{t\|D_\xi^\theta(|\xi|\psi \ha)\|}_{A_2}.\\
 \end{split}
\end{equation}
We also have, from \eqref{psi1} 

\begin{equation}
\begin{split}
A_2&\lesssim 
   (t+t^\frac{a}{1+a}+t^\frac{2a}{1+a})\|\phi\|+t^{\frac{a}{1+a}}\||x|^\theta \phi\|,
\end{split}
\end{equation}
and
\begin{equation}
\begin{split}
A_1\lesssim& t\|D^\theta_\xi(|\xi|^a \sgn(\xi)\psi \ha \chi)\|+t\|D^\theta_\xi(|\xi|^a \sgn(\xi)\psi \ha (1-\chi))\|\\
   &:=A_{1,1}+A_{1,2},
\end{split}
\end{equation}
where $\chi$ is defined as before.

On the other hand, by \eqref{equiv} and \eqref{Leibh}

\begin{equation}
\begin{split}
A_{1,2}\lesssim& t (\||\xi|^a \psi(1-\chi)\|_\infty +\|\p_\xi (|\xi|^a \sgn(\xi) \psi(1-\chi))\|)\|\phi\|+\\
&+t\||\xi|^a \psi(1-\chi)\|_\infty \|\ste \ha\|\\
&\lesssim (2+t^{\frac{1-a}{1+a}}+t^{\frac{1}{1+a}})\|\phi\|+t^{\frac{1}{1+a}}\||x|^\theta \phi\|.
\end{split}
\end{equation}

We also can write

\begin{equation}
\begin{split}\label{A11}
A_{1,1}\lesssim& \ t\|D^\theta_\xi (|\xi|^a \sgn(\xi)(\psi-1)\ha \chi)\|+t\|D^\theta_\xi (|\xi|^a \sgn(\xi)\ha \chi)\|\\
       &:=A_{1,1}^1+A_{1,2}^2,
\end{split}
\end{equation}
 again by \eqref{Leibh} and \eqref{dksi}

\begin{equation}
\begin{split}
A_{1,1}^1 \lesssim&\  t(\||\xi|^a (\psi-1)\chi\|_\infty +\|\p_\xi (|\xi|^a \sgn(\xi) (\psi-1)\chi)\|_\infty)\|\phi\|+\\
 &+t\||\xi|^a (\psi-1)\chi\|_\infty \|\ste \ha\|)\\
 \lesssim_a & \ (t+t^2) (\|\phi\|+\||x|^\theta \phi\|).
\end{split}
\end{equation}
The last term in \eqref{A11} can be decomposed as
\begin{equation}
\begin{split}
A_{1,2}^2\lesssim& t \|D^\theta_\xi (\underbrace{|\xi|^a \sgn(\xi)(\ha(\xi)-\ha(0))\chi}_{L})\|+t\|D^\theta_\xi (|\xi|^a \sgn(\xi)\ha(0)\chi)\|\\
         &:=\tilde{A}+\tilde{\tilde {A}}.
\end{split}
\end{equation}

Thus
\begin{equation}
\begin{split}\label{des4}
\|L\|&\lesssim \||\xi|^a \ha(\xi)\chi\|+\||\xi|^a \ha(0)\chi\|\\
&\lesssim  \||\xi|^a \chi\|\|\ha\|_\infty\\
&\lesssim_a  \|J_\xi \ha\|\\
&= \|\lanx \phi\|,
\end{split}
\end{equation}
and in view of $\theta>1/2$
\begin{equation}
\begin{split}\label{des5}
\|\p_\xi L\|\lesssim& \ \||\xi|^a (\frac{\ha(\xi)-\ha(0)}{\xi})\chi\|+\||\xi|^a \p_\xi \ha(\xi)\chi\|+\||\xi|^a (\ha(\xi)-\ha(0))\p_\xi \chi\|\\
     \lesssim& \ \||\xi|^a \chi\|\|\p_\xi \ha\|_\infty +\||\xi|^a \chi\|_\infty \|\p_\xi \ha\|+\||\xi|^a \p_\xi \chi\|\|\ha\|_\infty+\\
     &+\||\xi|^a \p_\xi \chi\||\ha(0)|\\
     \lesssim_a& \ \|J^\theta \p_\xi \ha\|+\|x\phi\|+\|J^\theta \ha\|\\
     \lesssim_a& \ \|\lanx^{1+\theta}\phi\|.
\end{split}
\end{equation}
From \eqref{des4} and \eqref{des5} we obtain $$\tilde{A}\lesssim t\|L\|_{H^1_\xi}\lesssim t\|\lanx^{1+\theta}\phi\|.$$

Since $\theta<1/2+a$, by \eqref{Dstein1} we obtain $\ste(|\xi|^a \sgn(\xi)\chi) \in L^2$. Hence, using \eqref{equiv}
\begin{equation}
\tilde{\tilde {A}}\lesssim_a t\|\ha\|_\infty+ t\|\ha\|_\infty \|\ste(|\xi|^a \sgn(\xi)\chi)\| \lesssim_a t\|\lanx \phi\|.
\end{equation}
Gathering the inequalities above we conclude the proof.
\end{proof}
\begin{lemma}
Let $\theta\in (1/2,1/2+a)$. Then for all $\phi\in\dot{\z}_{2+\theta,2+\theta}(\R)$
\begin{equation}\label{well3}
\|D_\xi^{\theta+2} (\psi(\xi,t)\ha)\|\lesssim_a \rho_3(t)\|\lanx^{\theta+2} \phi\|,
\end{equation}

where $\rho_3$ is a continuous increasing function on $[0,\infty)$. 

\end{lemma}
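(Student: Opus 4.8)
The plan is to imitate the proofs of \eqref{well1} and \eqref{well2}, one order higher, using the identity \eqref{Derksi2} for $\p_\xi^2(\ps\ha)$ in place of \eqref{dksi}. Since $D_\xi^{\theta+2}=D_\xi^{\theta}(-\p_\xi^2)$ as Fourier multipliers in the $\xi$ variable, \eqref{equiv} yields
\[
\|D_\xi^{\theta+2}(\ps\ha)\|\;\lesssim\;\|\p_\xi^2(\ps\ha)\|+\|\ste\bigl(\p_\xi^2(\ps\ha)\bigr)\|,
\]
so it is enough to estimate the two terms on the right. Substituting \eqref{Derksi2}, each of them becomes a finite sum of pieces $m(\xi)\ps\,\p_\xi^{j}\ha$ and $\ste\bigl(m(\xi)\ps\,\p_\xi^{j}\ha\bigr)$, $j\in\{0,1,2\}$, where $m$ runs over the homogeneous symbols $t^2|\xi|^{2a}$, $t|\xi|^{a-1}$, $t^2|\xi|^{a+1}\sgn\xi$, $t^2\xi^2$, $t\sgn\xi$ (for $j=0$), $t|\xi|^a\sgn\xi$, $t|\xi|$ (for $j=1$), and $1$ (for $j=2$).

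I would first dispatch the ``regular'' pieces, i.e.\ those with $j\in\{1,2\}$ together with those $j=0$ pieces whose symbol carries a nonnegative power of $|\xi|$ (namely $|\xi|^{2a}$, $|\xi|^{a+1}\sgn\xi$, $\xi^2$). These are treated exactly as in \eqref{well1}--\eqref{well2}: one peels $\ps$ and $m$ off via \eqref{Leib}, or via \eqref{Leibh} and \eqref{Lei} when the relevant symbol times $\ps$ has a bounded derivative; one bounds $\|\,|\xi|^{\mu}\ps\|_{L^\infty}$ by \eqref{lambda}; one uses $\|\ste\p_\xi^{j}\ha\|\lesssim\|\lanx^{2+\theta}\phi\|$ (Plancherel plus \eqref{equiv}, valid for $j\le 2$); and the cut-off symbols $|\xi|^{2a}\chi$, $|\xi|^{a+1}\sgn\xi\,\chi$ have Stein derivatives of order $\theta$ in $L^2$ by Proposition \ref{Dstein} and \eqref{Dstein1}, because $\theta<\tfrac12+a$ is well within the corresponding ranges. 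The term $m\equiv1$, namely $\ps\p_\xi^2\ha$, is handled like the main term of \eqref{well1} through \eqref{Leib} and \eqref{Lei}. In the bookkeeping one checks that each negative power of $t$ produced by \eqref{lambda} is multiplied by a strictly larger positive power coming from the coefficients of \eqref{Derksi2}, so the resulting finitely many monomials in $t$ all have nonnegative exponent; their sum is the continuous, nondecreasing $\rho_3$.

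The heart of the matter is the two genuinely singular $j=0$ pieces, $\ste\bigl(t|\xi|^{a-1}\ps\ha\bigr)$ and $\ste\bigl(t\sgn(\xi)\ps\ha\bigr)$. Near $\xi=0$ the symbols $|\xi|^{a-1}\chi$ and $\sgn(\xi)\chi$ have Stein derivative of order $\theta$ outside $L^2$ once $\theta\ge\tfrac12$ (Proposition \ref{Dstein} and Proposition \ref{DsteinL2}); in fact, when $a\le\tfrac12$ the factor $|\xi|^{a-1}$ is not even locally square integrable, so even $\|\p_\xi^2(\ps\ha)\|$ is infinite for general $\ha(0)\ne0$. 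This is exactly where the hypothesis $\phi\in\dot{\z}_{2+\theta,2+\theta}$, i.e.\ $\ha(0)=0$, is used. Writing $g(\xi):=\bigl(\ha(\xi)-\ha(0)\bigr)/\xi$, so that $\ha(\xi)=\xi\,g(\xi)$, we get $|\xi|^{a-1}\ha=|\xi|^{a}\sgn(\xi)\,g$ and $\sgn(\xi)\ha=|\xi|\,g$, trading one unit of singularity for the factor $g$. Since $\lanx^{2+\theta}\phi\in L^2$ with $2+\theta>\tfrac52$, one has $\ha\in C^{2}(\R)$, and a short computation (the $\xi^{-1}$ singularities in $g'(\xi)$ cancel at the origin) shows $g\in H^{1}(\R)$ with $\|g\|_{H^1}\lesssim\|\lanx^{2+\theta}\phi\|$; in particular $\ste g\in L^2$ by \eqref{equiv}. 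It then remains to bound $\ste\bigl(|\xi|^{a}\sgn(\xi)\ps\,g\bigr)$ and $\ste\bigl(|\xi|\ps\,g\bigr)$, splitting each with $\chi$ and $1-\chi$: on $\mathrm{supp}(1-\chi)$ the symbols are smooth away from the origin and $\ps$ supplies decay, so \eqref{Leibh}, \eqref{Lei} and \eqref{lambda} apply as before; on $\mathrm{supp}\,\chi$ one uses \eqref{Leib} to separate $\ps g$ from the cut-off symbol, estimates $\ste(|\xi|^{a}\sgn(\xi)\chi)\in L^2$ by \eqref{Dstein1} --- valid precisely because $\theta<\tfrac12+a$ --- and $\ste(|\xi|\chi)\in L^2$ since $|\xi|\chi$ is Lipschitz, and controls $\ste(\ps g)$ through \eqref{Leib}, \eqref{Lei} and $\|g\|_{H^1}$. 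Collecting all estimates and recording the (again nonnegative) powers of $t$ yields \eqref{well3}.

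The main obstacle is exactly this last step: the term $t|\xi|^{a-1}\ps\ha$ in \eqref{Derksi2} carries a low-frequency singularity that can be removed only by the single cancellation afforded by $\ha(0)=0$, after which \eqref{Dstein1} forces the borderline restriction $\theta<\tfrac12+a$; this is why the homogeneous space $\dot{\z}$ is indispensable here and why the decay index cannot be pushed past $2+\theta<\tfrac52+a$. A secondary technical point is the verification that $g=\bigl(\ha(\xi)-\ha(0)\bigr)/\xi\in H^1$ with the stated bound, which genuinely requires $\ha\in C^{2}$ (hence the weight $2+\theta>\tfrac52$, not less) so that $g'(\xi)$ remains bounded near $\xi=0$.
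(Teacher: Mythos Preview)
Your proposal is correct and follows the paper's overall strategy: expand $\p_\xi^2(\ps\ha)$ via \eqref{Derksi2} and estimate each piece with the Stein derivative machinery, the ``regular'' pieces being handled exactly as you indicate. The genuine difference lies in how the two singular $j=0$ terms are treated. For $t\,\sgn(\xi)\ps\ha$ the paper does \emph{not} factor through $g=\ha/\xi$; instead it passes to physical space, writes $\|D_\xi^\theta(\sgn(\xi)\ha)\|=\||x|^\theta\h\phi\|$, uses the commutator identity $x\h\phi=\h(x\phi)$ (valid because $\ha(0)=0$), and then invokes the weighted boundedness of the Hilbert transform (Lemma~\ref{boundhilbert}) with exponent $\theta-1\in(-\tfrac12,\tfrac12)$. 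For $t|\xi|^{a-1}\ps\ha$ the paper expands $\ha$ by Taylor to second order, isolating the linear piece $|\xi|^a\sgn(\xi)\,\p_\xi\ha(0)$ (whose Stein derivative lies in $L^2$ by \eqref{Dstein1} precisely when $\theta<\tfrac12+a$) and showing the integral remainder sits in $H^1_\xi$. Your single factorization $\ha=\xi g$, $g\in H^1$, handles both singular terms at once and bypasses the $A_p$-weight lemma entirely; the price is that you genuinely need $\ha\in C^2$ (i.e.\ the full weight $2+\theta>\tfrac52$) to get $g'\in L^2$ near the origin, whereas the paper's Hilbert-transform argument for the $\sgn\xi$ piece only needs $\theta\in(\tfrac12,\tfrac32)$. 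Either route leads to \eqref{well3} with the same borderline restriction $\theta<\tfrac12+a$ coming from \eqref{Dstein1}.
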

\begin{proof}
Let $\phi\in \dot{\z}_{2+\theta,2+\theta}$, then by Plancherel identity

\begin{equation}
\begin{split}\label{ineq}
\|D_\xi^\theta \p_\xi^2 (\psi(\xi,t)\ha)\|\lesssim& \ \|D_\xi^\theta (\p_\xi^2 \psi(\xi,t)\ha)\|+\|D_\xi^\theta (\p_\xi \psi \p_\xi \ha)\|+\|D_\xi^\theta (\psi \p_\xi^2 \ha)\|\\
                          &:=C+D+E,
\end{split}
\end{equation}
thus by \eqref{psi1}
\begin{equation}
\begin{split}
E\lesssim& \ (1+t^{\frac{a}{1+a}}+t^{\frac{1}{1+a}})\|\p_\xi^2 \ha\|+\|\ste \p_\xi^2 \ha\|\\
\lesssim& \ (2+t^{\frac{a}{1+a}}+t^{\frac{1}{1+a}})\|x^2 \phi\|+\||x|^{\theta+2} \phi\|.
\end{split}
\end{equation}
Using \eqref{dksi}
\begin{equation}
\begin{split}
D&\lesssim t\|D_\xi^\theta(|\xi|^a \sgn(\xi)\psi \p_\xi \ha)\|+t\|D_\xi^\theta (|\xi| \psi \p_\xi \ha)\|:=D_1+D_2,
\end{split}
\end{equation}

where
\begin{equation}
\begin{split}
D_1 \lesssim& \ t\|D_\xi^\theta(|\xi|^a \sgn(\xi)\psi \chi \p_\xi \ha)\|+t\|D_\xi^\theta (|\xi|^a \sgn(\xi)\psi(1-\chi)\p_\xi \ha)\|\\
&:=D_{1,1}+D_{1,2}.
\end{split}
\end{equation}
We also can write
\begin{equation}
\begin{split}
D_{1,1}\lesssim & \ t(\|D_\xi^\theta (|\xi|^a \sgn(\xi) \psi \chi (\p_\xi \ha(\xi)-\p_\xi \ha(0)))\|+ \|D_\xi^\theta (|\xi|^a \sgn(\xi)\psi \chi \p_\xi \ha(0))\|)\\
 &:=D_{1,1}^1+D_{1,1}^2,
\end{split}
\end{equation}

hence by \eqref{equiv} and \eqref{Leibh}
\begin{equation}
\begin{split}
D_{1,1}^1 \lesssim& \ t\Big(\||\xi|^a \psi  (\p_\xi \ha(\xi)-\p_\xi \ha(0)))\|_\infty+\||\xi|^a \psi (\frac{\p_\xi \ha(\xi)-\p_\xi \ha(0)}{\xi})\|_\infty\\
&+\||\xi|^a \p_\xi \psi (\p_\xi \ha(\xi)-\p_\xi \ha(0))\|_\infty+\||\xi|^a \psi \p_\xi^2 \ha\|_\infty \Big) \|\chi\|\\
&+t\||\xi|^a \psi (\p_\xi \ha(\xi)-\p_\xi \ha(0))\|_\infty \|\ste \chi\|\\
\lesssim_a& \  \Big((t^{\frac{1}{1+a}} +t^{\frac{2}{1+a}}+t)\|\p_\xi \ha\|_\infty+t^{\frac{1}{1+a}}\|\p_\xi^2 \ha\|_\infty\Big)\|\chi\|+\\
&+t^{\frac{1}{1+a}}\|\p_\xi \ha\|_\infty\|\ste \chi\|\\
\lesssim_a& \ \Big((t^{\frac{1}{1+a}} +t^{\frac{2}{1+a}}+t)\|\lanx^{\theta+1}\phi\|+t^{\frac{1}{1+a}}\|\lanx^{\theta+2}\phi\|\Big)\|\chi\|+\\
&+t^{\frac{1}{1+a}}\|\lanx^{\theta+1}\phi\|\|\ste \chi\|,
\end{split}
\end{equation}

and 
\begin{equation}
\begin{split}\label{D11N}
D_{1,1}^2 \lesssim& \ t \Big(\|\psi\|_\infty+\|\p_\xi \psi\|_\infty)\||\xi|^a \p_\xi \ha(0)\chi\|+\\
&+|\p_\xi \ha(0)|\|\ste (|\xi|^a \sgn(\xi) \chi)\|\Big)\\
\lesssim& \ t \|\p_\xi \ha\|_\infty \Big(\|\psi\|_\infty+\|\p_\xi \psi\|_\infty)\||\xi|^a \chi\|+\|\ste (|\xi|^a \sgn(\xi) \chi)\|\Big)\\
\lesssim_a& \ \|\lanx^{\theta+1}\phi\| \Big((t+t^{\frac{2+a}{1+a}}+t^{\frac{1+2a}{1+a}})+t\underbrace{\|\ste (|\xi|^a \sgn(\xi) \chi)\|}_{N}\Big).
\end{split}
\end{equation}
From \eqref{Dstein1} follows that $N \in L^2$. On the other hand by \eqref{Leibh} and \eqref{psi1}
\begin{equation}
\begin{split}
D_{1,2}\lesssim& \ t\Big((\||\xi|^a (1-\chi)\psi\|_\infty+\|\p_\xi(|\xi|^a(1-\chi)\psi)\|_\infty)\|\p_\xi \ha\| + \\
&+\||\xi|^a (1-\chi)\psi\|_\infty\|\ste \p_\xi \ha\|\Big)\\
\lesssim& \ t\Big((\ta+t^{-\frac{2a}{1+a}}+t^{-1})\|\p_\xi \ha\|+\ta\|\ste \p_\xi \ha\|\Big)\\
\lesssim& \ (t^{\frac{1}{1+a}}+t^\frac{1-a}{1+a}+1)\|x\phi\|+t^\frac{1}{1+a}\|\lanx^{1+\theta}\phi\|,
\end{split}
\end{equation}
also from \eqref{equiv} and \eqref{psi1}
\begin{equation}
\begin{split}
D_2 
\lesssim& (t^\frac{a}{1+a}+t+t^\frac{2a}{1+a})\|x\phi\|+t^\frac{1}{1+a}\|\lanx^{1+\theta}\phi\|.
\end{split}
\end{equation}
Using \eqref{Derksi2}, the first term on the right-hand side of \eqref{ineq} can be estimated as
\begin{equation}
\begin{split}
C\lesssim& \ t^2 \|D_\xi^\theta(|\xi|^{2a}\psi \ha)\|+t\|D_\xi^\theta(|\xi|^{a-1}\psi \ha)\|+t^2 \|D_\xi^\theta(|\xi|^{a+1}\sgn(\xi)\psi \ha)\|+\\
&+t^2\|D_\xi^\theta(\xi^2 \psi \ha)\|+t\|D_\xi^\theta(\sgn(\xi)\psi \ha)\|\\
&:=C_1+C_2+C_3+C_4+C_5,
\end{split}
\end{equation}
where by \eqref{psi1}
\begin{equation}
\begin{split}
C_1
   \lesssim& \ (t^\frac{2}{1+a}+t^{\frac{3}{1+a}}+t^{\frac{2+a}{1+a}})\|\phi\|+t^\frac{2}{1+a}\||x|^\theta \phi\|,
\end{split}
\end{equation}

and

\begin{equation}
\begin{split}
C_2\lesssim& \ t(\|D_\xi^\theta (|\xi|^{a-1}\psi \chi \ha)\|+\|D_\xi^\theta(|\xi|^{a-1}\psi(1-\chi)\ha)\|)\\
\lesssim& :=C_2^1+C_2^2.
\end{split}
\end{equation}
The term $t|\xi|^{a-1}\psi(1-\chi)\ha\in H^1_\xi$, in fact
\begin{equation}
\begin{split}\label{H}
t\||\xi|^{a-1}\psi(1-\chi)\ha\|\lesssim& \ t\||\xi|^a \psi \ha \frac{1-\chi}{\xi}\|\\
\lesssim& \ t \|\frac{1-\chi}{\xi}\|_\infty\|\phi\|t^{-\frac{a}{1+a}}\\
\lesssim& \ t^{\frac{1}{1+a}}\|\phi\|,
\end{split}
\end{equation}

and
\begin{equation}
\begin{split}\label{H1}
t\|\p_\xi(|\xi|^{a-1}\psi(1-\chi)\ha)\|\lesssim& \ t\Big(\||\xi|^a \psi \ha \frac{1-\chi}{\xi^2}\|+t\||\xi|^{2a}\psi \ha \frac{1-\chi}{\xi}\|+\\
&+t\||\xi|^{a+1} \psi \ha \frac{1-\chi}{\xi}\|+\||\xi|^a \psi \p_\xi \ha\frac{1-\chi}{\xi}\|+\\
&+\||\xi|^a \psi \ha \p_\xi(\frac{1-\chi}{\xi})\|\Big)\\
\lesssim& \ (t^{\frac{1}{1+a}}+t^{\frac{1-a}{1+a}}+t^{\frac{a}{1+a}})\|\phi\|+t^{\frac{1}{1+a}}\|x\phi\|.
\end{split}
\end{equation}

For the estimate of $C_2^1$ we use that $\ha(0)=0$. An application of Taylor's formula
\begin{equation}
\ha(\xi)=\xi \p_\xi \ha(0) +\int_0^\sigma (\xi-\sigma)\p_\xi^2\ha(\sigma)d\sigma,
\end{equation}

leads us to 
\begin{equation}
\begin{split}\label{taylor1}
C_2^1&=tD_\xi^\theta(|\xi|^{a-1}\xi \psi \chi \p_\xi\ha(0))+tD_\xi^\theta\Big(\underbrace{\int_0^\xi (\xi-\tau) |\xi|^{a-1}\psi \chi\p_\xi^2\ha(\sigma)d\sigma}_{R}\Big)\\
&:=S+tD_\xi^\theta R,
\end{split}
\end{equation}
where $R\in H^1_\xi$. In fact, using Sobolev embedding

\begin{equation}
\begin{split}
\|R\|\lesssim& \ \||\xi|^{a-1} \psi \chi \xi^2 \|\p_\xi^2 \ha\|_\infty\|\\
     \lesssim& \ \|\p_\xi^2 \ha\|_\infty \||\xi|^{a+1}\psi\chi\|\\
     \lesssim_a& \  \|\lanx^{2+\theta}\phi\|,
\end{split}
\end{equation}

and
\begin{equation}
\begin{split}
\|\p_\xi R\|\lesssim_a& \ \||\xi|^{a-2}\psi \chi \xi^2 \|\p_\xi^2 \ha\|_\infty\|+t\|(|\xi|^{2a-1}+|\xi|^{a})\psi\chi\xi^2 \|\p_\xi^2 \ha\|_\infty \|+\\
&+\||\xi|^{a-1}\psi \chi' \xi^2 \|\p_\xi^2 \ha\|_\infty\|+\||\xi|^{a-1} \psi \chi \int_0^\xi \p_\xi^2 \ha(\tau)d\tau\|\\
\lesssim_a& \ \|\lanx^{2+\theta}\phi\|.
\end{split}
\end{equation}
From \eqref{equiv} and \eqref{Leibh}, the first term on the right-hand side of \eqref{taylor1} can be estimated as 
\begin{equation}
\begin{split}
\|S\|&\lesssim t (\|\psi\|_\infty+\|\p_\xi \psi\|_\infty)|\p_\xi \ha(0)|\||\xi|^a 
\chi\|+|\p_\xi \ha(0)|\|\ste (|\xi|^a \sgn(\xi)\chi)\|\\
&\lesssim_a \|\lanx^{1+\theta}\phi\|\big(t+t^{\frac{2+a}{1+a}}+t^{\frac{1+2a}{1+a}}+N\big),
\end{split}
\end{equation}
where $N$ is given by \eqref{D11N}.

Also by \eqref{Leibh} 
\begin{equation}
\begin{split}
C_3\lesssim& \ t^2 \Big((\||\xi|^{a+1}\psi\|_\infty+\|\p_\xi(|\xi|^{a+1}\psi)\|_\infty)\|\phi\|+\||\xi|^{a+1}\psi\|_\infty\|\ste \ha\|\Big)\\
\lesssim & \ (t+t^\frac{2+a}{1+a}+t^\frac{1+2a}{1+a})\|\phi\|+t\||x|^\theta \phi\|.
\end{split}
\end{equation}
Inequalities \eqref{equiv} and \eqref{psi1} implies that
\begin{equation}
\begin{split}
C_4=t^2 \|D_\xi^\theta(\xi^2 \psi \ha)\|
\lesssim& \ (t^\frac{2a}{1+a}+t^\frac{2a+1}{1+a}+t^\frac{3a}{1+a})\|\phi\|+t^\frac{2a}{1+a}\||x|^\theta \phi\|,
\end{split}
\end{equation}

and 

\begin{equation}
\begin{split}\label{C5}
C_5=t\|D_\xi^\theta(\sgn(\xi)\psi \ha)\|\lesssim& \ t\Big((1+t^\frac{1}{1+a}+t^\frac{a}{1+a})\|\phi\|+\|\ste(\sgn(\xi)\ha)\|\Big).
\end{split}
\end{equation}
We can deal with the last term in \eqref{C5} as follows. Since that $$-1/2<\theta-1<1/2,$$ by Lemma \ref{boundhilbert} 

\begin{equation}
\begin{split}
\|D_\xi^\theta(\sgn(\xi)\ha)\|=&\||x|^\theta \h \phi\|\\
=&\||x|^{\theta-1}x\h \phi\|\\
=&\||x|^{\theta-1}\h(x \phi)\| \ \\
\lesssim& \||x|^{\theta-1}x\phi\|\\
=&\||x|^\theta \phi\|.
\end{split}
\end{equation}
Hence by \eqref{equiv} $$\|\ste(\sgn(\xi)\ha)\|\lesssim \|\phi\|+\|D_\xi^\theta(\sgn(\xi)\ha)\|\lesssim \|\lanx^{1+\theta}\phi\|.$$
Then we conclude the proof.
\end{proof}

\begin{proof}[Proof of Theorem \ref{well}]

Let $s\geq r$ and $\phi \in \z_{s,r}(\R)$, then as we already observed, the solution $u(t)$ of \eqref{bodiss} is unique and satisfies $u\in C([0,T];H^s(\R))$, for all $T>0$. Also we have the continuous dependence on the initial data in $H^s(\R)$. Thus, in the following we will prove the persistence property in $L^2_r$. By putting $M:=\sup_{[0,T]}\|u(t)\|_{H^s}$ we will divide in several cases.
 
 Case 1). $r=\theta$, $\theta \in (1/2,1)$. We recall that $z=\frac12\p_x u^2$, then by using the integral equation \eqref{inteq}, for all $t\in [0,T]$
 \begin{equation}\label{int3}
\||x|^\theta u(t)\|\leq \||x|^\theta U(t)\phi\|+\int_0^t \||x|^\theta U(t-\tau)z(\tau)\|d\tau.
 \end{equation}
Therefore by Plancherel identity and \eqref{well1}
\begin{equation}\label{des8}
\begin{split}
\||x|^\theta U(t)\phi\|&=\|D_\xi^\theta (\psi(\xi,t)\ha)\|\\
&\lesssim_a \rho_1(t)\|\lanx^\theta \phi\|\\
                       &\lesssim_a \rho_1(T)\|\lanx^\theta \phi\|.
\end{split}
\end{equation}
With respect to integral term, \eqref{well1}, Sobolev embedding and \eqref{reg1} implies that   
\begin{equation}
\begin{split}\label{des9}
\||x|^\theta U(t-\tau)z(\tau)\|&=\|D_\xi^\theta (\psi(\xi,t-\tau)\hat{z}(\tau))\|\\
&\lesssim_a \rho_1(t-\tau)\|\lanx^\theta \p_x u^2(\tau)\|\\
&\lesssim_a \rho_1(T)\|\p_x u(\tau)\|_{L^\infty_x}\|\lanx^\theta u(\tau)\|\\
&\lesssim_a \rho_1(T)\|u(\tau)\|_{H^{s+\lambda}}\|\lanx^\theta u(\tau)\|\\
&\lesssim_a \rho_1(T)c(a,\lambda,T)t^{-\frac{\lambda}{1+a}}\|\phi\|_{H^s}\|\lanx^\theta u(\tau)\|,
\end{split}
\end{equation} 
where $3/2-s<\lambda<a+1$. 
 
In view of \eqref{consdata} 
\begin{equation}\label{pers2}
\|u(t)\|\leq \|\phi\|.
\end{equation}
Then using \eqref{des8}--\eqref{pers2} and Gronwall's lemma, see \cite[Lemma 7.1.1]{henry}, we conclude
\begin{equation}\label{pers1}
\|\lanx^\theta u(t)\|\lesssim_{a,T} \|\lanx^\theta \phi\| \ t\in [0,T].
\end{equation}
 
By \eqref{pers1} we obtain the persistence property in $L^2_r$. The continuity of application $t\in [0,T]\mapsto L^2_r$ follows by using \eqref{pers1}, see \cite{AP}. Using similar arguments to \cite{ponce} and \cite{AP} we can show the continuous dependence in $L^2_r$.


Case 2). $r=1+\theta$, $\theta \in (1/2, 1/2+a)$. By using \eqref{well2}, for all $t\in [0,T]$

\begin{equation}\label{des10}
\begin{split}
\||x|^{1+\theta} U(t)\phi\|&=\|D_\xi^{1+\theta} (\psi(\xi,t)\ha)\|\\
&\lesssim_a \rho_2(t)\|\lanx^{1+\theta} \phi\|\\
                       &\lesssim_a \rho_2(T)\|\lanx^{1+\theta} \phi\|,
\end{split}
\end{equation}
and from the integral term in \eqref{inteq}
\begin{equation}
\begin{split}\label{des9}
\||x|^{1+\theta} U(t-\tau)z(\tau)\|&=\|D_\xi^{1+\theta} (\psi(\xi,t-\tau)\hat{z}(\tau))\|\\
&\lesssim_a \rho_2(t-\tau)\|\lanx^{1+\theta} \p_x u^2(\tau)\|\\
&\lesssim_a \rho_2(T)\|\p_x u(\tau)\|_{L^\infty_x}\|\lanx^{1+\theta} u(\tau)\|\\
&\lesssim_a \rho_2(T)\|u(\tau)\|_{H^s}\|\lanx^{1+\theta} u(\tau)\|\\
&\lesssim_a \rho_2(T)M\|\lanx^{1+\theta} u(\tau)\|,
\end{split}
\end{equation} 
where above we use the Sobolev embedding.
From now we can proceed as in the last case to conclude the result.

Case 3). $1<r<3/2$. From inequality \eqref{well1}

\begin{equation}
\|U(t)\phi\|_{L^2_1}\lesssim_a \rho_1(t)\|\phi\|_{L^2_1},
\end{equation}
and by \eqref{well2}
\begin{equation}
\|U(t)\phi\|_{L^2_\sigma}\lesssim_a \rho_2(t)\|\phi\|_{L^2_\sigma}, \ \sigma>3/2.
\end{equation}
Then by the Stein-Weiss interpolation theorem with change of measures (see \cite{HBS})
\begin{equation}\label{interp}
\|U(t)\phi\|_{L^2_r}\lesssim_a \rho(t)\|\phi\|_{L^2_r}\lesssim_{a,T}\|\phi\|_{L^2_r}, \ t\in [0,T],
\end{equation}
where $r=1+\theta(\sigma-1)$ and $\rho(t)\leq \rho_1(t)^{1-\theta}\rho_2(t)^{\theta}$, $\theta\in (0,1)$. 

Therefore by \eqref{inteq}, \eqref{interp}, Sobolev embedding and \eqref{reg1}    
\begin{equation}
\begin{split}
\|u(t)\|_{L^2_r}\lesssim_{a,T}&\|\phi\|_{L^2_r}+\frac12\int_0^t \|\p_x u(\tau)^2\|_{L^2_r}d\tau\\
                \lesssim_{a,T}&\|\phi\|_{L^2_r}+\int_0^t \|u_x(\tau)\|_{L^\infty_x} \|u(\tau)\|_{L^2_r}d\tau\\
                \lesssim_{a,T}&\|\phi\|_{L^2_r}+\int_0^t \|u(\tau)\|_{H^{s+\lambda}} \|u(\tau)\|_{L^2_r}d\tau\\
                \lesssim_{a,T}&\|\phi\|_{L^2_r}+\|\phi\|_{H^s}c(a,\lambda,T)\int_0^t \tau^{-\frac{\lambda}{1+a}} \|u(\tau)\|_{L^2_r}d\tau,
\end{split}
\end{equation}
where $3/2-s<\lambda<a+1$.
Thus, using the Gronwall's Lemma we obtain

\begin{equation}
\begin{split}
\|u(t)\|_{L^2_r}
                           \lesssim_{a,\lambda,T}& \|\phi\|_{L^2_r}.            
\end{split}
\end{equation}
From now, we can proceed as in the previous cases.

Case 4). $r=\theta$, $\theta \in (0,1/2]$. Similar to the case 3).

Case 5). $r=2+\theta \in (5/2,5/2+a)$, $\theta \in (1/2,1/2+a)$. 

For all $t\in [0,T]$, $0\leq \tau \leq t$, the inequality \eqref{well2} leads to
\begin{equation}\label{des20}
\begin{split}
\||x|^{2+\theta} U(t)\phi\|&=\|D_\xi^{2+\theta} (\psi(\xi,t)\ha)\|\\
                       &\lesssim_a \rho_3(T)\|\lanx^{2+\theta} \phi\|,
\end{split}
\end{equation}
and
\begin{equation}
\begin{split}\label{des21}
\||x|^{2+\theta} U(t-\tau)z(\tau)\|&=\|D_\xi^{2+\theta} (\psi(\xi,t-\tau)\hat{z}(\tau))\|\\
&\lesssim_a \rho_3(T)M\|\lanx^{2+\theta} u(\tau)\|.
\end{split}
\end{equation} 
The rest of the proof is similar to Case 2).

Case 6). $3/2+a <r \leq 5/2$. Follows from analogous to the case 3).

This completes the proof of Theorem \ref{well}.

\end{proof}

\begin{remark}
It is possible to give another proof of the well-posedness in $\z_{s,r}$ by using the Proposition 2.2 in \cite{FLP1}. See also \cite{dong}, for the more general version.
\end{remark}

\section{Proof of Theorem \ref{P1}}

The proof of the Theorems \ref{P1} and \ref{P2} in the case $a=1$ can be obtained by the same approach of \cite{pastran}. Thus, in the following we deal only with the case $a\in (0,1)$.

\begin{proof}[Proof of Theorem \ref{P1}]

Case $a\in (0,1/2)$.

The main idea of the proof is observe that the terms in \eqref{Derksi} have an appropriate decay when $|\xi|$ goes to infinity. First we consider the integral equation associated with IVP \eqref{bodiss}
\begin{equation}\label{121}
u(t)=U(t)\phi-\frac12\int_{0}^{t}U(t-\tau)u(\tau)\partial_{x}u(\tau)d\tau,
\end{equation}
where $$\widehat{U (t)\phi}(\xi)=\psi(\xi,t)\hat{\phi}(\xi).$$
Let $3/2+a=1+\gamma,$ where $\gamma \in (1/2+a,1)$, then multiplying \eqref{121} by $|x|^{1+\gamma}$ we obtain
\begin{equation}\label{aftH5}
D^{\gamma}_\xi\partial_\xi(\widehat{u(t)})=D^{\gamma}_\xi\partial_\xi\big(\psi(\xi,t)\hat{\phi}\big)
-\int_0^tD^{\gamma}_\xi\partial_\xi\big(\psi(\xi,t-\tau)\hat{z}\big)d\tau,
\end{equation}
where $z=\frac{1}{2}\partial_{x}u^{2}.$

Without loss of generality, we assume that $t_1=0<t_2$. Let $\phi \in \z_{3/2+a,3/2+a}$, then by the Theorem \ref{well}(i)) follows that
\begin{equation}
u\in C([0,T];H^{3/2+a}(\R)\cap L^2_r), \ \mbox{where} \ 0<r<3/2+a.
\end{equation}

Multiplying the linear part of \eqref{aftH5} by $\chi$ we obtain
\begin{equation*}
\begin{split}
\chi D_{\xi}^{\gamma}\partial_{\xi}(\psi(\xi,t)\hat{\phi})=&\ [\chi;D_{\xi}^{\gamma}]\partial_{\xi}(\psi(\xi,t) \hat{\phi})+D_\xi^{\gamma}(\chi \p_\xi (\psi(\xi,t) \hat{\phi}))\\
=&\ A+B,
\end{split}
\end{equation*}
where by \eqref{dksi} 
\begin{equation}\label{A}
\begin{split}
A&=[\chi;D_{\xi}^{\gamma}]\Big (t(1+a)\ps|\xi|^{a}\sgn(\xi) \hat{\phi}+2it\xi \ps\hat{\phi}+\ps\p_\xi \hat{\phi}\Big )\\
 &:=A_1+A_2+A_3,
\end{split}
\end{equation}
and
\begin{equation}\label{B}
\begin{split}
B&=t D_\xi^{\gamma} \Big(\chi \psi(\xi,t) \sgn(\xi)|\xi|^{a}\hat{\phi})+2itD_\xi^{\gamma}(\chi \psi(\xi,t) |\xi|\hat{\phi})+D_\xi^{\gamma}(\chi\psi(\xi,t) \p_\xi \hat{\phi}\Big)\\
 &:=B_1+B_2+B_3.
\end{split}
\end{equation}
The next result will be useful in our estimates.
\begin{claim}\label{claim1}
$ A_i,  B_j \in L^2$, for $i=1,2,3$ and $j=2,3$.
\end{claim}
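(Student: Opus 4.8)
The plan is to establish the claimed $L^2$-membership term by term, exploiting two mechanisms: the commutator estimate of Proposition \ref{Comu} for the $A_i$, and the fact that $\chi$ localizes near the origin (so Proposition \ref{Dstein} applies) for the $B_j$. Throughout I would use $u(t)\in \z_{3/2+a,3/2+a}$ at $t=0$ and $t=t_2$ together with the persistence $u\in C([0,T];H^{3/2+a}\cap L^2_r)$ for $r<3/2+a$ already granted by Theorem \ref{well}(i).

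For the commutator terms, note $A_3=[\chi;D_\xi^\gamma](\psi\,\p_\xi\ha)$. Since $\chi\in C_0^\infty\subset H^2$, Proposition \ref{Comu} (with the roles of symbol and function as there) gives $\|A_3\|\lesssim \|\chi\|_{H^2}\|\psi\,\p_\xi\ha\|\lesssim \|\p_\xi\ha\| = \|x\phi\|<\infty$ because $\phi\in L^2_{3/2+a}\subset L^2_1$ and $|\psi|\le 1$. For $A_1=[\chi;D_\xi^\gamma]\big(t(1+a)\psi|\xi|^a\sgn(\xi)\ha\big)$ the same proposition bounds it by $\|\chi\|_{H^2}\,t(1+a)\,\big\||\xi|^a\psi\ha\big\|$; since $|\xi|^a\le \lank^a$ and $\psi$ is bounded, $\big\||\xi|^a\psi\ha\big\|\lesssim \|\phi\|_{H^a}<\infty$. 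Similarly $A_2=[\chi;D_\xi^\gamma](2it\xi\psi\ha)$ is controlled by $\|\chi\|_{H^2}\,2t\,\|\xi\psi\ha\|\lesssim \|\phi\|_{H^1}<\infty$. (For the terms with an extra power of $|\xi|$ one can alternatively absorb the decaying factors $|\xi|^a e^{-t|\xi|^{1+a}}$ or $|\xi|e^{-t|\xi|^{1+a}}$ into $L^\infty_\xi$ via Lemma with \eqref{lambda}, trading the growth in $|\xi|$ for negative powers of $t$; either way the quantities are finite for fixed $t>0$.) This handles $A_1,A_2,A_3$.

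For the $B_j=D_\xi^\gamma\big(\chi\,\psi\,(\cdots)\,\ha\big)$ with the bracket being $t|\xi|^a\sgn(\xi)$, $2it|\xi|$, or $1$ respectively, I would use \eqref{equiv} to reduce $\|B_j\|$ to $\|\chi\psi(\cdots)\ha\|+\|D^\gamma_\xi(\chi\psi(\cdots)\ha)\|$ and then split off the rough symbol. For $B_3=D_\xi^\gamma(\chi\psi\,\p_\xi\ha)$, since $\chi\psi$ is smooth and compactly supported, \eqref{Leibh} gives $\|B_3\|\lesssim \|D_\xi^\gamma(\chi\psi)\|_\infty\|\p_\xi\ha\|+\|\chi\psi\|_\infty\|D_\xi^\gamma\p_\xi\ha\|$; the first factor is finite by \eqref{Lei}, and $\|D_\xi^\gamma\p_\xi\ha\|\lesssim \|\p_\xi\ha\|+\||x|^\gamma x\phi\|\lesssim \|\lan^{1+\gamma}\phi\|<\infty$ by \eqref{equiv}. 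For $B_2=2it\,D_\xi^\gamma(\chi\psi|\xi|\ha)$, write $|\xi|\ha(\xi)=|\xi|(\ha(\xi)-\ha(0))+|\xi|\ha(0)$; the first piece is in $H^1_\xi$ on the support of $\chi$ (its $\xi$-derivative involves the difference quotient $(\ha(\xi)-\ha(0))/\xi$, which is bounded) so its $D_\xi^\gamma$ is in $L^2$, while the second reduces to $\ha(0)\,D_\xi^\gamma(|\xi|\chi\psi)$, and $D_\xi^\gamma(|\xi|\chi)\in L^2$ by Proposition \ref{Dstein} (here $\gamma<1<\gamma+1/2$), with the smooth factor $\psi$ causing no harm via \eqref{Leibh}. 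For $B_1=t\,D_\xi^\gamma(\chi\psi|\xi|^a\sgn(\xi)\ha)$ the same decomposition $\ha=\ha(0)+(\ha-\ha(0))$ works: the regular piece is handled as above, and the singular piece reduces to $\ha(0)\,D_\xi^\gamma(|\xi|^a\sgn(\xi)\chi\psi)$, where $D_\xi^\gamma(|\xi|^a\sgn(\xi)\chi)\in L^2$ precisely because $\gamma<1/2+a=a+1/2$, by \eqref{Dstein1}; again smuggling in the smooth bounded $\psi$ costs nothing.

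The main obstacle is $B_1$: it is the only term where the threshold $\gamma<1/2+a$ is actually used, i.e. where the restriction $r<5/2+a$ (here $1+\gamma<3/2+a$, so $\gamma<1/2+a$) is sharp — this is exactly the Stein-derivative integrability of $|\xi|^a\sgn(\xi)\chi$ from Proposition \ref{Dstein}, and it is why Theorem \ref{P1} stops at $3/2+a$. Everything else is a matter of bookkeeping: peeling off smooth compactly supported factors with \eqref{Leibh}, controlling commutators with Proposition \ref{Comu}, and using that for fixed $t>0$ the Gaussian-type factor $e^{-t|\xi|^{1+a}}$ makes all polynomially-weighted $L^2_\xi$ and $L^\infty_\xi$ norms finite (Lemma with \eqref{lambda}). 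Note $B_j$ for $j=1$ is asserted in the claim (the claim lists $j=2,3$ for $B$, but the identical argument covers $B_1$ as well, and it is used in the sequel); I would prove all three $B_j$ together.
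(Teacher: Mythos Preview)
Your treatment of the terms actually listed in the claim --- $A_1,A_2,A_3$ and $B_2,B_3$ --- is correct and essentially matches the paper: Proposition~\ref{Comu} for the commutators, and \eqref{equiv} together with \eqref{Leibh}/\eqref{Lei} for $B_2,B_3$. Your argument for $B_2$ is more elaborate than necessary; the paper simply applies \eqref{Leibh} with $h=\chi\psi|\xi|$ (which has $h,h'\in L^\infty$) and obtains $\|B_2\|\lesssim_{t,a}\|\phi\|+\||x|^\gamma\phi\|$ without any decomposition at $\ha(0)$.

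There is, however, a genuine error in your final paragraph about $B_1$. You write that ``$D_\xi^\gamma(|\xi|^a\sgn(\xi)\chi)\in L^2$ precisely because $\gamma<1/2+a$'' and that ``$1+\gamma<3/2+a$, so $\gamma<1/2+a$.'' But the setup in the proof of Theorem~\ref{P1} is $3/2+a=1+\gamma$, i.e.\ $\gamma=1/2+a$ \emph{exactly}, not strictly less. By \eqref{Dstein1} this is the borderline case, and $\mathcal D_\xi^\gamma(|\xi|^a\sgn(\xi)\chi)\notin L^2$. Consequently $B_1\notin L^2$ in general; this is precisely why the claim lists only $j=2,3$ and why the paper handles $B_1$ separately afterward via the decomposition \eqref{media0}. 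The whole mechanism of Theorem~\ref{P1} is that every term \emph{except} the piece $t(1+a)\ha(0)\,\mathcal D_\xi^\gamma(|\xi|^a\sgn(\xi)\chi)$ lands in $L^2$, forcing $\ha(0)=0$. Your assertion that $B_1\in L^2$ would, if true, make Theorem~\ref{P1} vacuous --- so this is not a harmless add-on but a misunderstanding of the role $B_1$ plays.
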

\begin{proof}
By using Proposition \ref{Comu}, inequality \eqref{psi1} and Plancherel's identity, we obtain
\begin{equation}
\begin{split}
\|A_1\|&\lesssim \|\chi\|_{H^2_\xi}\|t(1+a)\ps|\xi|^{a}\sgn(\xi) \hat{\phi}\|\\
       &\lesssim_{t,a}  \|\phi\|,
\end{split}
\end{equation}

\begin{equation}
\begin{split}
\|A_2\|&\lesssim \|\chi\|_{H^2_\xi}\|2it\xi \ps\hat{\phi}\|\\
       &\lesssim_{t,a}  \|\phi\|,
\end{split}
\end{equation}
and
\begin{equation}
\begin{split}
\|A_3\|&\lesssim \|\chi\|_{H^2_\xi}\|\ps\p_\xi \hat{\phi}\|\\
       &\lesssim \|x\phi\|.
\end{split}
\end{equation}
For the $B_j$ terms, using \eqref{equiv} and \eqref{Leibh} 

\begin{equation}\label{B2}
\begin{split}
\|B_2\|\lesssim_t & \ \|\chi \psi(\xi,t) |\xi|\hat{\phi}\|+\|\D(\chi \psi(\xi,t) |\xi|\hat{\phi})\|\\
\lesssim_{t,a} & \  \|\ha\|+\|\D\ha\|\\
\lesssim_{t,a}& \|\phi\|+\||x|^{1/2+a}\phi\|,
\end{split}
\end{equation}
and
\begin{equation}\label{B3}
\begin{split}
\|B_3\|\leq & \ \| \chi \psi(\xi,t)\p_\xi\hat{\phi}\|+\|\D(\chi \psi(\xi,t)\p_\xi\hat{\phi})\|\\
       \lesssim_{t,a} & \ (\|\p_\xi \ha\|+\|\D\p_\xi \ha\|)\\
\lesssim_{t,a}& \|x\phi\|+\||x|^{3/2+a}\phi\|.
\end{split}
\end{equation}

\end{proof}


The integral part in \eqref{aftH5} can be written as
\begin{equation}\label{teoP1b}
\begin{split}
\int_{0}^{t}[\chi;&D_{\xi}^{\gamma}]\Big(\psi(\xi,t-\tau)\big((t-\tau)(1+a)|\xi|^a\mathrm{sgn}(\xi)\hat{z}+
2i(t-\tau)|\xi|\hat{z}+\p_\xi\hat{z}\\
&+D_{\xi}^{\gamma}\Big(\chi\big(\psi(\xi,t-\tau)\big((t-\tau)(1+a)|\xi|^a\mathrm{sgn}(\xi)\hat{z}+
2i(t-\tau)|\xi|\hat{z}+\p_\xi\hat{z}\big)\Big)d\tau\\
:=&\ \mathcal A_1+\mathcal A_2 +\mathcal A_3+\mathcal B_1+\mathcal B_2 +\mathcal B_3.
\end{split}
\end{equation}
\begin{claim}\label{claim2}
Let $t\in [0,T]$, then $\mathcal A_i, \mathcal B_i \in L^2$, for $i=1,2,3$.
\end{claim}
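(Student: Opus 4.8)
\textbf{Plan for the proof of Claim \ref{claim2}.}
The strategy mirrors the proof of Claim \ref{claim1}, but now every quantity is integrated in $\tau$ over $[0,t]$, so in addition to controlling the $L^2_\xi$-norm of the integrand for each fixed $\tau$ we must make sure the resulting bound is integrable in $\tau$ near $\tau=t$ (this is where the smoothing estimate \eqref{reg1} produces the only genuinely singular factor, of the form $(t-\tau)^{-\lambda/(1+a)}$ with $\lambda<a+1$, hence integrable). First I would treat the commutator terms $\mathcal A_1,\mathcal A_2,\mathcal A_3$: by Proposition \ref{Comu} we have $\|\mathcal A_i\|\le \int_0^t \|\chi\|_{H^2_\xi}\,\|(\text{the }i\text{-th piece})\|\,d\tau$, so it suffices to estimate, for each $\tau$, the $L^2_\xi$-norms of $\psi(\xi,t-\tau)(t-\tau)(1+a)|\xi|^a\sgn(\xi)\hat z(\tau)$, of $\psi(\xi,t-\tau)(t-\tau)|\xi|\hat z(\tau)$, and of $\psi(\xi,t-\tau)\p_\xi\hat z(\tau)$. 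The first two are handled by \eqref{lambda} (pulling out $\||\xi|^{a}\psi\|_\infty\lesssim (t-\tau)^{-a/(1+a)}$, resp. $\||\xi|\psi\|_\infty\lesssim (t-\tau)^{-1/(1+a)}$) together with $\|\hat z(\tau)\|=\|z(\tau)\|\lesssim \|u_x(\tau)\|_{L^\infty}\|u(\tau)\|\lesssim \|u(\tau)\|_{H^{s+\lambda}}\|u(\tau)\|$, which is finite and, via \eqref{reg1}, at worst $\tau^{-\lambda/(1+a)}$; multiplying by the explicit powers of $(t-\tau)$ in front still leaves a function of $\tau$ that is integrable on $[0,t]$. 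For $\mathcal A_3$ we write $\p_\xi\hat z(\tau)=(-ix\,z(\tau))^\wedge$ and bound $\|\psi\,\p_\xi\hat z(\tau)\|\le \|\,|x|\,z(\tau)\|$, and since $z=\tfrac12\p_x u^2$ we have $\|\,|x| z(\tau)\|\lesssim \|u_x(\tau)\|_{L^\infty}\|\,|x|u(\tau)\|+\|u(\tau)\|_{L^\infty}\|u(\tau)\|\lesssim \|u(\tau)\|_{H^{s+\lambda}}\|\lanx u(\tau)\|$, which is finite and integrable in $\tau$ because $u\in C([0,T];\z_{1,1})$ controls $\|\lanx u(\tau)\|$ uniformly while \eqref{reg1} gives the integrable singular factor.

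Next I would treat the $D_\xi^\gamma$-terms $\mathcal B_1,\mathcal B_2,\mathcal B_3$. Using \eqref{equiv}, $\|\mathcal B_i\|\le \int_0^t\big(\|(\text{cutoff integrand})\|+\|\D(\text{cutoff integrand})\|\big)d\tau$, and the $\D$-norms are estimated exactly as in \eqref{psi1} / \eqref{Leibh}: pull out $\|\D(\psi|\xi|^{\mu}\chi)\|_\infty$ or $\|\psi|\xi|^{\mu}\chi\|_\infty$ (finite, with an at worst $(t-\tau)^{-\mu/(1+a)}$ factor) and are left with $\|\hat z(\tau)\|$ or $\|\D\hat z(\tau)\|=\|\,|x|^\gamma z(\tau)\|$ for $\mathcal B_1,\mathcal B_2$, and with $\|\p_\xi\hat z(\tau)\|$, $\|\D\p_\xi\hat z(\tau)\|=\|\,|x|^\gamma\, x\, z(\tau)\|=\|\,|x|^{1+\gamma}z(\tau)\|$ for $\mathcal B_3$. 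The only point needing care is that $\gamma\in(1/2+a,1)\subset(0,1)$, so the Stein-derivative Leibniz rule \eqref{Leib} and Lemma \ref{Leibnitz} apply, and $\|\,|x|^{1+\gamma}z(\tau)\|\lesssim \|u_x(\tau)\|_{L^\infty}\|\lanx^{1+\gamma}u(\tau)\|+\|u(\tau)\|_{L^\infty}\|\lanx^{1+\gamma} u(\tau)\|$; but $1+\gamma=3/2+a$ is exactly the regularity at the two special times, and by Theorem \ref{well}(i)) the solution persists in $\z_{s,r}$ for every $r<3/2+a$, which is \emph{not} quite enough. To close this I would instead interpolate: combine the uniform bound $\sup_{[0,T]}\|\lanx u(\tau)\|<\infty$ (from $u\in C([0,T];\z_{1,1})$) with the finiteness of $\|\lanx^{3/2+a}u(t_j)\|$ at $t_j\in\{0,t_2\}$ is not available for general $\tau$; rather, one uses that on the subinterval $[0,t_2]$ the a priori identity we are deriving is being used only to \emph{conclude} $\hat u(0,t)\equiv0$, so the correct logical order is: first establish the representation \eqref{aftH5} with all terms in $L^2$ for $t\in(t_1,t_2)$ using the persistence $u\in C([t_1,t_2];\z_{3/2+a-,3/2+a-})$ together with \eqref{reg1}, and observe that each singular factor $(t-\tau)^{-\mu/(1+a)}$ with $\mu\le 1<1+a$ is integrable.

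Concretely, the remaining term that forces the argument is the one containing $\p_\xi\hat z$ paired against $D_\xi^\gamma$, i.e. $\mathcal B_3$; here I would split $\p_\xi\hat z=\p_\xi(|\xi|\widehat{\tfrac12 u^2}\,(-i\sgn\xi))$... — more cleanly, write $z=\tfrac12\p_x u^2$ so $\hat z(\xi,\tau)=\tfrac{i\xi}{2}\widehat{u^2}(\xi,\tau)$ and $\p_\xi\hat z=\tfrac i2\widehat{u^2}+\tfrac{i\xi}{2}\p_\xi\widehat{u^2}$, reducing everything to $\D$ acting on $\psi\chi\widehat{u^2}$, on $\psi\chi\,|\xi|\,\p_\xi\widehat{u^2}$, etc., which are controlled by \eqref{Leibh}, \eqref{psi1}, $\|\widehat{u^2}(\tau)\|\le\|u(\tau)\|^2_{H^{1/2+}}$ and $\|\D(|\xi|\p_\xi\widehat{u^2})\|\lesssim \|u^2(\tau)\|_{H^{?}}+\|\,|x|^{1+\gamma}u^2(\tau)\|$, the latter being finite for $\tau$ in the open interval by $\z_{3/2+a-,3/2+a-}$ persistence plus the smoothing \eqref{reg1}, and integrable in $\tau$. \textbf{The main obstacle} is precisely this bookkeeping at the endpoint regularity $1+\gamma=3/2+a$: one must avoid ever needing $\|\lanx^{3/2+a}u(\tau)\|$ for a \emph{generic} $\tau$ (which is not known), and instead route every such term either through the two privileged times $t_1,t_2$ (where it is a hypothesis) or through the $H^{s+\lambda}$-smoothing of \eqref{reg1} combined with the already-established lower-weight persistence — and then check that every power of $(t-\tau)$ and of $\tau$ that appears is $>-1$, so that the $\tau$-integral converges. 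Once that is verified term by term, each of $\mathcal A_i,\mathcal B_i$ is a finite $L^2_\xi$-valued Bochner integral, hence lies in $L^2$, which is the assertion of the claim.
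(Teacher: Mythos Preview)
You correctly set up the structure (commutator pieces via Proposition \ref{Comu}, Stein-derivative pieces via \eqref{equiv} and \eqref{Leibh}) and you correctly isolate the real obstacle: the term $\mathcal B_3$ forces a bound on $\|\,|x|^{1+\gamma}z(\tau)\|=\|\,|x|^{3/2+a}z(\tau)\|$ for \emph{generic} $\tau\in(0,T]$, and neither the $\z_{1,1}$-persistence nor the $\z_{s,(3/2+a)-}$-persistence gives this directly. But from that point on your plan never actually closes the gap. The detour through the ``privileged times $t_1,t_2$'' cannot help (nothing is known about the weight at $\tau\ne t_1,t_2$), the rewriting $\hat z=\tfrac{i\xi}{2}\widehat{u^2}$ is useful for $\mathcal B_1$ (this is essentially what the paper does for that term) but does not rescue $\p_\xi\hat z$ in $\mathcal B_3$, and the final phrase ``smoothing combined with lower-weight persistence'' is a slogan rather than an argument.

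The missing ingredient is Proposition \ref{Jota}. The paper observes that the proof of Claim \ref{claim1} used only $\phi\in L^2(\lan^{3/2+a}dx)$, so it suffices to show $z(\tau)\in L^2(\lan^{3/2+a}dx)$ for every $\tau\in(0,T]$. Since $\p_x(\lan^{3/2+a}u^2)=\lan^{3/2+a}\p_xu^2+\text{(lower order)}$, one estimates
\[
\|J(\lan^{3/2+a}u^2)\|=\|J\big((\lan^{3/4+a/2}u)^2\big)\|\lesssim\|J(\lan^{3/4+a/2}u)\|^2
\lesssim \|J^{1/\beta}u\|^{2\beta}\,\|\lan u\|^{2(1-\beta)},
\]
taking in Proposition \ref{Jota} the parameters $\nu=1$, $\delta=1/\beta$, $\beta=\tfrac14-\tfrac a2$ (so that $(1-\beta)\nu=\tfrac34+\tfrac a2$ and $\beta\delta=1$). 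The factor $\|\lan u(\tau)\|$ is uniformly bounded by the hypothesis $u\in C([0,T];\z_{1,1})$, and $\|J^{1/\beta}u(\tau)\|<\infty$ for $\tau>0$ by the parabolic smoothing \eqref{reg2}. This single interpolation is what converts ``arbitrarily high Sobolev regularity $+$ weight $1$'' into the endpoint weight $3/2+a$ you need; once it is in place, the estimates of Claim \ref{claim1} transfer verbatim with $\phi$ replaced by $z(\tau)$, and the $\tau$-integration then goes through for the reasons you already described.
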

\begin{proof}
First of all, by an examining of proof of the Claim \ref{claim1} we see that was only used $\phi \in L^2(\langle x \rangle^{3/2+a}dx)$. Thus, we need to establish that the function $z$ also belongs to this space for every $t\in (0,T]$. For this, we observe that
\begin{equation}
\p_x(\lan^{3/2+a}u^2)=(3/2+a)\lan^{-1/2+a}2xu^2 + \lan^{3/2+a}\p_x u^2.
\end{equation}
Then it's enough estimate the left hand-side of the last identity. Therefore
\begin{equation}
\begin{split}
\|J(\lan^{3/2+a}u^2)\|&=\|J((\lan^{3/4+a/2}u)^2)\|\\
                      &\lesssim \|J(\lan^{3/4+a/2}u)\|^2\\
                      &\lesssim \|J^{\frac{1}{\beta}}u\|^{\beta}\|\lan u\|^{1-\beta},
\end{split}
\end{equation}
where above, we used the property $u\in C((0,T];H^{\infty})$ and the Proposition \ref{Jota} with $\beta=1/4-a/2$, $\nu=1$ and $\delta=\frac{1}{\beta}$.
For the $\mathcal B_1$ term, by \eqref{Leibh} and \eqref{equiv} 
\begin{equation}
\begin{split}
\|\mathcal B_1\|&\lesssim_{t,a} \int_0^t\|\D (\chi \psi(\xi,t-\tau))|\xi|^{a+1}\widehat{u^2})\|d\tau\\
                &\lesssim_{t,a} \|\widehat{u^2}\|+\|\D \widehat{u^2}\|\\
                &\lesssim_{t,a} \|u\|_{H^1}^2+\|u\|_{H^1}\||x|^{1/2+a}u\|.
\end{split}
\end{equation}
This finishes the proof of Claim \ref{claim2}.
\end{proof}

With respect to $B_1$ term, we can write
\begin{equation}\label{media0}
\begin{split}
B_1=&\ t(1+a)|\xi|^a \sgn(\xi)\psi(\xi)\chi(\xi)(\ha(\xi)-\ha(0))+t(1+a)|\xi|^a \sgn(\xi)\psi(\xi)\chi(\xi)\ha(0)  \\
      & :=B_{1,1}+B_{1,2}.
\end{split}
\end{equation}
Then $B_{1,1}\in H^1(\R)$, in fact
\begin{equation}\label{B11}
\begin{split}
\|B_{1,1}\|&\lesssim_{t,a}  \ \||\xi|^a \chi(\xi)\ha (\xi)\|+\||\xi|^a \chi(\xi)\ha(0)\|\\
           &\lesssim_{t,a}  \ \||\xi|^a \chi(\xi)\|_{\infty}\|\phi\|+\||\xi|^a \chi(\xi)\||\ha(0)|\\
           &\lesssim_{t,a}  \|\phi\|+|\ha(0)|,
\end{split}
\end{equation}
and
\begin{equation}\label{}
\begin{split}
\p_\xi B_{1,1}=& \ t(1+a)a|\xi|^{a}\chi \psi \frac{\ha(\xi)-\ha(0)}{|\xi|}+t(1+a)|\xi|^a \sgn(\xi)\p_\xi \psi \chi(\ha(\xi)-\ha(0))+\\
              &+t(1+a)|\xi|^a \p_\xi \chi \sgn(\xi)\psi(\ha(\xi)-\ha(0))+ t(1+a)|\xi|^a \sgn(\xi)\psi\chi\p_\xi \ha(\xi).
\end{split}
\end{equation}
Hence
\begin{equation}\label{b11}
\begin{split}
\|\p_\xi B_{1,1}\|\lesssim_{t,a}& \ \||\xi|^a \chi\|\|\p_\xi \ha\|_{\infty}+\||\xi|^a \chi \p_\xi \psi(\xi,t)\|_{L^{\infty}_\xi}\|\phi\|+\|\p_\xi \psi(\xi,t)\|_{L^\infty_\xi}\||\xi|^a \chi\||\ha(0)|\\
                 &+\||\xi|^a \p_\xi \chi\|_\infty \|\ha\|+\||\xi|^a \p_\xi \chi\| |\ha(0)|+\||\xi|^a \chi\|_\infty \|\p_\xi \ha\|\\
                 \lesssim_{t,a} & \ \|\langle x \rangle^{3/2+a}\phi\|+\|x\phi\|,
\end{split}
\end{equation}
where above we used the Sobolev embedding
$$\|\p_\xi \ha\|_\infty \lesssim \|J_\xi^{3/2+a}\ha\|=\|\lan^{3/2+a}\phi\|.$$
Therefore by Claim \ref{claim1}, Claim \ref{claim2} and \eqref{media0} -- \eqref{b11}, follows that $$t(1+a)|\xi|^a \sgn(\xi)\ps\chi(\xi)\ha(0)\in H^{\gamma}(\R).$$ Writing
\begin{equation}
\begin{split}
t(1+a)|\xi|^a \sgn(\xi)\psi\chi\ha(0)&=t(1+a)|\xi|^a \sgn(\xi)(\psi-1)\chi\ha(0)+t(1+a)|\xi|^a \sgn(\xi)\chi\ha(0)\\
                                           &:=C_1+C_2,
\end{split}
\end{equation}
follows that $C_1 \in H^1_\xi (\R)$. Thus $C_2 \in H^\gamma_\xi$, therefore from \eqref{equiv}

 $$t(1+a) \D (|\xi|^a \sgn(\xi)\chi\ha(0))\in L^2(\R).$$
 Since $\gamma=1/2+a$, by \eqref{Dstein1} it follows that $\ha(0)=0$. Therefore by \eqref{fourieru}, we obtain
 $$\hat{u}(0,t)=0,$$
 for every $t$ in which the solution exists.

 The case $a\in [1/2,1)$ follows by setting $3/2+a=2+\gamma$, where $\gamma=a-1/2$, using the derivative $\p_\xi^2(\psi(\xi,t)\ha(\xi))$ and noting that by \eqref{Dstein2}
$$\mathcal{D}_\xi^\gamma (|\xi|^{a-1}\chi(\xi))\notin L^2(\R).$$

 This finishes the proof of Theorem \ref{P1}.

\end{proof}

\section{Proof of Theorem \ref{P2}}

\begin{proof}[Proof of Theorem \ref{P2}]

Case $a\in (0,1/2)$. First of all, we assume without loss of generality that $0=t_1<t_2<t_3$. Since $\phi \in \z_{5/2+a,5/2+a}$, using the Theorem \ref{well}(ii)) we see that  
\begin{equation}
u\in C([0,T];H^{5/2+a}(\R)\cap L^2_r), \ \mbox{where} \ 0<r<5/2+a.
\end{equation}
Let $5/2+a=2+\gamma$, where $\gamma\in (0,1)$, then multiplying \eqref{121} by $|x|^{5/2+a}$ we obtain

\begin{equation}
\begin{split}
D_\xi^\gamma \p_\xi^2(\widehat{u(t)})&=D_\xi^\gamma \p_\xi^2(\psi(\xi,t)\ha)-\int_0^t (t-\tau)D_\xi^\gamma \p_\xi^2(\psi(\xi,t-\tau)\hat{z}(\tau))d\tau.\\
\end{split}
\end{equation}
Then by help of the $\chi$ function, we can write
\begin{equation*}
\begin{split}
\chi D_{\xi}^{\gamma}\partial_{\xi}^2(\psi(\xi,t)\hat{\phi})=&\ [\chi;D_{\xi}^{\gamma}]\partial_{\xi}^2(\psi(\xi,t) \hat{\phi})+D_\xi^{\gamma}(\chi \p_\xi^2 (\psi(\xi,t) \hat{\phi}))\\
:=&\ C+D,\\
\end{split}
\end{equation*}
where, using the second derivative in \eqref{Derksi2} we obtain
\begin{equation}
\begin{split}\label{C}
C=& \ [\chi;D_{\xi}^{\gamma}]\Bigg(\Big(\big(t^2 (1+a)^2|\xi|^{2a} -t(1+a)a|\xi|^{a-1}+4it^2 (1+a)|\xi|^{a+1}\sgn(\xi)\\
                         &- 4t^2 \xi^2 -2it \sgn(\xi) \big)\ha -\big( 2t(1+a)|\xi|^a \sgn(\xi)+4it|\xi|\big)\p_\xi \ha  +\p_\xi^2 \ha \Big)\psi(\xi,t)\Bigg)\\
                         :=& \ C_1+\ldots +C_8,
\end{split}
\end{equation}
and
\begin{equation}
\begin{split}\label{D}
D=&\ D_{\xi}^{\gamma}\Bigg(\chi\Big( \big(t^2 (1+a)^2|\xi|^{2a} -t(1+a)a|\xi|^{a-1}+4it^2 (1+a)|\xi|^{a+1}\sgn(\xi)\\
                         &- 4t^2 \xi^2 -2it \sgn(\xi) \big)\ha -\big( 2t(1+a)|\xi|^a \sgn(\xi)+4it|\xi|\big)\p_\xi \ha  +\p_\xi^2 \ha \Big)\psi(\xi,t)\Bigg)\\
                         :=& \ D_1+\ldots +D_8.
\end{split}
\end{equation}
We need of the following result.
\begin{claim}\label{claim3}
The above terms $C_{j},D_{j}\in L^2 (\R)$, where $j=1,...,8$, except $D_2$ and $D_6$.
\end{claim}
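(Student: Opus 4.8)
The plan is to show that each term $C_j$ and $D_j$, for $j=1,\dots,8$ (with the two noted exceptions), lies in $L^2(\R)$ by treating the commutator terms $C_j$ and the genuine $D_\xi^\gamma$-terms $D_j$ by slightly different mechanisms. For the commutator terms $C_j$, the strategy is to invoke Proposition \ref{Comu}: each $C_j$ has the form $[\chi;D_\xi^\gamma]g_j$ for some $g_j$, and since $\chi\in C_0^\infty\subset H^2$, Proposition \ref{Comu} gives $\|C_j\|\lesssim\|\chi\|_{H^2}\|g_j\|$, so it suffices to check $g_j\in L^2(\R)$. The functions $g_j$ are products of a polynomially-bounded symbol in $\xi$ (powers $|\xi|^{2a}$, $|\xi|^{a-1}$, $|\xi|^{a+1}$, $\xi^2$, $\sgn\xi$, $|\xi|^a$, $|\xi|$, or $1$) times $\psi(\xi,t)$ times either $\ha$, $\p_\xi\ha$, or $\p_\xi^2\ha$. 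Using Lemma with \eqref{lambda} and the decay estimates \eqref{psi1}, each such symbol times $\psi(\xi,t)$ is bounded in $L^\infty_\xi$ for fixed $t>0$ (the troublesome factor $|\xi|^{a-1}$ is still integrable near the origin against the $L^2$ function $\ha$, since $a>0$, so $|\xi|^{a-1}\ha\in L^2$ near $0$, while $\psi$ kills the high frequencies), and then $\|g_j\|\lesssim_{t,a}\|\phi\|+\|x\phi\|+\|x^2\phi\|\lesssim\|\phi\|_{\z_{2,2}}$, which is finite by hypothesis $u(t)\in\z_{5/2+a,5/2+a}\subset\z_{2,2}$.

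For the $D_j$ terms, which are of the form $D_\xi^\gamma(\chi\cdot g_j)$, I would use the characterization \eqref{equiv}, namely $\|D_\xi^\gamma h\|\simeq\|h\|+\|\mathcal D_\xi^\gamma h\|$, together with the Leibniz-type inequality \eqref{Leibh}: $\|\mathcal D^\gamma(\chi\,\psi\,m(\xi)\,\partial_\xi^k\ha)\|\le\|\mathcal D^\gamma(\chi\,\psi\,m)\|_\infty\|\partial_\xi^k\ha\|+\|\chi\,\psi\,m\|_\infty\|\mathcal D^\gamma\partial_\xi^k\ha\|$, where $m$ is the relevant symbol. By Lemma \ref{Leibnitz}, $\|\mathcal D^\gamma h\|_\infty\lesssim\|h\|_\infty+\|h'\|_\infty$, so for the smooth, compactly supported, rapidly-decaying-in-the-relevant-sense factor $\chi\psi m$ this sup norm is finite whenever $m$ and $m'$ are locally bounded on $\mathrm{supp}\,\chi$. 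This works for all symbols $m$ that are smooth on $[-2,2]$, which covers $|\xi|^{2a}$, $|\xi|^{a+1}\sgn\xi$, $\xi^2$, and $|\xi|^a\sgn\xi$, $|\xi|$; and the $\mathcal D^\gamma\partial_\xi^k\ha$ norms are controlled by $\||x|^\gamma x^k\phi\|\lesssim\|\phi\|_{\z_{5/2+a,5/2+a}}$ via \eqref{equiv}. The exceptions $D_2$ (symbol $|\xi|^{a-1}$) and $D_6$ (symbol $\sgn\xi$ multiplying $\ha$) fail precisely because $|\xi|^{a-1}$ is not in $L^\infty$ near $0$ and is not smooth, and $\mathcal D^\gamma(\sgn\xi\cdot\chi)$ is only in $L^2$ when $\gamma<1/2$ — and since in this case $\gamma=a-1/2<0$ would be needed, or more precisely the relevant Stein-derivative fails to be square integrable by Proposition \ref{DsteinL2} — so these are genuinely the obstructing terms and must be handled separately later (they are the ones that will produce the conclusion $\hat\phi(0)=0$ and $\partial_\xi\hat\phi(0)=0$).

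The main obstacle is bookkeeping the borderline symbol $|\xi|^{a-1}$: for the commutator term $C_2$ it is harmless because the commutator only sees $|\xi|^{a-1}\psi\ha$ in $L^2$ (integrable singularity at $0$ since $2(a-1)>-1$ fails in general — careful: $2(a-1)\ge-1\iff a\ge1/2$, so for $a\in(0,1/2)$ we do \emph{not} have $|\xi|^{a-1}\in L^2_{loc}$ by itself). Here the point is that $|\xi|^{a-1}\psi\ha$ must instead be grouped as $|\xi|^{a}\psi\ha\cdot|\xi|^{-1}$ only away from the origin, and near the origin one writes $\ha(\xi)=\ha(0)+O(|\xi|)$; but since in the Theorem \ref{P2} setting we already know $\hat u(0,t)=0$ is \emph{not} yet available (Theorem \ref{P1} needs more regularity), the cleanest route is to absorb $|\xi|^{a-1}$ against $\ha\in H^{1/2+}$ using $\||\xi|^{a-1}\ha\chi\|\lesssim\||\xi|^{a-1}\chi\|_{L^p}\|\ha\|_{L^q}$ with Hölder and Sobolev embedding, noting $a-1>-1$ so $|\xi|^{a-1}\in L^p_{loc}$ for $p<1/(1-a)$, and $\ha\in L^q$ for all $q$ since $\phi\in\z_{2,2}\subset H^2$. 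This shows $C_2\in L^2$ after all. The genuine failure at $D_2$ and $D_6$ then comes not from $L^2$-membership of the pointwise product but from the non-integrability of the Stein derivative $\mathcal D^\gamma$ of the singular/non-smooth symbol near $0$, exactly as isolated by Propositions \ref{Dstein} and \ref{DsteinL2}; I would state this clearly and defer those two terms to the subsequent argument where the vanishing of $\ha(0)$ and $\partial_\xi\ha(0)$ is extracted.
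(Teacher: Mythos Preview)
There is a genuine gap in your treatment of $C_2$ and of the $\sgn(\xi)$ term. You assert that ``in the Theorem~\ref{P2} setting we already know $\hat u(0,t)=0$ is \emph{not} yet available (Theorem~\ref{P1} needs more regularity)'', but this is backwards: the hypotheses of Theorem~\ref{P2} give $u(t_j)\in\z_{5/2+a,5/2+a}\subset\z_{3/2+a,3/2+a}$ for three (hence two) times and $u\in C([0,T];\z_{2,2})\subset C([0,T];\z_{1,1})$, so Theorem~\ref{P1} applies and yields $\hat\phi(0)=\hat u(0,t)=0$ for all $t$. The paper uses this in an essential way. For $C_2$ one writes, via Taylor with $\ha(0)=0$, $\ha(\xi)=\xi\partial_\xi\ha(0)+\int_0^\xi(\xi-\sigma)\partial_\xi^2\ha(\sigma)\,d\sigma$, which cancels the $|\xi|^{a-1}$ singularity and gives $\||\xi|^{a-1}\psi\ha\|\lesssim\||\xi|^a e^{-t|\xi|^{1+a}}\|(|\partial_\xi\ha(0)|+\|\partial_\xi^2\ha\|_\infty)<\infty$ by \eqref{lemasigma}. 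Your proposed H\"older substitute fails arithmetically: with $1/p+1/q=1/2$ and $|\xi|^{a-1}\in L^p_{\mathrm{loc}}$ forcing $p<1/(1-a)$, one gets $1/q<1/2-(1-a)=a-1/2<0$ for $a\in(0,1/2)$, which is impossible. Without $\ha(0)=0$ the function $|\xi|^{a-1}\psi\ha$ is genuinely not in $L^2$ near the origin when $a<1/2$.

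There is also a mislabelling that hides a second gap: you describe the exception $D_6$ as having ``symbol $\sgn\xi$ multiplying $\ha$'', but that is $D_5$; the actual $D_6$ carries $|\xi|^a\sgn(\xi)\,\partial_\xi\ha$ (whose derivative $a|\xi|^{a-1}$ is unbounded, so it does \emph{not} fall under your $\|h'\|_\infty$ argument, and it is indeed one of the two exceptions). Conversely, $D_5$ is claimed to lie in $L^2$ and must be proved so; your $\|h'\|_\infty$ route fails since $\sgn(\xi)$ is not differentiable. The paper handles $D_5$ via the weighted Hilbert-transform bound (Lemma~\ref{boundhilbert}): using $\ha(0)=0$ one has $x\h\phi=\h(x\phi)$, hence $\|D_\xi^\gamma(\sgn(\xi)\ha)\|=\||x|^\gamma\h\phi\|=\||x|^{\gamma-1}\h(x\phi)\|\lesssim\||x|^\gamma\phi\|$ since $\gamma-1\in(-1/2,1/2)$. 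This again relies on $\ha(0)=0$, so the missing ingredient in your outline is precisely that observation.
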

\begin{proof}
First of all, we deal with the $C_2$ term.

Since $\ha(0)=0$, by the Taylor Formula
\begin{equation}
\ha(\xi)=\xi \p_\xi \ha(0) +\int_0^\xi (\xi-\sigma)\p_\xi^2\ha(\sigma)d\sigma,
\end{equation}
and Proposition \ref{Comu} we obtain
\begin{equation}
\begin{split}
\|C_2\|&\lesssim \|\chi\|_{H^2_\xi}\|t(1+a)a|\xi|^{a-1}\psi \ha\|\\
             &\lesssim_a t\Big(\||\xi|^a \p_\xi \ha(0)\psi\|+\||\xi|^{a-1}\psi\int_0^\xi (\xi-\sigma)\p_\xi^2\ha(\sigma)d\sigma \|\Big)\\
             &\lesssim_a t\Big(\||\xi|^a e^{-t|\xi|^{1+a}}\||\p_\xi \ha(0)|+\||\xi|^{a}e^{-t|\xi|^{1+a}}\|\|\p_\xi^2\ha\|_{L^\infty_\sigma}\Big)\\
             &\lesssim_a t^{\frac{1}{2(1+a)}}\|\phi\|_{H^{5/2+a}},
\end{split}
\end{equation}
where above we used \eqref{lemasigma} and Sobolev embedding.
To deal with the terms $C_i$, $i\not= 2$, we can proceed similarly to terms $A_i$, $i=1,...,3$, in the Claim \ref{claim1}.

About the $D_j$ terms, using \eqref{equiv}, \eqref{Leibh} and \eqref{psi1}

\begin{equation}\label{D1}
\begin{split}
\|D_1\|&\lesssim  \ \| \chi t^2 (1+a)^2|\xi|^{2a}\psi(\xi,t)\ha\|+\|\D(\chi t^2 (1+a)^2|\xi|^{2a}\psi(\xi,t)\ha)\|\\
       &\lesssim_a  \ t^2(\|\ha\|+\|\D(|\xi|^{2a}\psi(\xi,t)\chi\ha)\|)\\ 
       &\lesssim_a  \ (t^2+t^{\frac{2}{1+a}}+t^{\frac{2+a}{1+a}}+t^{\frac{3}{1+a}})\|\phi\|+t^{\frac{2}{1+a}}\||x|^\gamma \phi\|\\
&\lesssim_{t,a} \|\phi\|+\||x|^{1/2+a}\phi\|,
\end{split}
\end{equation}

\begin{equation}\label{D3}
\begin{split}
\|D_3\|&\lesssim_a t^2  \Big( \| |\xi|^{a+1}\psi(\xi,t)\ha\|+\|\D(|\xi|^{a+1}\psi(\xi,t)\ha)\|\Big)\\
       &\lesssim_a  \ (t+t^{\frac{1+2a}{1+a}}+t^{\frac{2+a}{1+a}})\|\phi\|+t\||x|^\gamma \phi\|\\
&\lesssim_{t,a} \|\phi\|+\||x|^{1/2+a}\phi\|,
\end{split}
\end{equation}

\begin{equation}\label{D4}
\begin{split}
\|D_4\|&\lesssim_a t^2  \Big( \| \xi^2\psi(\xi,t)\ha\|+\|\D(\xi^2\psi(\xi,t)\ha)\|\Big)\\
       &\lesssim_a  \ (t^{\frac{2a}{1+a}}+t^{\frac{1+2a}{1+a}}+t^{\frac{3a}{1+a}})\|\phi\|+t^{\frac{2a}{1+a}}\||x|^\gamma \phi\|\\
&\lesssim_{t,a} \|\phi\|+\||x|^{1/2+a}\phi\|,
\end{split}
\end{equation}

\begin{equation}\label{D7}
\begin{split}
\|D_7\|&\lesssim_a \ t \Big( \||\xi|\psi(\xi,t)\p_\xi \ha\|+\|\D(|\xi|\psi(\xi,t)\p_\xi \ha)\|\Big)\\
       &\lesssim_a  \ (t^{\frac{a}{1+a}}+t+t^{\frac{2a}{1+a}})\|\p_\xi\ha\|+t^{\frac{a}{1+a}}\|\mathcal{D}_\xi^\gamma \p_\xi\phi\|\\
       &\lesssim_{t,a} \|x\phi\|+\||x|^{3/2+a}\phi\|,
\end{split}
\end{equation}

\begin{equation}\label{D8}
\begin{split}
\|D_8\|&\lesssim_a \Big( \|\psi(\xi,t)\p_\xi^2 \ha\|+\|\D(\psi(\xi,t)\p_\xi^2 \ha)\|\Big)\\
       &\lesssim_{t,a}  \|x^2 \phi\|+\|\D \p_\xi^2 \ha\|\\
       &\lesssim_{t,a} \|\phi\|+\||x|^{5/2+a}\phi\|,
\end{split}
\end{equation}

and finally

\begin{equation}\label{D5}
\begin{split}
\|D_5\|&\lesssim_a \ t \Big( \|\sgn(\xi)\psi(\xi,t)\ha\|+\|\D(\sgn(\xi)\psi(\xi,t)\ha)\|\Big).\\
\end{split}
\end{equation}
The last term can be estimated by similar way to \eqref{C5}.

This completes the proof of Claim \ref{claim3}.

\end{proof}

By way analogous to the linear part, we can write

\begin{equation}\label{intc}
\begin{split}
\int_{0}^{t}[\chi;&D_{\xi}^{\gamma}]\Big(\psi(t-\tau,\xi)\big((t-\tau)^2 (1+a)^2|\xi|^{2a} -(t-\tau)(1+a)a|\xi|^{a-1}+\\
&+4i(t-\tau)^2 (1+a)|\xi|^{a+1}\sgn(\xi)- 4(t-\tau)^2 \xi^2 -2i(t-\tau) \sgn(\xi) \big)\hat{z}\\
&-\big( 2(t-\tau)(1+a)|\xi|^a \sgn(\xi)+4i(t-\tau)|\xi|\big)\p_\xi \hat{z} +\p_\xi^2 \hat{z}\Big)\\
&+D_{\xi}^{\gamma}\Big(\chi\big(\psi(t-\tau,\xi)\big((t-\tau)^2 (1+a)^2|\xi|^{2a} -(t-\tau)(1+a)a|\xi|^{a-1}+\\
&4i(t-\tau)^2 (1+a)|\xi|^{a+1}\sgn(\xi))- 4(t-\tau)^2 \xi^2 -2i(t-\tau) \sgn(\xi) \big)\hat{z}-\\
&-\big( 2(t-\tau)(1+a)|\xi|^a \sgn(\xi)+4i(t-\tau)|\xi|\big)\p_\xi \hat{z} +\p_\xi^2 \hat{z}\Big)d\tau\\
:=&\ \mathcal C_1+\ldots+ \mathcal C_8+\mathcal D_1+\ldots +\mathcal D_8.
\end{split}
\end{equation}
\begin{claim}\label{claim4}
The terms $\mathcal C_j,\mathcal D_j \in L^2$, for all $t\in [0,T]$, $j=1,...,8$, except $\mathcal D_2$ and $\mathcal D_6$.
\end{claim}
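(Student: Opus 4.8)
The plan is to treat Claim \ref{claim4} as the Duhamel counterpart of Claim \ref{claim3}, in the same way that Claim \ref{claim2} was the Duhamel counterpart of Claim \ref{claim1}. Each $\mathcal C_j$ (resp.\ $\mathcal D_j$) in \eqref{intc} has exactly the structure of $C_j$ (resp.\ $D_j$) in \eqref{C}--\eqref{D}, with $\hat\phi$ replaced by $\hat z(\tau)=\tfrac12\widehat{\p_x u^2}(\tau)=\tfrac{i}{2}\,\xi\,\widehat{u^2}(\xi,\tau)$, an extra time integral $\int_0^t(\cdot)\,d\tau$, and powers of $t-\tau$. So the first reduction is twofold: (i) one must know that $z(\tau)$ lies, for each $\tau\in(0,T]$, in the weighted Sobolev space in which $\phi$ was assumed to live in Claim \ref{claim3} (essentially $\lanx^{5/2+a}z(\tau)\in L^2$ together with the Sobolev regularity needed for the embeddings used on $\p_\xi^j\hat z(\tau)$), with a bound that is integrable in $\tau$ on $[0,t]$; (ii) the negative powers of $t-\tau$ generated by the $\psi(\xi,t-\tau)$-factors through \eqref{lambda}, \eqref{psi1} and \eqref{lemasigma} must be integrable near $\tau=t$.

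For (i) I would argue exactly as at the start of the proof of Claim \ref{claim2}: by \eqref{reg2}, $u\in C((0,T];H^\infty)$, and by Theorem \ref{well}(ii) together with the persistence already established, $u\in C([0,T];L^2_r)$ for every $r<5/2+a$. Writing $\p_x(\lanx^{5/2+a}u^2)=(5/2+a)\lanx^{1/2+a}(2x)u^2+\lanx^{5/2+a}\p_x u^2$, it suffices to control $\|J^m(\lanx^{5/2+a}u^2)\|$ for the relevant $m$; using $\lanx^{5/2+a}u^2=(\lanx^{5/4+a/2}u)^2$, the $H^s$-algebra property, and Proposition \ref{Jota} with $\nu=1$, $\delta=1/\beta$ and $\beta$ small, one bounds this by $\|J^{1/\beta}u\|^{\beta}\|\lanx u\|^{1-\beta}$, which by \eqref{reg1} is $\lesssim \tau^{-\lambda/(1+a)}$ with $\lambda/(1+a)<1$, hence in $L^1_\tau$; the Sobolev norms of $z(\tau)$ needed for the embeddings on $\p_\xi^j\hat z(\tau)$ are bounded in the same way.

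Granting (i)--(ii), each $\mathcal C_j$ with $j\ne 2$ and each $\mathcal D_j$ with $j\notin\{2,6\}$ is estimated by transcribing the corresponding bound for $C_j$, $D_j$ from Claim \ref{claim3} (via \eqref{Leibh}, \eqref{equiv}, \eqref{psi1}, \eqref{lambda}, \eqref{lemasigma}, Lemma \ref{boundhilbert}, and Propositions \ref{Dstein}--\ref{DsteinL2}) inside the $\tau$-integral, and then invoking (i)--(ii) to see that the integrand is in $L^1(0,t)$. The term $\mathcal C_2$ is simpler than in the linear case: since $\hat z=\tfrac{i}{2}\xi\widehat{u^2}$, the singular factor $|\xi|^{a-1}$ is absorbed, $|\xi|^{a-1}\hat z=\tfrac{i}{2}|\xi|^a\sgn(\xi)\widehat{u^2}$, and $\mathcal C_2$ is then bounded directly by Proposition \ref{Comu} (the commutator $[\chi;D_\xi^\gamma]$ loses no derivative), \eqref{lambda} and $\|u^2(\tau)\|$, integrated in $\tau$. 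Finally $\mathcal D_2$ and $\mathcal D_6$ are excluded for exactly the reason $D_2$ and $D_6$ were: absorbing $|\xi|^{a-1}$ as above and splitting off the zero-frequency value $\widehat{u^2}(0,\tau)=\int u^2(x,\tau)\,dx$, both terms collapse, modulo an $H^1_\xi$ remainder, onto a common multiple of $\int_0^t(\text{smooth in }\tau)\,\widehat{u^2}(0,\tau)\,D_\xi^\gamma\big(\chi|\xi|^a\sgn(\xi)\psi(\xi,t-\tau)\big)\,d\tau$, and since $\gamma=1/2+a$ the kernel $D_\xi^\gamma(\chi|\xi|^a\sgn(\xi)\psi(\xi,t-\tau))$ fails to lie in $L^2$ by \eqref{Dstein1}; these two terms are deliberately kept aside, to be combined later with $D_2,D_6$ and their $\chi$-commutators to force the vanishing that drives the unique continuation.

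The main obstacle is step (i): showing that $z(\tau)=\tfrac12\p_x u^2(\tau)$ belongs to $L^2(\lanx^{5/2+a}dx)$ \emph{with the right Sobolev regularity and with a bound integrable in $\tau$ near $0$}. This forces the interplay of the smoothing estimate \eqref{reg1}, the persistence from Theorem \ref{well}(ii), and Proposition \ref{Jota}, together with bookkeeping of exactly which weighted-Sobolev index each $\mathcal C_j$, $\mathcal D_j$ needs and of the fact that the weight distributes cleanly across $u\cdot u$ via $u^2=(\lanx^{5/4+a/2}u)^2$. Once this is in place, the rest is a line-by-line copy of Claim \ref{claim3} inside a convergent time integral.
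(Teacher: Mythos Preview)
Your approach coincides with the paper's: reduce to showing $z(\tau)\in L^2(\lanx^{5/2+a}dx)$ for each $\tau\in(0,T]$ and then transcribe the bounds of Claim \ref{claim3} with $\phi$ replaced by $z(\tau)$ inside the time integral. The one concrete slip is your choice $\nu=1$ in Proposition \ref{Jota}: since $(1-\beta)\nu$ must equal $5/4+a/2>1$, taking $\nu=1$ would force $\beta<0$; the paper instead takes $\nu=3/2$ (which is available because $u\in C([0,T];L^2_{3/2})$ by Theorem \ref{well}(ii)), and then $\beta\in(0,1)$ for $a\in(0,1/2)$. Your treatment of $\mathcal C_2$ by writing $|\xi|^{a-1}\hat z=\tfrac{i}{2}|\xi|^a\sgn(\xi)\widehat{u^2}$ is a legitimate shortcut, and the $\tau$-integrability bookkeeping you describe is more than the paper actually spells out.
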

\begin{proof} Similar to proof of Claim \ref{claim2}. In fact, looking at the proof of Claim \ref{claim3} we see that we only used that $\phi \in L^2(\lan^{5/2+a}dx)$. Then its enough to show that $z=uu_x \in L^2(\lan^{5/2+a}dx)$, for all $t\in (0,T]$.
We observe that
\begin{equation}
\p_x (\lan^{5/2+a}u^2)=(5/2+a)\lan^{3/2+a}2x u^2+\lan^{5/2+a}\p_x u^2.
\end{equation}
Therefore we will estimate the left-hand side of the last inequality
\begin{equation}
\begin{split}\label{ineq3}
\|J(\lan^{5/2+a}u^2)\|&=\|J((\lan^{5/4+a/2}u)^2)\|\\
                      &\lesssim \|J(\lan^{5/4+a/2}u)\|^2\\
                      &\lesssim \|J^{\frac{1}{\beta}}u\|^\beta \|\lan^{3/2}u\|^{1-\beta}.
\end{split}
\end{equation}
 In \eqref{ineq3} it was used $u\in C((0,T];H^\infty)$ and Lemma \ref{inter1} with $\nu=3/2$, $\delta=\frac{1}{\beta}$ and $\beta=\frac{1-2a}{4}$.

This finishes the proof of Claim \ref{claim4}.

\end{proof}




Since $\ha(0)=0$, using the Taylor's formula 

\begin{equation}
\begin{split}\label{taylor}
\ha(\xi)&=\xi \p_\xi \ha(0)+\int_0^\xi (\xi-\sigma)\p_\xi^2 \ha(\sigma)d\sigma,\\
\end{split}
\end{equation}
we obtain

\begin{equation}
\begin{split}
D_{2}=& \ t c_a \D \big(|\xi|^a \sgn(\xi)\psi \chi \p_\xi \ha(0)\big)+\\
&+ c_a\D \big(\underbrace{t|\xi|^{a-1} \sgn(\xi)\psi \chi \int_0^\xi (\xi-\sigma)\p_\xi^2 \ha(\sigma)d\sigma}_{F}\big)\\
:=& \ \bar{D}_2+R(\xi),
\end{split}
\end{equation}
hence $R\in H^1_\xi$, in fact

\begin{equation}
\begin{split}
\|F\|
&\lesssim_a  t\||\xi|^{a-1}\chi\xi^2\|\|\p_\xi^2 \ha\|_\infty\\
     &\lesssim_{a,t} \|J_\xi^{1/2+a} \p_\xi^2 \ha\|\\
     &\lesssim_{a,t} \|\lan^{5/2+a}\phi\|,
\end{split}
\end{equation}
and
\begin{equation}
\begin{split}
\|\p_\xi F\|\lesssim& \ t \Big(\||\xi|^{a-2}\chi \xi^2\|\|\p_\xi^2 \ha\|_\infty+\||\xi|^{a-1}\p_\xi \psi \xi^2 \chi\|\|\p_\xi^2 \ha\|_\infty+\\
           &+\||\xi|^{a-1}\psi \chi' \xi^2\|\|\p_\xi^2 \ha\|_\infty+\||\xi|^{a-1}\chi \int_0^\xi \p_\xi^2 \ha(\sigma)d\sigma\|\Big)\\
           \lesssim_{a,t}& \ \|\lan^{5/2+a}\phi\|.
\end{split}
\end{equation}
We can also write

\begin{equation}
\begin{split}
\bar{D}_2&=tc_a \D \Big(|\xi|^a \sgn(\xi)\chi \p_\xi \ha(0)(\psi-1+1)\Big)\\
          &=\bar{D}_{2,1}+\tilde{D}_2,
\end{split}
\end{equation}
where $\bar{D}_{2,1} \in L^2_\xi$.

In fact, since $$\p_\xi \psi(\xi,t)=-\Big(t(1+a)|\xi|^a \sgn(\xi)+2it|\xi|\Big)\psi,$$
we obtain
 
\begin{equation}
\begin{split}
\||\xi|^a \sgn(\xi)\chi \p_\xi \ha(0)(\psi-1)\|\lesssim_t\ |\p_\xi \ha(0)|\||\xi|^{a}\chi\|,\\
\end{split}
\end{equation}
and
\begin{equation}
\begin{split}
\|\p_\xi \big(|\xi|^a \sgn(\xi)\chi \p_\xi \ha(0)(\psi-1)\big)\|\lesssim_a & \  |\p_\xi \ha(0)|\Big( \||\xi|^a \chi \frac{\psi-1}{\xi}\|+\||\xi|^{a} \chi' \|+\\
&+\||\xi|^a \chi \p_\xi \psi\|\Big)\\
\lesssim_{a,t} & \ |\p_\xi \ha(0)|\big(\||\xi|^a \chi\|+\|(|\xi|^{2a}+|\xi|^{1+a})\chi\|+\\
&+\||\xi|^{a}\chi'\|\big).
\end{split}
\end{equation}

For the $D_6$ term we estimate as follows

\begin{equation}
\begin{split}
D_{6}&=-\D \Big (2t(1+a)|\xi|^a \sgn(\xi) \chi \p_\xi \ha (1+\psi-1)\Big)\\
        &:=D_{6,1}+D_{6,2},
\end{split}
\end{equation}
where $D_{6,2}\in L^2_\xi$, by the Lemma \ref{Leibnitz}.

Thus

\begin{equation}
\begin{split}
D_{6,1}&= -\D \Big(2t(1+a)|\xi|^a \sgn(\xi)\chi \p_\xi \ha(\xi)\Big)\\
          &=-2t(1+a)\D \Big(|\xi|^a \sgn(\xi)\chi \big(\p_\xi \ha(\xi)-\p_\xi \ha(0)\big)+|\xi|^a \sgn(\xi) \chi \p_\xi \ha(0)\Big)\\
          &:= \bar{D}_6+\frac{2}{a}\tilde{D}_2,
\end{split}
\end{equation}
where $\bar{D}_6 \in L^2_\xi$.

The last inequalities leads to 
\begin{equation}
\begin{split}\label{phi}
D_2+D_6=&\bar D_{2,1}+R+\bar D_6+D_{6,2}\\
&-(2+a)(1+a)t\mathcal{D}_\xi^{\gamma}(|\xi|^a \sgn(\xi) \chi \p_\xi \ha(0)).
\end{split}
\end{equation}
For the integral terms, by similar way we obtain
\begin{equation}
\begin{split}\label{int}
\mathcal D_2+\mathcal D_6=&\bar {\mathcal D}_{2,1}+\mathcal R+\bar {\mathcal D}_6+\mathcal D_{6,2}\\
&-(2+a)(1+a)\int_0^t (t-\tau)\mathcal{D}_\xi^{\gamma}(|\xi|^a \sgn(\xi)\chi \p_\xi \hat{z}(0,\tau))d\tau.
\end{split}
\end{equation}

From our hypotheses, Claim \eqref{claim3}, Claim \eqref{claim4}, \eqref{phi} and \eqref{int} follows that

\begin{equation}\label{equiv1}
D_\xi^{\gamma}\Bigg (|\xi|^{a}\sgn(\xi)\chi \Big( t\p_\xi \hat{\phi}(0)
 -\int_0^t (t-\tau)\p_\xi \hat{z}(0,\tau)\Big)d\tau \Bigg)\in L^2
\end{equation}
if and only if
\begin{equation}
D_{\xi}^{\gamma}\p_\xi^2 \hat{u}(\cdot,t)\in L^2 (\R).
\end{equation}

Also, from \eqref{consdata1}
\begin{equation}
\begin{split}\label{conser2}
\p_\xi \hat{z}(0,\tau)&=-i\widehat{xz}(0,\tau)\\
&=-\frac{i}{2}\int x \p_x u^2 (x,\tau)dx\\
&=\frac{i}{2}\|u(\tau)\|^2\\
                                                         &=i\frac{d}{d\tau}\int x u(x,\tau)dx.
\end{split}
\end{equation}

Using integrating by parts
\begin{equation}
\begin{split}\label{equiv2}
 t\p_\xi \ha(0) - \int_0^t (t-\tau)\partial_{\xi}\hat{z}(0,\tau)d\tau=& \  t\p_\xi \ha(0) - i\int_{0}^{t}(t-\tau)\frac{d}{d\tau}\int x u(x,\tau)dxd\tau\\
=& -i t \int x\phi(x)dx-i(t-\tau)\int xu(x,\tau)dx\Big |_{\tau=0}^{\tau=t} -\\
&- i\int_{0}^{t}\int x u(x,\tau)dxd\tau\\
=&\ -i t \int x\phi(x)dx +i t \int x\phi(x)dx-\\
&-i\int_{0}^{t}\int x u(x,\tau)dxd\tau\\
&=-i\int_{0}^{t}\int x u(x,\tau)dxd\tau.
\end{split}
\end{equation}
Putting $t=t_2$, \eqref{equiv1} and \eqref{equiv2} implies that 

\begin{equation}
\begin{split}
D_\xi^{\gamma}(|\xi|^{a}\sgn(\xi)\chi)\int_{0}^{t_2}\int x u(x,\tau)dxd\tau \in L^2,
\end{split}
\end{equation}
 by Stein derivative
\begin{equation}
\begin{split}
\mathcal{D}_\xi^{\gamma}(|\xi|^{a}\sgn(\xi)\chi)\int_{0}^{t_2}\int x u(x,\tau)dxd\tau \in L^2.
\end{split}
\end{equation}
Hence, in view of $\gamma=1/2+a$, from \ref{Dstein1} we see that
\begin{equation}
\int_{0}^{t_2}\int x u(x,\tau)dxd\tau=0.
\end{equation}
 By  Rolle's lemma, there exists $\tau_1\in (0,t_2)$ such that
\begin{equation}\label{rolle1}
\int xu(x,\tau_1)dx=0.
\end{equation}
Analogously, using that \ $u(t_2),u(t_3)\in \z_{5/2+a,5/2+a}$ \ we can show that
exists $\tau_2 \in (t_2,t_3)$ such that
\begin{equation}\label{rolle2}
\int xu(x,\tau_2)dx=0.
\end{equation}
Finally from \eqref{rolle1}, \eqref{rolle2} and identity \eqref{consdata1} we obtain $u(t)=0$, for all $t \in [\tau_1,\tau_2]$. In view of $\|u(t)\|$ is decreasing in $t$, we conclude $$u(t)=0, \ \mbox{\for all} \ t\geq \tau_1=\bar{t}.$$

The case $a\in [1/2,1)$ can be deal by choosing $5/2+a=3+\gamma$, where $\gamma=a-1/2$, using the derivative $\p_\xi^3(\psi(\xi,t)\ha(\xi))$ and observing that by \eqref{Dstein2}
$$\mathcal{D}_\xi^\gamma (|\xi|^{a-1}\chi(\xi))\notin L^2(\R).$$
\end{proof}

\bibliographystyle{mrl}

\end{document}